\title{Displaying Polish groups on separable Banach spaces}
\author{Valentin Ferenczi  and Christian Rosendal }
\address{Instituto de Matem\'atica e Estat\'istica \\
 Universidade de S\~ao Paulo \\
rua do Mat\~ao 1010 \\
Cidade Universit\'aria \\
05508-90 S\~ao Paulo, SP \\
Brazil.}
\email{ferenczi@ime.usp.br}
\address{Department of Mathematics, Statistics, and Computer Science (M/C 249)\\
University of Illinois at Chicago\\
851 S. Morgan St.\\
Chicago, IL 60607-7045\\
USA}
\email{rosendal@math.uic.edu}
\urladdr{http://www.math.uic.edu/$~$rosendal}
\date{}
\newcommand{\norm}[1]{\lVert#1\rVert}
\newcommand{\Norm}[1]{\big\lVert#1\big\rVert}
\newcommand{\NORM}[1]{\Big\lVert#1\Big\rVert}
\newcommand{\triple}[1]{|\!|\!|#1|\!|\!|}
\newcommand{\Triple}[1]{\big|\!\big|\!\big|#1\big|\!\big|\!\big|}
\newcommand {\1}{\mathbb 1}
\newcommand {\N}{\mathbb N}
\newcommand {\Q}{\mathbb Q}
\newcommand {\R}{\mathbb R}
\newcommand {\Z}{\mathbb Z}
\newcommand {\C}{\mathbb C}
\newcommand {\U}{\mathbb U}
\newcommand {\T}{\mathbb T}
\newcommand{\eps}{\epsilon}
\newcommand{\isom}{\simeq}
\newcommand{\tom} {\emptyset}
\newcommand{\saa}{\Rightarrow}
\newcommand{\equi}{\Longleftrightarrow}
\newcommand{\til}{\rightarrow}
\newcommand {\del}{ \; \big| \;}
\newcommand{\inv}{^{-1}}
\newcommand {\e} {\exists}
\renewcommand {\a} {\forall}
\newtheorem{thm}{Theorem}
\newtheorem{cor}[thm]{Corollary}
\newtheorem{lemme}[thm]{Lemma}
\newtheorem{prop} [thm] {Proposition}
\newtheorem{defi} [thm] {Definition}
\newtheorem{obs}[thm] {Observation}
\newtheorem{quest}[thm]{Question}
\newtheorem{ex}[thm]{Example}
\newcommand{\Id}{{\rm Id}}
\thanks{V. Ferenczi acknowledges the support of CNPq, project 452068/2010-0, and FAPESP, projects 2010/05182-1 and 2010/17493-1. C. Rosendal was supported by NSF grant DMS 0901405}
\begin{document}
\subjclass[2000]{Primary: 20E08, Secondary: 03E15}

\keywords{Polish groups, Isometries of Banach spaces, Linear representations, Renormings of Banach spaces}

\begin{abstract} A display of a topological group $G$ on a Banach space $X$ is a topological isomorphism of $G$ with the isometry group ${\rm Isom}(X,\triple\cdot)$ for some equivalent norm $\triple\cdot$ on $X$, where the latter group is equipped with the strong operator topology.

Displays of Polish groups on separable real spaces are studied. It is proved that any closed subgroup of the infinite symmetric group $S_\infty$ containing a non-trivial central involution admits a display on any of the classical spaces $c_0$, $C([0,1])$, $\ell_p$ and $L_p$ for $1 \leqslant p <\infty$. Also, for any Polish group $G$, there exists a separable space $X$ on which  $\{-1,1\} \times G$ has a display. 

\end{abstract}
\maketitle

\tableofcontents

\section{Introduction}

The general objective of this paper is to determine, given a topological group $G$ and a Banach space $X$, whether $G$ is isomorphic to the group ${\rm Isom}(X,{\triple{\cdot}})$ of isometries on $X$ equipped with the strong operator topology, under some adequate choice of equivalent norm ${\triple{\cdot}}$ on $X$. Recall that the {\em strong operator topology} (or \textsc{\textsc{sot}}), i.e., the topology of pointwise convergence on $X$, is a group topology on ${\rm Isom}(X)$, meaning that the group operations are continuous. Moreover, when $X$ is separable, ${\rm Isom}(X)$ is a {\em Polish} group with respect to this topology, that is, it is separable and the topology can be induced by a complete metric on ${\rm Isom}(X)$.
General results about isometry groups as Polish groups may be found in \cite{kec}.

The first important general results in the direction of this paper are due to K. Jarosz \cite{J}, who, improving earlier results by S. Bellenot \cite{Bel}, proved that every real or complex Banach space may be renormed so that the only isometries are scalar multiples of the identity. Therefore, no information can be deduced from the fact that $X$ has only trivial isometries about the isomorphic structure of a Banach space $X$, i.e., the structure invariant under equivalent renormings, such as, for example, super-reflexivity as opposed to uniform convexity. 

Since our subject matter in many ways touches the classical topic of linear representations, we need a few definitions to dispel any possible confusions in terminology.

By a  {\em (linear) representation} of a topological group $G$ on a Banach space $X$ we understand a homomorphism $\rho\colon G\til GL(X)$ such that for any $x\in X$, the map $g\in G\mapsto \rho(g)(x)\in X$ is continuous, or, equivalently, such that $\rho$ is continuous with respect to the strong operator topology on $GL(X)$. The representation is {\em faithful} if it is injective and {\em topologically faithful} if $\rho$ is a topological isomorphism between $G$ and its image $\rho(G)$.

The even stronger concept that we shall be studying here is given by the following definition.

\begin{defi}  
A {\em display} of a topological group $G$ on a Banach space $X$ is a topologically faithful linear representation $\rho \colon G\til GL(X)$ such that $\rho(G)={\rm Isom}(X,\triple\cdot)$ for some equivalent norm $\triple\cdot $ on $X$. 
We say that  $G$ is {\em displayable} on $X$ whenever there exists a display of $G$ on $X$. 
\end{defi}

Using this terminology, the result of Jarosz simply states that the group $\{-1,1\}$ is displayable on any real Banach space, while the unit circle $\T$ is displayable on any complex space.
It is then quite natural to ask  when a more general group $G$ is displayable on a  Banach space $X$, and, more specifically, if some information on the isomorphic structure of a space $X$ may be recovered from which groups $G$ are displayable on it.
In \cite{st} , J. Stern observed that a necessary condition for $G$ being displayable on a real Banach space is that $G$ contains a normal subgroup with $2$ elements, corresponding to the subgroup $\{\Id,-\Id\}$ of ${\rm Isom(X)}$ - or equivalently, that the center of $G$ contains a non-trivial involution, corresponding to $-\Id$. On the other hand, K. Jarosz \cite{J} conjectured  that for any discrete group $G$ and any real Banach space $X$ such that $\dim X \geqslant |G|$, the group $\{-1,1\} \times G$ is abstractly isomorphic to ${\rm Isom}(X,\triple\cdot)$ for some equivalent renorming $\triple\cdot$ of $X$.
A counter-example to this is the group ${\rm Homeo}_+(\R)$ of all increasing homeomorphisms of the unit interval equipped with the discrete topology. While it has cardinality equal to that of the continuum, it is shown in \cite{rossol} (see Corollary 9) that it admits no non-trivial linear representation on a separable reflexive Banach space and, in particular, cannot be the isometry group of such a space. Another and perhaps more striking example is the construction in \cite{FR} of a separable, infinite-dimensional, reflexive, real Banach space $X$ such that for any equivalent norm on $X$, one can decompose $X$ into an isometry invariant direct sum $X=H\oplus F$, where any isometry of $X$ acts by scalar multiplication on $H$. It follows that the isometry group is a compact Lie group under any equivalent renorming and, in particular, is never countably infinite. So not even the condition ${\rm dens}(X) \geqslant |G|$ can guarantee that
$\{-1,1\} \times G$ is isomorphic to ${\rm Isom}(X,\triple\cdot)$ for some equivalent norm $\triple\cdot$.

In a more positive direction, partial answers to Jarosz' conjecture were obtained by the first author and E. Galego in \cite{FG}. There it was shown that for any finite group $G$, the group $\{-1,1\} \times G$ is displayable on any  separable real Banach space $X$ for which $\dim X \geqslant |G|$ and it was also noted that it is not possible to generalize this result to all finite groups with a non-trivial central involution. On the other hand,  any countable discrete group $G$ containing a non-trivial central involution  is displayable on any of the spaces $c_0$,  $C([0,1])$, $\ell_p$ and $L_p$, $1 \leqslant p<\infty$.  

In this paper, we shall generalize results of \cite{FG} to certain uncountable Polish groups $G$, provided $G$ admits a topologically faithful linear representation on a space $X$ inducing a sufficient number of countable orbits on $X$. Among our applications, we deduce that any closed subgroup of the group $S_\infty$ of all permutations of $\N$ is displayable on any of the classical spaces $c_0$, $C([0,1])$, $\ell_p$ and $L_p$, $1 \leqslant p<\infty$, provided it fulfills the necessary condition of having a non-trivial central involution.
Therefore, one cannot distinguish these spaces by the countable discrete or even just closed subgroups of $S_\infty$ displayable on them.

We shall also address and solve the question of the displayability of two well-known uncountable groups, namely, the group of isometries and the group of rotations of the $2$-dimensional Hilbert space. 
Moreover, in a general direction, we show that for any Polish group $G$, the group $\{-1,1\} \times G$ is displayable on some separable real Banach space explicitly constructed from $G$.

Finally, we shall make some observations about the relation between so-called LUR renormings, a crucial tool for our main result, and transitivity and maximality of norms.

\section{Displays: necessary conditions}

In what follows,
the closed unit ball of a Banach space $X$ will be denoted by $B_X$ and the unit sphere by $S_X$.
The space of continuous linear operators on $X$ will be denoted by $L(X)$ and the group of linear automorphisms on $X$ by $GL(X)$.
Furthermore, the group ${\rm Isom}(X)$ of surjective linear isometries on a Banach space $X$ is a closed subset of $GL(X)$ for the strong operator topology and, when $X$ is separable, the unit ball of $(L(X),{\norm{\cdot}})$ is a Polish space for \textsc{sot}. In this case, it follows that the group of isometries on $X$ is a Polish topological group with respect to \textsc{sot}.

Let now $X$ be a  Banach space and assume $G$ is a group of isometries or automorphisms on $X$. We wish to determine whether $X$ admits an equivalent norm for which the group of isometries is $G$, or, in other words, whether the canonical injection of $G$ into $GL(X)$ is a display.  If $G$ is a bounded group of automorphisms, meaning that $\sup_{T\in G}\norm{T}<\infty$, it is always possible to renorm $X$ to make these automorphisms into isometries. Namely,  just define the new norm by $\triple{x}=\sup_{T \in G}\|Tx\|$. So, in what follows, we shall often assume that $G$ is represented as a group of isometries on $X$ and then ask whether this representation is actually a display of $G$ on $X$. Two necessary conditions for this were considered in \cite{FG}, and we will here use the concept of convex transitivity, inspired by a definition of A. Pe\l czy\'nski and  S. Rolewicz \cite{R}, to add a third necessary condition.

\begin{defi} Let $(X,{\norm{\cdot}})$ be a real Banach space and $G$ be a subgroup of ${\rm Isom}(X)$. We shall say that $G$ acts convex transitively on $X$ if, for any $x \in S_X$, the closed unit ball of $X$ is the closed convex hull of the orbit $Gx$. 
\end{defi}

We have the following elementary reformulation.

\begin{lemme} \label{conv trans}
Let $(X,{\norm{\cdot}})$ be a real Banach space and $G$ be a subgroup of ${\rm Isom}(X)$. Then $G$ acts convex transitively on $X$ if and only if for all $x \in S_X$ and $x^* \in S_{X^*}$,
$$
\sup_{T \in G}\;x^*(Tx) =1.
$$
\end{lemme}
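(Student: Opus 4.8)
The plan is to unwind both sides of the equivalence using the bipolar theorem / Hahn--Banach separation. First I would recall that for any subset $A \subseteq X$, the closed convex hull $\overline{\rm conv}(A)$ (when $A$ is symmetric and bounded) is exactly $\{x \in X : x^*(x) \leqslant \sup_{a \in A} x^*(a) \text{ for all } x^* \in X^*\}$. Applied to $A = Gx$ for a fixed $x \in S_X$: since $G$ is a group of isometries, $Gx$ is a symmetric subset of $S_X$, so $\overline{\rm conv}(Gx) \subseteq B_X$ always holds, and the content of convex transitivity is the reverse inclusion $B_X \subseteq \overline{\rm conv}(Gx)$.

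Next I would translate ``$B_X \subseteq \overline{\rm conv}(Gx)$'' via the above description: this inclusion holds if and only if every $y \in B_X$ satisfies $x^*(y) \leqslant \sup_{T \in G} x^*(Tx)$ for all $x^* \in X^*$, which by taking the supremum over $y \in B_X$ on the left is equivalent to $\norm{x^*} \leqslant \sup_{T \in G} x^*(Tx)$ for all $x^* \in X^*$. Since each $Tx \in S_X$, we automatically have $\sup_{T \in G} x^*(Tx) \leqslant \norm{x^*}$, so the condition becomes $\sup_{T \in G} x^*(Tx) = \norm{x^*}$ for all $x^* \in X^*$. Homogeneity in $x^*$ then lets us restrict to $x^* \in S_{X^*}$, giving exactly the displayed identity. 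Running the argument in the other direction (from the displayed identity back to $B_X = \overline{\rm conv}(Gx)$ for each $x \in S_X$) is the same chain of equivalences read backwards.

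The only genuine point requiring care — and the closest thing to an obstacle — is the appeal to separation: one must check that $\overline{\rm conv}(Gx)$ is weakly closed (it is, being convex and norm-closed) so that a point $y \notin \overline{\rm conv}(Gx)$ can be strictly separated from it by some $x^* \in X^*$, yielding $x^*(y) > \sup_{T\in G} x^*(Tx)$; this is precisely what makes the bipolar description valid. Everything else is routine normalization and the trivial bound $|x^*(Tx)| \leqslant \norm{x^*}$. I expect the whole proof to be a short paragraph once the separation step is invoked.
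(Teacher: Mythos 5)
Your proposal is correct and is essentially the paper's argument: both directions come down to Hahn--Banach separation of a point from the norm-closed (hence weakly closed) convex set $\overline{\rm conv}\,Gx$, which you merely package as the standard half-space description of the closed convex hull. One small inaccuracy: $Gx$ need not be symmetric (e.g.\ $G=\{\Id\}$), but this is harmless since the description $\overline{\rm conv}(A)=\{y: x^*(y)\leqslant \sup_{a\in A}x^*(a)\ \text{for all } x^*\}$ holds for any nonempty bounded set and the inclusion $\overline{\rm conv}\,Gx\subseteq B_X$ only needs $Gx\subseteq B_X$.
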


\begin{proof} Note that, if $G$ does not act convex transitively on $X$, then we may find $x,y \in S_X$ such that $y \notin \overline{\rm conv}\ Gx$. Applying the Hahn-Banach Theorem, we then obtain a normalized  functional $x^*$ such that $\sup_{T \in G} x^*(Tx) < x^*(y) \leqslant 1$. Conversely, if $\sup_{T \in G}x^*(Tx)<1$ for some normalized $x\in X$ and $x^*\in X^*$, then
$\sup\;\{x^*(y):  y \in {\rm conv}\ Gx\}=\delta<1$, and therefore, if $y$ is some normalized vector such that $x^*(y)>\delta$, then $y$ does not belong to the closed convex hull of $Gx$. So the action of $G$ is not convex transitive.
\end{proof}

A typical example of a proper subgroup $G$ of ${\rm Isom}(X)$, that acts convex transitively (and even transitively) on $X$, is the group of rotations in the $2$-dimensional euclidean space.

\begin{prop}[Necessary conditions]\label{necessary} Let $X$ be a real Banach space and $G$ be a proper subgroup of ${\rm Isom}(X,\norm\cdot)$, which is the group of isometries on $X$ in some equivalent norm $\triple\cdot$. Then
\begin{itemize}
\item[(i)] $G$ contains $-\Id$,
\item[(ii)] $G$ is closed,
\item[(iii)] there exist $x \in S^{\norm\cdot}_X$, $x^* \in S^{\norm\cdot}_{X^*}$ such that $\sup_{T \in G}x^*(Tx)<1$.
\end{itemize}
\end{prop}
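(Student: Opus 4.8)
The plan is to verify each of the three conditions separately, in each case exploiting that $G = {\rm Isom}(X,\triple\cdot)$ for the equivalent norm $\triple\cdot$ while $G$ is at the same time a \emph{proper} subgroup of ${\rm Isom}(X,\norm\cdot)$.

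For (i): since $\triple\cdot$ is a norm, $-\Id$ is always an isometry of $(X,\triple\cdot)$, so $-\Id \in {\rm Isom}(X,\triple\cdot) = G$. This is immediate and requires no work beyond noting that norms are symmetric.

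For (ii): as recalled in the discussion preceding the proposition, when $X$ is separable ${\rm Isom}(X,\triple\cdot)$ is closed in $GL(X)$ for \textsc{sot}; but more elementarily, for \emph{any} Banach space and any equivalent norm, the set of $\triple\cdot$-isometries is \textsc{sot}-closed in the \textsc{sot}-closed unit ball of $(L(X),\triple\cdot)$ — a pointwise limit of surjective $\triple\cdot$-isometries is a $\triple\cdot$-isometry, and surjectivity survives because the limit has a two-sided inverse obtained as the limit of the inverses (which themselves are $\triple\cdot$-isometries and hence uniformly bounded). Since the $\norm\cdot$- and $\triple\cdot$-topologies on $X$ coincide, \textsc{sot} for $(L(X),\triple\cdot)$ and \textsc{sot} for $(L(X),\norm\cdot)$ agree, so $G$ is \textsc{sot}-closed in $L(X)$, a fortiori closed in ${\rm Isom}(X,\norm\cdot)$.

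For (iii), which I expect to be the only substantive point: I want to produce a normalized pair $x \in S_X^{\norm\cdot}$, $x^* \in S_{X^*}^{\norm\cdot}$ with $\sup_{T\in G} x^*(Tx) < 1$. By Lemma~\ref{conv trans} (applied to the norm $\norm\cdot$), it suffices to show that $G$ does \emph{not} act convex transitively on $(X,\norm\cdot)$. Suppose toward a contradiction that it does: then for every $y \in S_X^{\norm\cdot}$ we have $B_X^{\norm\cdot} = \overline{\rm conv}\, Gy$. Now compare with the norm $\triple\cdot$, of which $G$ is the \emph{full} isometry group. The unit ball $B^{\triple\cdot}_X$ is a $G$-invariant, closed, bounded, convex, symmetric set with nonempty interior; pick any $y$ in its $\triple\cdot$-sphere and normalize it in $\norm\cdot$, i.e. consider $y/\norm y$. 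Since $B^{\triple\cdot}_X$ contains the orbit $G\,(y/\norm y)$ up to the positive scalar $\norm y$ and is closed and convex, convex transitivity of $G$ on $(X,\norm\cdot)$ forces $B^{\norm\cdot}_X$ to be contained in a scalar multiple of $B^{\triple\cdot}_X$; running the same argument with the roles reversed (using a $\norm\cdot$-normalized vector and the convexity/closedness of $B_X^{\norm\cdot}$, together with the fact that $B_X^{\triple\cdot}$ is also the closed convex hull of a $G$-orbit once we know $G$ acts convex transitively for \emph{both} norms — which follows because convex transitivity for $\norm\cdot$ together with $G$-invariance pins down the ball up to scaling) yields that $B^{\triple\cdot}_X$ and $B^{\norm\cdot}_X$ are positive scalar multiples of each other, hence $\triple\cdot = \lambda\norm\cdot$ for some $\lambda>0$. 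But then ${\rm Isom}(X,\triple\cdot) = {\rm Isom}(X,\norm\cdot)$, contradicting that $G$ is a \emph{proper} subgroup. Therefore $G$ is not convex transitive on $(X,\norm\cdot)$, and Lemma~\ref{conv trans} delivers the desired $x, x^*$.

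The main obstacle is making the scaling argument in (iii) fully rigorous: the clean statement one wants is that \emph{if a group $G \leqslant {\rm Isom}(X,\norm\cdot)$ acts convex transitively, then the only $G$-invariant equivalent norms are the scalar multiples of $\norm\cdot$}. I would isolate this as a short lemma: given a $G$-invariant equivalent norm $\triple\cdot$, fix $x\in S_X^{\norm\cdot}$; by convex transitivity $B_X^{\norm\cdot} = \overline{\rm conv}\,Gx$, and since $\triple{Tx} = \triple x$ for all $T\in G$ and $\triple\cdot$ is a continuous convex function, $\sup_{z\in B_X^{\norm\cdot}} \triple z = \sup_{T\in G}\triple{Tx} = \triple x$; thus $\triple\cdot$ is constant on $S_X^{\norm\cdot}$, which exactly says $\triple\cdot = \triple x \cdot \norm\cdot$. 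Once this lemma is in hand, (iii) is immediate: if $G$ were convex transitive on $(X,\norm\cdot)$ then $\triple\cdot$, being $G$-invariant, would be a scalar multiple of $\norm\cdot$, forcing ${\rm Isom}(X,\triple\cdot) = {\rm Isom}(X,\norm\cdot) \supsetneq G$, a contradiction.
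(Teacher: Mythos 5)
Your proposal is correct and, once the convoluted first pass at (iii) is discarded in favour of the clean lemma you isolate at the end, it is essentially the paper's own argument: the paper likewise reduces (iii) via Lemma~\ref{conv trans} to showing that convex transitivity of $G$ on $(X,\norm\cdot)$ forces the $G$-invariant norm $\triple\cdot$ to be constant on $S_X^{\norm\cdot}$ (Cowie's argument), so that $\triple\cdot$ is a multiple of $\norm\cdot$ and ${\rm Isom}(X,\triple\cdot)={\rm Isom}(X,\norm\cdot)$, contradicting properness. Parts (i) and (ii) are simply declared obvious in the paper, and your justifications of them are fine.
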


\begin{proof} The first two conditions are obvious. For the third condition, we shall imitate E. Cowie's proof \cite{C} to the effect that convex transitive norms are uniquely maximal. So suppose towards a contradiction that (iii) fails, which by Lemma \ref{conv trans} means that $G$ acts convex transitively on $(X,\norm\cdot)$, and assume, without loss of generality, that $\triple{\cdot} \leqslant {\norm{\cdot}}$.
If $x$ has ${\norm{\cdot}}$-norm $1$, then by convex transitivity, 
$$
\overline{\rm conv}\ Gx=B^{\norm\cdot}_X.
$$
Given $y \in S^{\norm\cdot}_X$ and $\epsilon>0$, there exists $U_1,\ldots,U_n$ in $G$ and a convex combination $\sum_i \lambda_i U_i$ such that
$$
\Norm{y-\sum_i \lambda_i U_i(x)} \leqslant \epsilon,
$$
and hence 
$$
\triple{y} \leqslant \Triple{y-\sum_i \lambda_i U_i(x)}+\Triple{\sum_i \lambda_i U_i(x)}
$$
$$
\leqslant \Norm{y-\sum_i \lambda_i U_i(x)}+\sum_i \lambda_i \Triple{U_i(x)} \leqslant \epsilon + \triple{x},
$$
since each $U_i$ is also a $\triple{\cdot}$-isometry.
As $\epsilon>0$ was arbitrary, we deduce that $\triple{y}\leqslant \triple{x}$ and by symmetry that $\triple{y}=\triple{x}$.
Therefore $\triple{x}$ is constant on the unit sphere $S_X^{\norm\cdot}$ of $X$, which means that $\triple{\cdot}$ is a multiple of ${\norm{\cdot}}$.
In particular, 
$$
G={\rm Isom}(X,\triple{\cdot})={\rm Isom}(X,{\norm{\cdot}}),
$$
contradicting that $G$ is a proper subgroup of ${\rm Isom}(X,{\norm{\cdot}})$.
\end{proof}

It would certainly be optimistic to hope these necessary conditions to be sufficient in general. 
However, instead, we may try to diminish the gap between our necessary and sufficient conditions and shall therefore now turn our attention to sufficient conditions. 

\begin{defi} 
Let $X$ be a Banach space, $G$ a group of automorphisms on $X$ and $x\in X$. We shall say that $x$ is {\em distinguished} by $G$ if $\inf_{T \in G\setminus \{\Id\}}\norm{Tx-x}>0$.
\end{defi}

In other words, $x$ is distinguished by $G$ if the orbit map $T\in G\mapsto Tx\in X$ is injective and, moreover, the orbit $Gx$ is discrete.
It follows that if $G$ is an \textsc{sot}-closed group of isometries with a distinguished point, then $G$ must be \textsc{sot}-discrete, and hence countable whenever $X$ is separable. 

\begin{thm}[V. Ferenczi and E. M.  Galego \cite{FG}]\label{FG} Let $X$ be a separable real Banach space with an LUR norm. Let $G$ be a  group of isometries on $X$ which contains $-\Id$ and admits a distinguished point. Then there exists an equivalent norm ${\triple{\cdot}}$ on $X$ such that $G={\rm Isom}(X,{\triple{\cdot}})$.
\end{thm}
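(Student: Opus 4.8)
The plan is to build an equivalent norm $\triple\cdot$ whose unit ball is $B_X$ decorated with a $G$-invariant family of small, mutually non-congruent ``marks'', placed so that $\triple\cdot$ is $G$-invariant — which forces $G\subseteq{\rm Isom}(X,\triple\cdot)$ — while the marks, and the way $G$ permutes them, can be recovered from the intrinsic geometry of $(X,\triple\cdot)$ alone — which forces the reverse inclusion. First, observe that being distinguished is an open condition around $x_0$: writing $\delta:=\inf_{g\neq\Id}\norm{gx_0-x_0}>0$ and using that each $g\in G$ is a $\norm\cdot$-isometry, we get $\norm{gx_0-hx_0}=\norm{h^{-1}gx_0-x_0}\geqslant\delta$ for $g\neq h$ (so the orbit map $g\mapsto gx_0$ is injective, $Gx_0$ is $\delta$-separated, and $G$ is \textsc{sot}-discrete, hence countable), and $\norm{g(x_0+w)-(x_0+w)}\geqslant\norm{gx_0-x_0}-\norm{gw-w}\geqslant\delta-2\norm w>\delta/2$ whenever $\norm w<\delta/4$ and $g\neq\Id$. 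Thus I may fix a countable set $\{v_k:k\geqslant 0\}$ dense in $B(x_0,\delta/4)$ with $v_0=x_0$; then $\overline{\rm span}\{v_k:k\}=X$ and each orbit $Gv_k$ is $\delta/2$-separated with injective orbit map. This is the only place the distinguished-point hypothesis is used, and it costs just the triangle inequality.

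Next I would define the norm (in dimension $\geqslant 3$; the low-dimensional cases are elementary). Normalise $\norm{x_0}=1$, fix a $2$-dimensional disk $D\subseteq X$, and for each $k$ choose $\alpha_k$ with $\norm{\alpha_kv_k}$ slightly bigger than $1$ together with small $\rho_k,\sigma_k,\eps_k>0$. Put
$$
B_{\triple\cdot}:=\overline{\rm conv}\Big(B_X\ \cup\ \bigcup_{k\geqslant 0}\bigcup_{g\in G}\big(g(\alpha_kv_k+\rho_kD)\ \cup\ g(\alpha_kv_k+\eps_kx_0+\sigma_kD)\big)\Big),
$$
so that $g(\alpha_kv_k+\rho_kD)$ is a small flat $2$-cell centred at $\alpha_kgv_k$, sitting slightly outside $B_X$, carrying a nearby ``companion'' $2$-cell $g(\alpha_kv_k+\eps_kx_0+\sigma_kD)$ centred at $\alpha_kgv_k+\eps_kgx_0$. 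The set in parentheses is bounded, contains a neighbourhood of $0$, and is symmetric (here $-\Id\in G$ is essential: $-\alpha_kgv_k=\alpha_k\big((-\Id)g\big)v_k$ keeps the family closed under $x\mapsto-x$), so its gauge $\triple\cdot$ is an equivalent norm; and $\triple\cdot$ is $G$-invariant, since $B_X$ is and the family of cells is a union of $G$-orbits with $G$-invariant parameters, whence $G\subseteq{\rm Isom}(X,\triple\cdot)$. Since $\norm\cdot$ is LUR, hence strictly convex, $B_X$ has no flat $2$-cells of its own; so, taking all $\alpha_k,\rho_k,\sigma_k,\eps_k$ to shrink fast enough — the cells accumulate only at the points of $Gx_0$, and there the pointwise modulus of rotundity of the LUR norm gives the room — I would arrange that $B_{\triple\cdot}$ is convex, that the cells are pairwise disjoint exposed $2$-faces and are the only $2$-dimensional faces of $B_{\triple\cdot}$, and that the diameters $\{\rho_k\}\cup\{\sigma_k\}$ are pairwise distinct.

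To finish, take $U\in{\rm Isom}(X,\triple\cdot)$. Being a surjective linear isometry, $U$ permutes the $2$-dimensional faces of $B_{\triple\cdot}$ preserving their diameters; these being pairwise distinct, $U$ respects the level $k$ and the main/companion distinction, and by the $\delta/2$-separation of $Gv_k$ it carries the pair of cells near $\alpha_kv_k$ to the pair near $\alpha_kgv_k$ for a single $g=g(k)\in G$, hence $Uv_k=g(k)v_k$. For $k=0$ this says $Ux_0\in Gx_0$; choosing $g\in G$ with $gx_0=Ux_0$ and setting $V:=g^{-1}U\in{\rm Isom}(X,\triple\cdot)$, we get $Vx_0=x_0$ and $Vv_k=g_kv_k$ with $g_k\in G$. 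But then $V$ sends the companion centre $\alpha_kv_k+\eps_kx_0$ both to $\alpha_kg_kv_k+\eps_kg_kx_0$ (the companion centre beside $\alpha_kg_kv_k$) and, by linearity and $Vx_0=x_0$, to $\alpha_kg_kv_k+\eps_kx_0$; so $g_kx_0=x_0$, whence $g_k=\Id$ by injectivity of the orbit map, i.e. $Vv_k=v_k$ for every $k$. As $\overline{\rm span}\{v_k\}=X$ and $V$ is bounded and linear, $V=\Id$, so $U=g\in G$. Together with $G\subseteq{\rm Isom}(X,\triple\cdot)$ this yields ${\rm Isom}(X,\triple\cdot)=G$.

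The step I expect to be the real obstacle is the renorming of the second paragraph: one must place infinitely many marks on the sphere — accumulating at every point of the discrete but possibly infinite orbit $Gx_0$ — while simultaneously keeping $B_{\triple\cdot}$ convex, keeping $\triple\cdot$ equivalent to $\norm\cdot$, and keeping the marks intrinsically recognisable and pairwise non-congruent. This is precisely what the LUR hypothesis buys: strict convexity excludes flat faces of $B_X$ that would compete with the marks, and the pointwise modulus of rotundity at each accumulation point $gx_0$ supplies the uniform quantitative control needed to insert the shrinking marks without destroying convexity. Organising all of these estimates at once — and pinning down a genuinely intrinsic invariant to serve as ``the diameter of a mark'' — is where the bulk of the technical work lies.
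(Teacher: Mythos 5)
Your overall strategy --- decorate the unit ball with a $G$-invariant family of small, intrinsically recognisable, mutually non-congruent marks placed near the distinguished point, so that $G$-invariance gives $G\subseteq{\rm Isom}(X,\triple\cdot)$ and recognisability gives the converse --- is exactly the strategy of the paper (which uses one-dimensional pimples, i.e.\ isolated extreme points together with the lengths of their maximal segments, where you use $2$-cells of distinct diameters, and the telescoped sequence $y_n=\sum_{k\leqslant n}\mu_kx_k$ in place of your companion-cell device). Your first and third paragraphs are sound. But the second paragraph contains a genuine gap, and it is not merely that ``the bulk of the technical work'' remains: as set up, the construction cannot work, because you place the marks at a set $\{v_k\}$ \emph{dense in a ball} and simultaneously let the heights $h_k:=\norm{\alpha_kv_k}-1>0$ ``shrink fast enough''. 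These two choices are incompatible. Indeed, for unit vectors $u,u'$ and heights $h>h'\geqslant 0$, setting $s=\frac{h-h'}{1+h}$ one has $(1-s)(1+h)u+sB_X=(1+h')u+sB_X\subseteq{\rm conv}\big(B_X\cup\{(1+h)u\}\big)$, so if $\norm{u-u'}<\frac{h-h'}{(1+h)(1+h')}$ then $(1+h')u'$ lies in the \emph{interior} of that hull, hence in the interior of $B_{\triple\cdot}$. Apply this with $u=x_0$ and $h=h_0>0$: since the normalised centres $v_j/\norm{v_j}$ accumulate at $x_0$ while $h_j\to0$, all but finitely many of the cells near $x_0$ have their centres in the interior of $B_{\triple\cdot}$ and are not faces at all --- they are swallowed by the mark at $x_0$ (and the same happens around every $v_k$, each being an accumulation point of the others). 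Avoiding the swallowing forces $|h_j-h_k|\lesssim\norm{u_j-u_k}$ for all pairs, i.e.\ a height function that is Lipschitz and bounded away from $0$ on the whole cap; then the marks accumulate on one another at positive height and you lose any hope that the cells are isolated, pairwise disjoint exposed $2$-faces with recoverable, pairwise distinct diameters, or that you control \emph{all} $2$-faces of the hull. Note that this obstruction uses only convexity of $B_X$; the LUR hypothesis cannot rescue it.

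The repair is precisely the step the paper inserts before decorating: do not decorate the dense set itself, but first pass to the normalisations $y_n$ of $\sum_{k\leqslant n}\mu_kx_k$ with $\mu_k$ decreasing rapidly. Lemma \ref{tec} then provides, for each $n$, a single constant $c_n>0$ with $\norm{y_m-gy_n}\geqslant c_n$ whenever $y_m\neq gy_n$ --- a separation a dense set can never have --- and only \emph{then} are the heights chosen, recursively, small compared with $c_n$ and with the LUR modulus $\lambda(y_n,c_n)$; Lemma \ref{LemmaGbellenot} and Proposition \ref{Gbellenot} are exactly the non-interference statements you defer to ``I would arrange that''. The recursion $Ty_n=g_ny_n\Rightarrow Tx_k=g_nx_k$ for $k\leqslant n$ then does the job of your companion cells. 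Your endgame (normalising by $g^{-1}$, deducing $g_k=\Id$ from $g_kx_0=x_0$, and concluding from $\overline{\rm span}\{v_k\}=X$) is correct, but it rests on a renorming that does not exist in the form you describe.
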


Here we recall that a norm $\norm{\cdot}$ is {\em locally uniformly rotund} or {\em LUR} at a point $x\in S_X$ if the following condition holds
$$
\a \epsilon>0 \;\; \e\delta>0\;\; \a y\in S_X\; \big ( \norm{x-y} \geqslant \epsilon\saa \norm{x+y} \leqslant 2-\delta\big).
$$
Equivalently, the norm is LUR at $x$ if $\lim_n x_n=x$ whenever $\lim_n \norm{x_n}=\norm{x}$ and $\lim_n \norm{x+x_n}=2\norm{x}$.
Also, the norm of $X$ is said to be {\em locally uniformly rotund} if it is locally uniformly rotund at every point of $S_X$.
It may  be seen directly that a group  of isometries  on a  space with LUR norm and with a distinguished point must satisfy the last necessary condition of Proposition \ref{necessary}. In other words, we have the following.

\begin{obs} 
Let $X$ be a real Banach space, $y$ be a point of $S_X$ where the norm is LUR and let $G\leqslant {\rm Isom}(X)$ be such that the orbit $Gy$ is discrete. Then there exist $x \in S_X$, $x^* \in S_{X^*}$ such that $\sup_{T \in G}x^*(Tx)<1$.
\end{obs}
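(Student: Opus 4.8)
The plan is to take $x=y$ and to exhibit a single functional $x^{*}\in S_{X^{*}}$ for which $\sup_{T\in G}x^{*}(Ty)<1$; by Lemma \ref{conv trans} this is precisely the assertion that $G$ does not act convex transitively on $(X,\norm\cdot)$, which is what we want.

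First I would extract the two relevant constants. Since $G$ acts by isometries, discreteness of the orbit $Gy$ yields a \emph{uniform} $\epsilon>0$ with $\norm{Ty-y}\geqslant\epsilon$ for every $T\in G$ such that $Ty\neq y$. Feeding this $\epsilon$ into the hypothesis that the norm is LUR at $y$ produces a $\delta\in(0,1]$ such that $\norm{z+y}\leqslant 2-\delta$ for every $z\in S_{X}$ with $\norm{z-y}\geqslant\epsilon$; as each $T\in G$ is a surjective isometry, $Ty\in S_{X}$, so this bound applies to $z=Ty$ precisely when $Ty\neq y$. The second step is to choose $x^{*}\in S_{X^{*}}$ with $x^{*}(y)=1-\tfrac\delta2$. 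Granting this, for $T$ fixing $y$ (in particular for $T=\Id$) we have $x^{*}(Ty)=1-\tfrac\delta2$, while for $Ty\neq y$,
$$x^{*}(Ty)=x^{*}(Ty+y)-x^{*}(y)\leqslant\norm{Ty+y}-\Big(1-\tfrac\delta2\Big)\leqslant(2-\delta)-\Big(1-\tfrac\delta2\Big)=1-\tfrac\delta2,$$
so in every case $x^{*}(Ty)\leqslant 1-\tfrac\delta2<1$. Equivalently, $\overline{\rm conv}\,Gy$ is contained in the slice $\{z:\ x^{*}(z)\leqslant 1-\tfrac\delta2\}\subsetneq B_{X}$, so $G$ is not convex transitive.

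The only point requiring care is the existence of $x^{*}$, and this is the conceptual crux. What is needed is a norm-one functional whose value at $y$ lies strictly between $1-\delta$ and $1$: the strict inequality $x^{*}(y)<1$ is essential because the identity already gives $\sup_{T}x^{*}(Ty)\geqslant x^{*}(y)$, so a functional norming $y$ would be useless; the lower bound $x^{*}(y)>1-\delta$ is what makes the estimate above close up below $1$. When $\dim X\geqslant2$ this is immediate: the map $\phi\mapsto\phi(y)$ is continuous on the sphere $S_{X^{*}}$, which is connected in the norm topology, and by Hahn--Banach its supremum is $\norm y=1$ and its infimum is $-1$, so it attains every value in $(-1,1)$, in particular $1-\tfrac\delta2$. (For $\dim X\leqslant 1$ the statement as phrased can genuinely fail, e.g. $X=\R$ with $G=\{\pm\Id\}$, but this does not concern us, all spaces in play being infinite-dimensional.) Thus the obstacle is not computational but a matter of choosing the right functional: one must tilt slightly off any functional that norms $y$ and let the LUR inequality handle the remainder of the orbit.
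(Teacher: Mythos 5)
Your proof is correct, but it resolves the central difficulty by a genuinely different device than the paper. Both arguments start from the same estimate: the uniform separation of the orbit $Gy$ at $y$, fed into the LUR modulus at $y$, gives $\norm{Ty+y}\leqslant 2-\delta$ whenever $Ty\neq y$. The problem is the isometries fixing $y$ (at least the identity), for which no such bound is available. The paper keeps a functional $x^*$ norming $y$ (so $x^*(y)=1$) and instead perturbs the \emph{point}: it picks $x\in S_X$ with $0<\norm{x-y}<\epsilon$ and checks, via the triangle inequality and the orbit separation, that $\norm{Tx-y}\geqslant \norm{x-y}>0$ for \emph{every} $T\in G$, so that a second application of the LUR property at $y$ (with a modulus $\beta$ at the scale $\norm{x-y}$) yields $\norm{Tx+y}\leqslant 2-\beta$ uniformly and hence $x^*(Tx)\leqslant 1-\beta$. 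You instead keep $x=y$ and perturb the \emph{functional}, taking $x^*(y)=1-\delta/2$; then the stabilizer of $y$ is handled trivially by $x^*(y)<1$ and the rest of the orbit by the single LUR bound. Your route is slightly shorter (one application of LUR instead of two, no triangle-inequality computation) at the price of having to manufacture a norm-one functional with the prescribed value $1-\delta/2$ at $y$, which your connectedness-of-$S_{X^*}$ argument supplies correctly for $\dim X\geqslant 2$. Your caveat about dimension one is fair but applies equally to the paper's proof, which needs a point $x\in S_X$ with $0<\norm{x-y}<\epsilon$ and hence also tacitly assumes $y$ is not isolated in $S_X$; since all spaces in play are infinite dimensional, this is harmless in both cases.
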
 

\begin{proof} Let $\alpha>0$ be such that $\|y-Ty\| \geqslant \alpha$ whenever $ Ty \neq y$. Since the norm is LUR at $y$, it follows that there is some $\epsilon>0$ such that $\|y+Ty\| \leqslant 2-2\epsilon$ whenever $Ty \neq y$. Moreover,  we may assume that $2\epsilon<\alpha$.
Pick some $x \in S_X$ such that $0<\norm{x-y}<\epsilon$, and let $x^* \in S_{X^*}$ be such that $x^*(y)=1$. By the LUR property in $y$, let $\beta>0$ be such
that 
$$
\norm{y-z} \geqslant \norm{y-x} \Rightarrow \|z+y\| \leqslant 2-\beta.
$$
Now, if for some $T \in G$, $Ty=y$, then $\norm{y-Tx}=\norm{Ty-Tx} = \norm{y-x}$. 
If, on the other hand, $Ty \neq y$, then 
$$
\norm{y-Tx} \geqslant  \norm{Ty-y}-\norm{Ty-Tx} \geqslant \alpha-\norm{y-x} \geqslant \norm{y-x}.
$$
In either case, $\norm{y-Tx} \geqslant  \norm{y-x}$ and therefore $\|Tx+y\| \leqslant 2-\beta$ and
$$
x^*(Tx)=x^*(Tx+y)-x^*(y) \leqslant 1-\beta.
$$
As this holds for any $T \in G$, this completes the proof.
\end{proof}

This result suggests the existence of positive results about displays of groups under hypotheses of existence of discrete orbits, weaker than the existence of a distinguished point.  So, in what follows, we shall consider a closed subgroup $G$ of $S_\infty$ represented as a group of isometries on a Banach space $X$ and characterize when such a representation is a display. We first  consider the case when $X=c_0$ and shall see that there are explicit definitions of renormings on $c_0$ which transform a representation into a display.


\section{Subgroups of $S_{\infty}$ and the space $c_0$}

We recall that the {\em infinite symmetric group}, $S_\infty$, is the group of all permutations of the natural numbers $\N=\{0,1,2,\ldots\}$, which becomes a Polish group when equipped with the {\em permutation group topology} whose basic open sets are of the form 
$$
V(f,x_1,\ldots,x_n)=\{g\in S_\infty\del g(x_i)=f(x_i) \text{ for all $i=1,\ldots, n$}\},
$$
where $f\in S_\infty$ and $x_1,\ldots, x_n\in \N$.

\subsection{Groups of automorphism on graphs}

We shall need the following folklore result.
\begin{prop}\label{graph}
Any closed subgroup of $S_\infty$ is topologically isomorphic to the full automorphism group of a countable connected graph.
\end{prop}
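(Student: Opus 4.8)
The plan is to start from the standard fact that a closed subgroup $G \leqslant S_\infty$ acting on $\N$ is the automorphism group of a countable relational structure, and then to encode that structure as a single connected graph. Concretely, recall that the pointwise stabilizers $G_{(F)}$ of finite sets $F\subseteq\N$ form a neighbourhood basis of the identity, so $G$ is exactly the group of permutations of $\N$ preserving every $G$-invariant relation on $\N$; in particular, taking for each $n$ the collection of orbits of $G$ acting diagonally on $\N^n$, one gets a relational language $\mathcal L$ and an $\mathcal L$-structure $M$ on universe $\N$ with $\mathrm{Aut}(M)=G$, where the equality is even an equality of topological groups since both topologies are that of pointwise convergence on $\N$. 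This much I would quote as folklore (or cite Becker--Kechris). The remaining and genuinely combinatorial task is to replace the structure $M$ by a graph.

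First I would reduce the language to a single binary relation: an $n$-ary relation $R$ on $\N$ can be coded by adjoining, for each tuple $\bar a \in R$, a new vertex $v_{\bar a}$ joined to the vertices $a_1,\dots,a_n$, with the coordinates distinguished by attaching gadgets (e.g. paths of pairwise distinct lengths $\ell_1 < \ell_2 < \cdots$, chosen also to be longer than anything used elsewhere) from $v_{\bar a}$ to $a_i$. Doing this for all relations simultaneously, and keeping the new auxiliary vertices rigidly distinguishable by the lengths/shapes of these gadgets, yields a countable graph $\Gamma$ whose automorphism group, when restricted to the ``original'' vertices $\N$, is precisely $\mathrm{Aut}(M)=G$, and on which no automorphism can move an original vertex off $\N$ or permute the coordinate-gadgets; so $\mathrm{Aut}(\Gamma)\cong G$ topologically. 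Here one must be a little careful that the gadget graphs themselves are rigid and pairwise non-isomorphic so that they cannot be interchanged — this is the point where the bookkeeping is fiddly but not deep.

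Finally I would arrange connectedness. If $\Gamma$ is not already connected, pick a basepoint in each connected component and attach all these basepoints to one new ``hub'' vertex $h$ by paths whose lengths are again chosen pairwise distinct and distinct from every length occurring inside $\Gamma$, so that $h$ and the spokes are fixed by every automorphism and no component can be mapped to another; one may also need to first make each component individually rigidifiable in the same way, so that the only freedom left is the action of $G$ on $\N$. The resulting graph is countable, connected, and has automorphism group topologically isomorphic to $G$.

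I expect the main obstacle to be purely organisational: ensuring that all the auxiliary vertices (the tuple-vertices $v_{\bar a}$, the coordinate paths, the hub and spokes) are \emph{rigidly} attached, i.e. that every graph automorphism must fix them pointwise and hence restricts to an element of $S_\infty$ preserving exactly the relations of $M$. Choosing all gadget lengths distinct and larger than any distance that can arise ``accidentally'' in the base graph is the standard trick; verifying it does not short-circuit $G$'s action is routine but needs care. There is no serious analytic or set-theoretic difficulty — the content is the folklore correspondence between closed subgroups of $S_\infty$ and automorphism groups of countable structures, plus a rigidity-gadget construction.
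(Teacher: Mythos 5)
Your proposal is correct and follows essentially the same route as the paper: both rest on the fact that a closed subgroup of $S_\infty$ is exactly the group of permutations preserving the orbits of the diagonal action on finite tuples, and both encode that orbit structure into a graph by attaching rigidity gadgets (the paper uses a binary concatenation tree on tuples of length $2^k$ together with pendant-leaf counts $o(s)\geqslant 7$ to code the orbit of $s$, where you use coordinate paths of pairwise distinct lengths plus relation-coding gadgets). Note only that the connectedness step is automatic in your construction --- any two base vertices $a,b$ are already joined through the tuple-vertex of $(a,b)$ --- so the hub (which would anyway be delicate if components were permuted) is not needed.
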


\begin{proof}
Suppose $G\leqslant S_\infty$ is a closed subgroup. The action of $G$ on $\N$ extends canonically to a continuous action of $G$ on the countable discrete set $\N^{<\N}$ of all finite sequences of natural numbers by
$$
g\cdot (x_1,\ldots,x_n)=(g(x_1),\ldots, g(x_n)).
$$
Let also $X=\N^0\cup \N^1\cup \N^2\cup \N^4\cup \N^8\cup\ldots$ and let $o\colon X\til \{n\in \N\del n\geqslant 7\}$ be a function satisfying
$$
o(s)=o(t)\;\equi\; \e g\in G\;\;\; g\cdot s=t.
$$
We can now define a countable graph $\Gamma$ with vertex set $V(\Gamma)$ and edge set $E(\Gamma)$ as follows. 

Using $X$ as an initial set of vertices, we define $\Gamma$ as follows:
\begin{itemize}
\item If $s=x_1\ldots x_{n}\in X$ and $t=y_1\ldots y_{n}\in X$, $n\geqslant 1$, we connect $s$ to $st$ as follows.

\setlength{\unitlength}{.5cm}
\begin{picture}(10,5)(0,-1)
\put(0,0){$\bullet$}
\put(5,0){$\bullet$}
\put(5,2){$\bullet$}
\put(10,0){$\bullet$}

\put(0.2,0.15){\line(1,0){10}}
\put(5.15,0.1){\line(0,1){2}}
\put(-.7,0){$s$}
\put(10.6,0){$st$}
\put(5,-.6){$a_{s,t}$}
\put(5.6,2){$b_{s,t}$}
\end{picture}

\noindent
and connect $t$ to $st$ as follows

\begin{picture}(10,5)(0,-1)
\put(0,0){$\bullet$}
\put(5,0){$\bullet$}
\put(4,2){$\bullet$}
\put(6,2){$\bullet$}
\put(10,0){$\bullet$}

\put(0.2,0.15){\line(1,0){10}}
\put(5.15,0.15){\line(1,2){1}}
\put(5.15,0.15){\line(-1,2){1}}
\put(-.7,0){$t$}
\put(10.6,0){$st$}
\put(5,-.6){$c_{s,t}$}
\put(6.5,2){$d_{s,t}$}
\put(3,2){$e_{s,t}$}
\end{picture}

\item Also, we connect any $x\in \N^1$ to $\tom\in \N^0$ by a simple edge

\begin{picture}(10,3)(0,-1)
\put(0,0){$\bullet$}
\put(5,0){$\bullet$}

\put(0.2,0.15){\line(1,0){5}}

\put(-.7,0){$x$}
\put(5.6,0){$\tom$}
\end{picture}

\item Moreover, for any $s=x_1\ldots x_{n}\in X$, $n\geqslant 0$, we add $o(s)$ new neighbouring vertices

\begin{picture}(10,7)(0,-4)
\put(0,0){$\bullet$}
\put(4,0){$\bullet$}
\put(3.8,.9){$\bullet$}
\put(3.45,1.7){$\bullet$}
\put(3,2.2){$\bullet$}
\put(3,-3){$\bullet$}

\put(0.2,0.15){\line(4,3){3}}
\put(0.2,0.15){\line(2,1){3.4}}
\put(0.2,0.15){\line(4,1){3.7}}
\put(0.2,0.15){\line(1,0){4}}

\put(0.2,0.15){\line(1,-1){3}}

\put(-.7,0){$s$}
\put(6,0){$\Big\}\;\;o(s) \text{ many vertices}$}

\thicklines

\qbezier[10](3,-.1)(2.8,-.8)(2.3,-1.5)
\end{picture}

\end{itemize}
This ends the description of $\Gamma$. We now see that every vertex in $X$ has infinite valence, while every vertex in $V(\Gamma)\setminus X$ has finite valence. Thus, $X$ is invariant under automorphisms of $\Gamma$. Moreover, every vertex $s\in X$ neighbours exactly $o(s)$ vertices of valence $1$, which shows that also $o(\cdot)$ is invariant under automorphisms of $\Gamma$.

Now, suppose $g$ is an automorphism of $\Gamma$. Then $g$ induces a permutation of $X$ preserving $o(\cdot)$ and, in particular, $g[\N^n]=\N^n$ for all even $n$. We claim that if $s,t,u,v\in \N^n$ and $g(st)=uv$, then also $g(s)=u$ and $g(t)=v$. But this is easy to see since $s$ and $t$ are the unique vertices connected to $st$ by the schema above and similarly for $u$, $v$ and $uv$.

It follows from this that $g(x_1,\ldots,x_n)=(g(x_1),\ldots, g(x_n))$ for any $(x_1,\ldots, x_n)\in X$. And so $g$ as a permutation on $X$ satisfies 
$$
g(s)=t\;\saa \; o(s)=o(t)\;\saa\; \e f\in G\; f\cdot s=t
$$
for any $s,t\in X$. Since $G$ is closed in $S_\infty$ it follows that the induced permutation $g$ of $\N$ belongs to $G$. Conversely, any element of $G$ easily induces an automorphism of $\Gamma$. Furthermore, it is clear that the corresponding map is a topological isomorphism.
\end{proof}


\subsection{New norms on $c_0$ }
Suppose $\Gamma$ is a connected graph on $\N$ with corresponding path metric $d_\Gamma$ and let $c_{00}$ denote the vector space with basis $(e_n)_{n\in \N}$. We define a norm $\|\cdot\|_\Gamma$ on $c_{00}$ by
\begin{align}
\|\sum a_ne_n\|_\Gamma=\;&\sup \Big\{\big|a_n+\frac{a_m}{1+2d_\Gamma(n,m)}\big|\colon  n\neq m\Big\}\\
\bigvee \;&\sup \Big\{\big|a_n-\frac{a_m}{2+2d_\Gamma(n,m)}\big|\colon  n\neq m\Big\}\\
\bigvee\; & \|(a_n)\|_\infty.
\end{align}
Then clearly $\|\cdot\|_\infty\leqslant \|\cdot\|_\Gamma\leqslant \frac 43\|\cdot\|_\infty$, and hence the completion of $c_{00}$ with respect to the norm $\|\cdot\|_\Gamma$ is just $c_0$.  Note that for any $\sum a_ne_n\in c_0$, we have 
$$
\|\sum a_ne_n\|_\Gamma=\|\sum a_ne_n\|_\infty
$$
if and only if $\sum a_n e_n=a_p e_p$ for some $p\in \N$.

We denote by $B(c_0, \|\cdot\|_\Gamma)$ the closed unit ball of $(c_0, \|\cdot\|_\Gamma)$.
\begin{lemme}
For every $p\in \N$, $e_p$ is an extreme point of $B(c_0, \|\cdot\|_\Gamma)$.
\end{lemme}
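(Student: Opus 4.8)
The plan is to unwind the definition of $\|\cdot\|_\Gamma$ at a hypothetical midpoint representation $e_p=\tfrac12(x+y)$ and exploit the built-in asymmetry between the two coupling terms in the supremum.

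First, note that $e_p$ lies on the unit sphere: by the remark preceding the statement, $\|e_p\|_\Gamma=\|e_p\|_\infty=1$, since $e_p$ is supported on a single coordinate. So it is meaningful to ask whether $e_p$ is an extreme point of $B(c_0,\|\cdot\|_\Gamma)$, and it suffices to show that $e_p$ is not the midpoint of two distinct points of the ball.

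Suppose then that $e_p=\tfrac12(x+y)$ with $x=\sum b_ne_n$ and $y=\sum c_ne_n$ both in $B(c_0,\|\cdot\|_\Gamma)$; I will show $x=y=e_p$. Comparing coordinates gives $b_p+c_p=2$ and $b_n+c_n=0$ for $n\neq p$. Since $\|\cdot\|_\infty\leqslant\|\cdot\|_\Gamma$, we have $|b_p|,|c_p|\leqslant1$, which together with $b_p+c_p=2$ forces $b_p=c_p=1$. It therefore remains to prove that $b_m=0$ for every $m\neq p$ — for then $c_m=-b_m=0$ as well, and both $x$ and $y$ equal $e_p$. Fix $m\neq p$ and assume $b_m\neq0$. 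If $b_m>0$, then the first clause in the definition of $\|x\|_\Gamma$, evaluated at the pair $(n,m)=(p,m)$, gives
$$
\|x\|_\Gamma\;\geqslant\;\Big|b_p+\frac{b_m}{1+2d_\Gamma(p,m)}\Big|\;=\;1+\frac{b_m}{1+2d_\Gamma(p,m)}\;>\;1,
$$
contradicting $x\in B(c_0,\|\cdot\|_\Gamma)$. If instead $b_m<0$, then the second clause, evaluated at the same pair $(p,m)$, gives
$$
\|x\|_\Gamma\;\geqslant\;\Big|b_p-\frac{b_m}{2+2d_\Gamma(p,m)}\Big|\;=\;1+\frac{|b_m|}{2+2d_\Gamma(p,m)}\;>\;1,
$$
again a contradiction. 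Hence $b_m=0$ for all $m\neq p$, as required.

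There is no serious obstacle here; the whole content is the asymmetric design of the norm. Having one coupling term with a $+$ sign (and weight $\frac{1}{1+2d_\Gamma}$) and a separate one with a $-$ sign (and weight $\frac{1}{2+2d_\Gamma}$) guarantees that \emph{any} nonzero off-$p$ perturbation of $e_p$, of either sign, is picked up by one of the two clauses and strictly increases the norm; this is exactly what is needed, and it is also what will later separate $e_p$ from $-e_p$. The same computation shows, more generally, that $e_p=\lambda x+(1-\lambda)y$ with $x,y$ in the ball and $0<\lambda<1$ already forces $x=y=e_p$.
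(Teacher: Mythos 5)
Your proof is correct and follows essentially the same route as the paper: derive $a_p=b_p=1$ from the $\ell_\infty$ lower bound and then rule out any nonzero off-$p$ coordinate. The only cosmetic difference is that the paper invokes the preceding remark (that $\|\cdot\|_\Gamma$ equals $\|\cdot\|_\infty$ only on singly supported vectors), whereas you inline the verification of that fact via the two asymmetric coupling terms; both are fine.
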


\begin{proof}
Note that if $0<\lambda<1$, $\sum a_ne_n,\sum b_ne_n\in B(c_0,\|\cdot\|_\Gamma)$ and 
$$
e_p=\lambda \sum a_ne_n+(1-\lambda)\sum b_n e_n=\sum \big(\lambda a_n+(1-\lambda)b_n\big)e_n,
$$
then, as $|a_n|,|b_n|\leqslant 1$ for all $n$, we have $a_p=b_p=1$ and so $\|\sum a_n e_n\|_\infty=1=\|\sum a_ne_n\|_\Gamma$ and $\|\sum b_n e_n\|_\infty=1=\|\sum b_ne_n\|_\Gamma$. It thus follows that $a_n=b_n=0$ for all $n\neq p$, i.e., $\sum a_ne_n=\sum b_n e_n=e_p$, showing that $e_p$ is an extreme point.
\end{proof}

\begin{lemme}
If $\sum a_ne_n\in B(c_0,\|\cdot \|_\Gamma)$ is an extreme point, then $\sum a_n e_n=\pm e_p$ for some $p\in \N$.
\end{lemme}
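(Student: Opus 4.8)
The plan is to show that a vector with two or more nonzero coordinates cannot be extreme, so that, combined with the fact that extreme points have $\|\cdot\|_\infty$-norm $1$ and the previous lemma (extreme points $e_p$), the only candidates are $\pm e_p$. So suppose $x = \sum a_n e_n \in B(c_0,\|\cdot\|_\Gamma)$ has $\|x\|_\Gamma = 1$ and has at least two nonzero coordinates. I want to produce $y \neq 0$ with $x \pm y \in B(c_0,\|\cdot\|_\Gamma)$, witnessing that $x$ is not extreme.

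First I would reduce to understanding the "active constraints" at $x$. The norm $\|x\|_\Gamma$ is a supremum over countably many functionals of the form $x \mapsto a_n + \tfrac{a_m}{1+2d_\Gamma(n,m)}$, $x \mapsto a_n - \tfrac{a_m}{2+2d_\Gamma(n,m)}$ (for $n\neq m$), together with $x\mapsto \pm a_n$; since $x \in c_0$, each of these functionals tends to $0$ along the relevant indices, so the supremum is attained and, more importantly, for any $\eta > 0$ only finitely many of them exceed $1-\eta$. The natural perturbation is $y = t\, e_q$ for a suitable index $q$ and small $t > 0$: I need to choose $q$ so that no active (value close to $1$) constraint involves coordinate $q$ with a nonzero coefficient, and then $t$ small enough that the finitely many perturbed constraints involving $q$ stay $\le 1$ while the infinitely many untouched ones are already $\le 1$. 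The key point making this possible is that $x$ has a nonzero coordinate $a_p \neq 0$ \emph{other than} at some index, so the vector is not a single basis multiple; one then picks $q$ to be an index far out in the graph metric from all indices appearing in active constraints — because the coefficients $\tfrac{1}{1+2d_\Gamma(n,m)}$ and $\tfrac{1}{2+2d_\Gamma(n,m)}$ decay and $d_\Gamma$ takes each finite value only finitely often along... actually, more robustly: since $x\in c_0$, pick $q$ with $a_q$ very small; then even the constraints of the form $a_q \pm \tfrac{a_m}{\cdots}$ and $a_n \pm \tfrac{a_q}{\cdots}$ have value close to $|a_n|$ or are small, so adding $t e_q$ with $t \le \tfrac12(1 - \max_{n\neq q}|a_n| + \text{slack})$ keeps everything bounded by $1$. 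I will need the hypothesis that $x$ has two nonzero coordinates precisely to guarantee $\|x\|_\infty < 1$ forces... no: I would instead argue that if $\|x\|_\infty = 1$, say $|a_p| = 1$, then by the remark preceding the lemma $\|x\|_\Gamma > 1$ unless $x = a_p e_p$, contradicting $\|x\|_\Gamma = 1$; and if $\|x\|_\infty < 1$ then there is genuine slack in all the $\pm a_n$ constraints and one works with the affine constraints only.

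Concretely, the steps in order: (1) record that $\|x\|_\Gamma = 1$ and, using the displayed equality condition, that either $x = \pm e_p$ (done) or $\|x\|_\infty < 1$ and moreover $x$ has at least two nonzero entries (otherwise $x = a_p e_p$ with $|a_p|<1$, not on the sphere); (2) fix $\eta>0$ with $\|x\|_\infty < 1 - \eta$ and observe only finitely many of the pair-constraints exceed $1 - \eta$, involving a finite set $F\subseteq\N$ of coordinates; (3) choose $q \notin F$ — possible since $F$ is finite — and note every constraint involving coordinate $q$ that is \emph{not} already among the finitely many active ones has value $\le 1-\eta$ at $x$; (4) take $y = t e_q$ with $t>0$ small enough that perturbing the (finitely many) constraints touching $q$ by $\pm t\cdot(\text{coefficient})$, each coefficient being $\le 1$, keeps their values $\le 1$, using the slack $\eta$ — and also $|a_q| + t \le 1$; (5) conclude $\|x \pm t e_q\|_\Gamma \le 1$, so $x$ is a proper midpoint, hence not extreme.

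The main obstacle I anticipate is step (3)–(4): one must be careful that the \emph{active} constraints (those within $\eta$ of $1$) genuinely do not involve $q$, and that among the remaining infinitely many constraints touching $q$, the slack is uniformly bounded below by $\eta$, not merely positive — this is where I'd lean on the choice of $\eta$ via $\|x\|_\infty < 1 - \eta$ so that the "background" bound $1-\eta$ applies uniformly and only the finitely many exceptions (none touching $q$) need individual attention. Once the finiteness of the exceptional set is pinned down, the choice of $q$ outside it and then of $t$ small is routine, and the symmetry $x \mapsto -x$, $e_q \mapsto -e_q$ handles both signs at once. This, together with the preceding lemma, yields that the extreme points of $B(c_0,\|\cdot\|_\Gamma)$ are exactly $\{\pm e_p : p \in \N\}$.
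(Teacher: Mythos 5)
Your proof is correct and follows essentially the same route as the paper's: reduce via the equality case $\|\cdot\|_\Gamma=\|\cdot\|_\infty$ (and the remark preceding the lemma) to the situation $\|x\|_\infty<1$, then exhibit $x$ as the midpoint of $x\pm t e_q$ for a well-chosen coordinate $q$ and small $t>0$. The only cosmetic difference is that the paper selects $q$ simply by requiring $|a_q|<\delta/2$, which together with the factor $\leqslant 1/3$ on the cross terms gives a uniform bound $<1$ on every perturbed constraint, whereas you isolate the finite set of near-active constraints and choose $q$ outside it --- both choices are available precisely because $x\in c_0$.
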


\begin{proof}By the comments above, it suffices to show that $\|\sum a_ne_n\|_\infty=\|\sum a_n e_n\|_\Gamma$.
So suppose towards a contradiction that $\|\sum a_n e_n\|_\infty <1$ and find some $0<\delta<\frac 12$ such that $\|\sum a_ne_n\|_\infty<1-\delta$. Pick also some $p$ such that $|a_p|<\frac \delta 2$. 
Then for all $m\neq p$, 
$$
\Big| a_p\pm \frac \delta2+\frac {a_m}{1+2d_\Gamma(p,m)}\Big|<1,
$$
$$
\Big| a_m+\frac {a_p\pm \frac \delta2}{1+2d_\Gamma(m,p)}\Big|<1.
$$
$$
\Big| a_p\pm \frac \delta2+\frac {a_m}{2+2d_\Gamma(p,m)}\Big|<1,
$$
$$
\Big| a_m+\frac {a_p\pm \frac \delta2}{2+2d_\Gamma(m,p)}\Big|<1.
$$
and
$$
|a_p\pm \frac \delta2|<1,
$$
from which is follows that  $\big\|\sum a_ne_n\pm \frac \delta2a_p\big\|=1$. But 
$$
\sum a_ne_n=\frac 12\big(\sum a_ne_n+ \frac \delta2a_p\big )+\frac 12\big(\sum a_ne_n-\frac \delta2a_p\big),
$$
contradicting that $\sum a_ne_n$ is an extreme point.
\end{proof}
So the extreme points of $B(c_0,\|\cdot \|_\Gamma)$ are just $\pm e_p$ for $p\in \N$. 

\begin{lemme}\label{lemme15}
If $T\colon (c_0,\|\cdot \|_\Gamma)\til (c_0,\|\cdot \|_\Gamma)$ is a surjective linear isometry, then there is an automorphism $\phi\colon \Gamma\til \Gamma$ and $\eps=\pm 1$ such that
$$
T(e_n)=\eps e_{\phi(n)}
$$
for all $n\in \N$.
\end{lemme}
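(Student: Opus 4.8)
The plan is to use that a surjective linear isometry of a Banach space maps extreme points of the unit ball to extreme points. By the two preceding lemmas the extreme points of $B(c_0,\|\cdot\|_\Gamma)$ are exactly the vectors $\pm e_n$, $n\in\N$, and since $T$ and $T\inv$ are both surjective isometries, $T$ restricts to a bijection of this set. As $T$ is linear, this yields a permutation $\phi$ of $\N$ and signs $\eps_n\in\{-1,1\}$ with $T(e_n)=\eps_n e_{\phi(n)}$ for all $n$. Since $(e_n)$ spans a dense subspace of $c_0$, the operator $T$ is determined by these values, so the lemma reduces to proving that $\eps_n$ does not depend on $n$ and that $\phi$ is an automorphism of $\Gamma$.

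To extract this, I would test $T$ on the two-dimensional subspaces ${\rm span}(e_n,e_m)$, $n\neq m$. If a vector is supported on $\{n,m\}$, then every term in the definition of $\|\cdot\|_\Gamma$ involving an index outside $\{n,m\}$ is dominated by the $\|\cdot\|_\infty$-term, so $\|\alpha e_n+\beta e_m\|_\Gamma$ depends only on $\alpha$, $\beta$ and the single nonnegative integer $d=d_\Gamma(n,m)$; an elementary computation then gives
$$
\|e_n+e_m\|_\Gamma=1+\frac{1}{1+2d_\Gamma(n,m)},\qquad
\|e_n-e_m\|_\Gamma=1+\frac{1}{2+2d_\Gamma(n,m)}.
$$
Applying $T$ to $e_n+e_m$ and using that it is an isometry, $\|e_n+e_m\|_\Gamma=\|\eps_n e_{\phi(n)}+\eps_m e_{\phi(m)}\|_\Gamma$; by absolute homogeneity of the norm and the symmetry of the formula above in its two coordinates, the right-hand side equals $1+\frac{1}{1+2d_\Gamma(\phi(n),\phi(m))}$ when $\eps_n=\eps_m$ and $1+\frac{1}{2+2d_\Gamma(\phi(n),\phi(m))}$ when $\eps_n=-\eps_m$.

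The step I expect to carry the real weight — and the reason the coefficients in $\|\cdot\|_\Gamma$ are chosen as they are — is the ensuing parity argument. Each of the families $d\mapsto\tfrac1{1+2d}$ and $d\mapsto\tfrac1{2+2d}$ is injective on the positive integers, and their ranges over the positive integers are disjoint, since $1+2d$ is odd while $2+2d$ is even. Consequently the possibility $\eps_n=-\eps_m$ would force $2d_\Gamma(n,m)=1+2d_\Gamma(\phi(n),\phi(m))$, which cannot hold as $d_\Gamma$ is integer-valued. Hence $\eps_n=\eps_m$ for all $n\neq m$, so $\eps_n$ is a constant $\eps$; and simultaneously $d_\Gamma(n,m)=d_\Gamma(\phi(n),\phi(m))$ for all $n\neq m$, so $\phi$ is an isometry of the path metric. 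As $\Gamma$ is connected, two vertices are adjacent exactly when their distance is $1$, so $\phi$ preserves adjacency in both directions, i.e. $\phi$ is an automorphism of $\Gamma$; combined with $T(e_n)=\eps e_{\phi(n)}$ this is the assertion.

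Apart from the extreme-point reduction and this parity trick, the only thing requiring some care is the routine but slightly fiddly verification that on a vector supported in $\{n,m\}$ the supremum defining $\|\cdot\|_\Gamma$ genuinely collapses to the finitely many cross-terms together with $\|\cdot\|_\infty$, and the use of $d_\Gamma(n,m)\geqslant 1$ for $n\neq m$, without which the parity computation would not close.
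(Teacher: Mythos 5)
Your proposal is correct and follows essentially the same route as the paper: reduce to extreme points via the two preceding lemmas, compute $\|e_n\pm e_m\|_\Gamma$ in terms of $d_\Gamma(n,m)$, and use the odd/even parity of the denominators $1+2d$ versus $2+2d$ to force a uniform sign and to recover $d_\Gamma$ (hence the edge relation) from the norm. The only superfluous remark is the appeal to $d_\Gamma(n,m)\geqslant 1$; the parity argument already closes for any nonnegative integer distance.
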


\begin{proof}
Note that $T$ as well as $T^{-1}$ preserves the extreme points of $B(c_0,\|\cdot \|_\Gamma)$ and so for some permutation $\phi$ of $\N$, we have $T(e_n)=\pm e_{\phi(n)}$. We first show that the choice of sign is uniform for all $n$. To see this, note that for any $n\neq m$, 
$$
\|e_n+e_m\|=1+\frac 1{1+2d_\Gamma(n,m)},
$$
while 
$$
\|e_n-e_m\|=1+\frac1{ 2+2d_\Gamma(n,m)}.
$$
So by considering the parity of the denominator in these fractions, we see that for all $n\neq m$ and $p\neq q$,
$$
\|e_n+e_m\|\neq \|e_p-e_q\|,
$$
whence if $T(e_n)=e_{\phi(n)}$, also $T(e_m)=e_{\phi(m)}$ for all $m\in \N$. This accounts for the choice of $\eps=\pm 1$. Now, to see that $\phi$ is an automorphism, we simply note that for any $n\neq m$, the distance $d_\Gamma(n,m)$ can be read off from $\|e_n+e_m\|=\|e_{\phi(n)}+e_{\phi(m)}\|$. So $d_\Gamma(n,m)=d_\Gamma(\phi(n),\phi(m))$, and hence $\phi$ preserves the edge relation.
\end{proof}

\begin{thm} Let $G$ be a closed subgroup of $S_{\infty}$. Then there exists  $\Gamma$  a connected graph on $\N$ such that the group of isometries on $(c_0,\|\cdot\|_\Gamma)$ is topologically isomorphic to $\{-1,1\} \times G$. Therefore $\{-1,1\} \times G$ is displayable on $c_0$.
\end{thm}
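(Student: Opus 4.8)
The plan is to combine Proposition~\ref{graph} with the renorming machinery developed in the previous subsection. First I would invoke Proposition~\ref{graph} to obtain a countable connected graph $\Delta$ whose full automorphism group ${\rm Aut}(\Delta)$ is topologically isomorphic to $G$; after fixing an enumeration of $V(\Delta)$ by $\N$, we may regard $\Delta$ as a connected graph on $\N$ and ${\rm Aut}(\Delta)$ as a closed subgroup of $S_\infty$ equal (as a topological group) to $G$. Set $\Gamma=\Delta$, and consider the space $(c_0,\|\cdot\|_\Gamma)$ with the norm defined by equations (1)--(3) above.

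Next I would identify the isometry group of $(c_0,\|\cdot\|_\Gamma)$. By Lemma~\ref{lemme15}, every surjective linear isometry $T$ of $(c_0,\|\cdot\|_\Gamma)$ has the form $T(e_n)=\eps\, e_{\phi(n)}$ for some $\eps=\pm1$ and some automorphism $\phi$ of $\Gamma$; conversely, any such pair $(\eps,\phi)$ clearly defines a surjective linear isometry, since the norm $\|\cdot\|_\Gamma$ depends only on the coefficients and on the path metric $d_\Gamma$, which is preserved by $\phi$, and is insensitive to a global sign. This gives a group isomorphism ${\rm Isom}(c_0,\|\cdot\|_\Gamma)\cong \{-1,1\}\times{\rm Aut}(\Gamma)=\{-1,1\}\times G$ as abstract groups, the sign component being central (it is $-\Id$) and the product being direct because $-\Id$ commutes with every coordinate permutation.

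It then remains to check that this abstract isomorphism is a topological isomorphism, i.e.\ a homeomorphism, when ${\rm Isom}(c_0,\|\cdot\|_\Gamma)$ carries the strong operator topology and $\{-1,1\}\times G$ carries the product of the discrete topology on $\{-1,1\}$ with the permutation group topology on $G\leqslant S_\infty$. In one direction, if $g_k\to g$ in $S_\infty$ and $\eps_k\to\eps$, then for each fixed basis vector $e_n$ we have $\eps_k e_{g_k(n)}=\eps e_{g(n)}$ eventually, so the associated isometries converge pointwise on $c_{00}$, and since they are uniformly bounded (all are $\frac43$-bounded in $\|\cdot\|_\infty$, hence bounded in $\|\cdot\|_\Gamma$) this extends to pointwise convergence on all of $c_0$. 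Conversely, \textsc{sot}-convergence $T_k\to T$ forces $T_k(e_n)\to T(e_n)$ for each $n$; since the only vectors of the form $\pm e_m$ within $\|\cdot\|_\Gamma$-distance less than, say, $1$ of $\eps e_{\phi(n)}$ is $\eps e_{\phi(n)}$ itself, eventually $T_k(e_n)=\eps_k e_{\phi_k(n)}$ agrees with $\eps\, e_{\phi(n)}$, which recovers both $\eps_k=\eps$ eventually and $\phi_k(n)=\phi(n)$ eventually, i.e.\ convergence in $\{-1,1\}\times G$. Hence the canonical map is a topological isomorphism, so the identity representation of $\{-1,1\}\times G$ on $(c_0,\|\cdot\|_\Gamma)$ is a display, and $\{-1,1\}\times G$ is displayable on $c_0$.

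The only mild subtlety, and the step I would be most careful about, is the verification that \emph{every} pair $(\eps,\phi)$ with $\phi\in{\rm Aut}(\Gamma)$ genuinely yields a $\|\cdot\|_\Gamma$-isometry (the ``converse'' direction of the identification), since Lemma~\ref{lemme15} only gives the forward inclusion; this is where one uses that $\|\cdot\|_\Gamma$ is built symmetrically from $d_\Gamma(n,m)$ and is invariant under a uniform sign change, which is immediate from the defining formula. Everything else is bookkeeping about the two topologies, using crucially that the $\|\cdot\|_\Gamma$-ball is uniformly comparable to the $\|\cdot\|_\infty$-ball so that pointwise convergence on the dense subspace $c_{00}$ upgrades to \textsc{sot}-convergence on $c_0$.
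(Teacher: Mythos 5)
Your proof is correct and follows essentially the same route as the paper, which simply invokes Proposition~\ref{graph} to produce $\Gamma$ and then applies Lemma~\ref{lemme15}. The extra details you supply --- the converse inclusion that every pair $(\eps,\phi)$ yields a $\|\cdot\|_\Gamma$-isometry, and the two-way check that the identification is a homeomorphism for the strong operator topology --- are exactly the steps the paper leaves implicit, and your verifications of them are sound.
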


\begin{proof}
Use Proposition \ref{graph} to pick $\Gamma$ such that ${\rm Aut}(\Gamma)$ is topologically isomorphic to $G$. Then apply Lemma \ref{lemme15}.
\end{proof}





\section{Renormings of separable Banach spaces}

This section is devoted to displaying subgroups of $S_\infty$ on  spaces with a symmetric basis, such as $\ell_p, 1 \leqslant p<+\infty$, or with a symmetric decomposition, such as $L_p, 1 \leqslant p<+\infty$ or $C([0,1])$.
In order to do this, we shall extend Theorem \ref{FG} by proving that any representation of a group $G$ on a separable real space $X$ with an LUR norm,  as a closed group of isometries containing $-\Id$ and with sufficiently many  discrete orbits,  is actually a display.
We first recall a definition from \cite{FG} which is an extension of ideas and terminology of Bellenot \cite{Bel}.

 \subsection{Bellenot's renormings}
\begin{defi} Let $X$ be a real Banach space with norm ${\norm{\cdot}}$,  $G$ be a  group of isometries on $X$ containing $-\Id$ and let $(x_k)_{k \in K}$ be a possibly finite sequence of normalized vectors of $X$. Let
$\Lambda=(\lambda_k)_{k \in K}$ be such that $1/2<\lambda_k<1$ for all
$k \in K$. The {\em $\Lambda,G$-pimple norm at $(x_k)_k$ for ${\norm{\cdot}}$} is the equivalent norm on $X$ defined by
$$
\norm{y}_{\Lambda,G}=\inf\{\sum [[y_i]]_{\Lambda,G}\del  y=\sum y_i\},
$$
where $[[y]]_{\Lambda,G}=\lambda_k \norm{y},$
whenever $y \in \R\cdot gx_k$ for some $k \in K$ and $g \in G$, and
$[[y]]_{\Lambda,G}=\norm{y}$ otherwise.
\end{defi} 

In other words, the closed unit ball for $\norm{\cdot}_{\Lambda,G}$ is the closure of the convexification of the union of the closed unit ball for $\norm{\cdot}$ with line segments between $gx_k/\lambda_k$ and $-gx_k/\lambda_k$ for each $k \in K$ and $g \in G$.

When there is just one point $x_0$ associated to $\lambda_0$ and if $G=\{\Id\}$, then we just add two spikes to the ball on $x_0$ and $-x_0$. This is the original $\lambda_0$-pimple norm $\norm{\cdot}_{\lambda_0,x_0}$ at $x_0$ defined by S. Bellenot in \cite{Bel}. In the following, we shall denote the $\lambda_k$-pimple norm at $gx_k$ by $\norm{\cdot}_{\lambda_k,g}$ and relate the properties of the norm ${\norm{\cdot}}_{\Lambda,G}$ to the properties of each of the norms $\norm{\cdot}_{\lambda_k,g}$.

It may be observed that
$$(\inf_{k \in K}\lambda_k)\norm{\cdot} \leqslant \norm{\cdot}_{\Lambda,G} \leqslant \norm{\cdot},$$
so that $\norm{\cdot}_{\Lambda,G}$ is an equivalent norm on $X$, and that by
definition, 
any $g \in G$ remains an isometry in the norm $\norm{\cdot}_{\Lambda,G}$.

 As in \cite{Bel} we shall say that a normalized vector $y$ is {\em extremal} for a
norm $\norm{\cdot}$ if it is an extremal point of the closed unit ball for
$\norm{\cdot}$,  that is, if
whenever $\norm{y}=\norm{z}=1$ and $x$ belongs to the segment $[y,z]$, then
$y=x=z$. A norm is {\em strictly convex} at a point $x$ of the unit sphere
if $x$ is  extremal.  Note that, if  a norm is LUR at $x$, then it must be strictly convex at $x$.

 We recall a key result from \cite{Bel} (Proposition p. 90).

\begin{prop}\label{bellenot}(S. Bellenot \cite{Bel}) Let $(X,\norm{\cdot})$ be a real
  Banach space and let $x_0 \in X$ be normalized such that
\begin{itemize}
\item[(1)] $\norm{\cdot}$ is LUR at $x_0$, and 
\item[(2)] there exists $\epsilon>0$ such that if $\norm{y}=1$ and
$\norm{x_0-y} < \epsilon$, then ${\norm{\cdot}}$ is strictly convex in $y$.
\end{itemize}
Then given $\delta>0$, $b>0$ and $0<m<1$, there exists
a real number $1/2<\lambda_0<1$ such that whenever $\lambda_0 \leqslant \lambda <1$ and
$\norm{\cdot}_{\lambda}$ is the $\lambda$-pimple norm at $x_0$, then 
\begin{itemize}
\item [(3)] $m\norm{\cdot} \leqslant \norm{\cdot}_{\lambda} \leqslant \norm{\cdot}$,
\item [(4)] if $1=\norm{y}>\norm{y}_{\lambda}$ then $\norm{x_0-y} <\delta$
or $\norm{x_0+y}<\delta$,
\item [(5)] $x_{\lambda}=\lambda^{-1}x_0$ is the only isolated extremal point $x$  for $\norm{\cdot}_{\lambda}$ satisfying $\Norm{\frac x{\norm{x}}-x_0} <\epsilon$,
\item [(6)] if $w$ is a vector so that $x_\lambda$ and $x_\lambda+w$ are endpoints of a maximal line segment in the unit sphere of $\norm{\cdot}_\lambda$, then $b \geqslant \norm{w} \geqslant \lambda^{-1}-1$.
\end{itemize}
\end{prop}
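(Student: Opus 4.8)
The plan is to read off everything from the enlarged unit ball $B_\lambda$ (the closed unit ball of $\norm{\cdot}_\lambda$) and its polar. By the description of the pimple norm given above, $B_\lambda=\overline{\rm conv}\big(B_X\cup[-x_\lambda,x_\lambda]\big)$ with $x_\lambda=\lambda^{-1}x_0$, and polarizing,
$$
B_\lambda^\circ=B_{X^*}\cap\{x^*\colon |x^*(x_0)|\leqslant\lambda\},\qquad\text{so}\qquad \norm{y}_\lambda=\sup\{x^*(y)\colon x^*\in B_{X^*},\ |x^*(x_0)|\leqslant\lambda\}.
$$
Item (3) is then immediate from the inequalities $\lambda\norm{\cdot}\leqslant\norm{\cdot}_\lambda\leqslant\norm{\cdot}$ already noted, as soon as we demand $\lambda_0\geqslant m$; in everything that follows we will only further push $\lambda_0$ towards $1$.

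For (4), suppose $\norm{y}=1>\norm{y}_\lambda$. If some norming functional $x^*\in S_{X^*}$ of $y$ had $|x^*(x_0)|\leqslant\lambda$, it would lie in $B_\lambda^\circ$ and give $\norm{y}_\lambda\geqslant x^*(y)=1$; hence every norming functional of $y$ satisfies $|x^*(x_0)|>\lambda$. Fixing such an $x^*$ and choosing the sign appropriately, we get $\norm{y+x_0}\geqslant 1+\lambda$ or $\norm{y-x_0}\geqslant 1+\lambda$. Since $\norm{\cdot}$ is LUR at $x_0$, there is $\delta''>0$ with $\big(\norm{w}=1\ \&\ \norm{x_0+w}>2-\delta''\big)\saa\norm{x_0-w}<\delta$; imposing $\lambda_0>1-\delta''$ and applying this with $w=y$, respectively $w=-y$, yields $\norm{x_0-y}<\delta$ or $\norm{x_0+y}<\delta$.

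Item (6) uses the same LUR mechanism applied to the \emph{collar}
$$
\mathcal C_\lambda=\{z\in S_X\colon \e\,x^*\in S_{X^*}\ \text{with}\ x^*(z)=1\ \text{and}\ x^*(x_0)=\lambda\},
$$
which is precisely the set of non-apex endpoints of the maximal segments of $\partial B_\lambda$ issuing from $x_\lambda$ (a witness $x^*$ lies in $B_\lambda^\circ$ and has $x^*(x_\lambda)=1$, so it supports the whole segment $[z,x_\lambda]\subseteq\partial B_\lambda$; conversely any such maximal segment is supported by some $x^*$ with $x^*(x_\lambda)=1$, hence $x^*(x_0)=\lambda$ and $\norm{x^*}=1$, which exhibits its other endpoint as a member of $\mathcal C_\lambda$ lying on $S_X$). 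For $z\in\mathcal C_\lambda$ one has $\norm{z+x_0}\geqslant x^*(z)+x^*(x_0)=1+\lambda$, so, enlarging $\lambda_0$ once more by LUR at $x_0$, every $z\in\mathcal C_\lambda$ satisfies $\norm{z-x_0}\leqslant b-(\lambda_0^{-1}-1)$. Consequently, if $x_\lambda$ and $x_\lambda+w$ are the endpoints of such a maximal segment, then $u:=x_\lambda+w\in\mathcal C_\lambda\subseteq S_X$, whence $\norm{w}=\norm{u-x_\lambda}\leqslant\norm{u-x_0}+(\lambda^{-1}-1)\leqslant b$; the lower bound is just $\norm{w}=\norm{u-x_\lambda}\geqslant\norm{x_\lambda}-\norm{u}=\lambda^{-1}-1$.

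It remains to prove (5), which is the delicate point. First, $x_\lambda$ is an extreme point of $B_\lambda$: if $x_\lambda=\tfrac12(a+b)$ with $\norm{a}_\lambda,\norm{b}_\lambda\leqslant1$, then $\norm{a},\norm{b}\leqslant\lambda^{-1}$ while $\tfrac12(\norm{a}+\norm{b})\geqslant\norm{x_\lambda}=\lambda^{-1}$, forcing $\norm{a}=\norm{b}=\lambda^{-1}$; then $\lambda a,\lambda b\in S_X$ have midpoint $x_0$, and strict convexity of $\norm{\cdot}$ at $x_0$ (a consequence of (1)) gives $a=b=x_\lambda$. Second, $x_\lambda$ is isolated among extreme points of $B_\lambda$: a short inspection of the two spikes shows that every extreme point of $B_\lambda$ other than $\pm x_\lambda$ lies on $\partial B_X$, hence at distance $\geqslant\norm{x_\lambda}-1=\lambda^{-1}-1>0$ from $x_\lambda$, and $\norm{x_\lambda-(-x_\lambda)}=2\lambda^{-1}>\lambda^{-1}-1$ too. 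Finally, for uniqueness, let $x\neq x_\lambda$ be another isolated extreme point of $B_\lambda$ with $\norm{\tfrac{x}{\norm{x}}-x_0}<\epsilon$; by the previous sentence $x\in S_X$ with $\norm{x-x_0}<\epsilon$, so by hypothesis (2) the norm is strictly convex at $x$. After enlarging $\lambda_0$ so that (by LUR at $x_0$) the open collar lies in $B(x_0,\epsilon/2)\cup B(-x_0,\epsilon/2)$, one checks that $x$ lies on the relative boundary of $\mathcal C_\lambda$ in $S_X$, or outside $\mathcal C_\lambda$ altogether, and hence is a limit of strictly convex points of $S_X$ lying outside the collar — and such points survive as extreme points of $B_\lambda$, contradicting the isolation of $x$. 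The main obstacle is exactly this last check: that the only part of $\partial B_X$ swallowed by the two spikes is the relative interior of $\pm\mathcal C_\lambda$, equivalently that a strictly convex point of $S_X$ outside the collar is still extreme in $B_\lambda$. This is where the full force of (2), rather than just (1), enters, and it is handled with support functionals — on $[-x_\lambda,x_\lambda]$ any $x^*\in B_\lambda^\circ$ has absolute value at most $\lambda^{-1}|x^*(x_0)|\leqslant1$, with equality only on functionals witnessing membership in $\mathcal C_\lambda$, so a face of $B_\lambda$ meeting $S_X$ in a strictly convex point reduces to that point; the remaining bookkeeping, including the degenerate low-dimensional case, is routine.
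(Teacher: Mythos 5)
First, a remark on the comparison you were asked for: the paper does not prove this proposition at all — it is quoted from Bellenot's article \cite{Bel} (Proposition, p.~90) and used as a black box — so your argument can only be judged on its own terms. Your dual description $B_\lambda^\circ=B_{X^*}\cap\{x^*\del |x^*(x_0)|\leqslant\lambda\}$ of the pimple ball is correct and carries (3) and (4) cleanly. Item (6) is essentially right, but you assert rather than prove that the non-apex endpoint $u=x_\lambda+w$ of a maximal segment lies on $S_X$; this needs the observation that if $\norm{u}>1$ then $u$ is a relative interior point of some segment $[x_\lambda,v]$ with $v\in B_X$, along which the supporting functional is affine and equals $1$ at $x_\lambda$ and at $u$, hence at $v$, so the segment could be prolonged inside the sphere — contradicting maximality. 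The existence and isolation of $x_\lambda$ as an extreme point in (5) are also fine.

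The genuine gap is in the uniqueness half of (5). A strictly convex point $v\in S_X$ fails to survive as an extreme point of $B_\lambda$ exactly when it is \emph{swallowed} by the spike, i.e., when $v+s(v-x_\lambda)\in B_X$ for some $s>0$. Every norming functional of a swallowed point satisfies $x^*(x_0)\geqslant\lambda$, but the converse is false, and the swallowed set is not delimited by your collar $\mathcal C_\lambda$. Indeed, your key claim — that a strictly convex point of $S_X$ outside the collar is still extreme in $B_\lambda$ — fails already in Hilbert space: there the swallowed set is the open cap $\{v\in S_X\del \langle v,x_0\rangle>\lambda\}$, every point of which is smooth and strictly convex, lies outside $\mathcal C_\lambda=\{v\del\langle v,x_0\rangle=\lambda\}$, and yet is not extreme in $B_\lambda$. (In general, at a smooth point $v$ whose unique norming functional has $x^*(x_0)>\lambda$, Gateaux differentiability gives $\norm{v+s(v-x_\lambda)}=1-s(x^*(x_\lambda)-1)+o(s)<1$, so $v$ is swallowed.) Consequently the dichotomy ``on the relative boundary of $\mathcal C_\lambda$ or outside it'' neither exhausts the cases nor produces nearby surviving extreme points, and the step you defer to ``routine bookkeeping'' is precisely the missing content. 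What you actually need is that no unswallowed point of $S_X$ is isolated among unswallowed points, and this can be supplied by a different device: for $v\in S_X$ set $T(v)=\sup\{t\geqslant 0\del x_\lambda+t(v-x_\lambda)\in B_X\}\geqslant 1$ and let $\pi(v)=x_\lambda+T(v)(v-x_\lambda)$ be the exit point of the ray from $x_\lambda$ through $v$. Then $\pi(v)\in S_X$ is never swallowed, $\pi(v)\neq x$ whenever $v\notin[x_\lambda,x]$, and, since $x$ itself is unswallowed so that $T(x)=1$, closedness and boundedness of $B_X$ give $T(v)\til 1$ and hence $\pi(v)\til x$ as $v\til x$. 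For $v$ close enough to $x$ the points $\pi(v)$ lie within $\epsilon$ of $x_0$, so hypothesis (2) makes them extreme in $B_X$ and therefore in $B_\lambda$, contradicting the isolation of $x$. With this replacement your proof of (5), and hence of the proposition, closes.
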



 Our objective is to generalize this result to $(\Lambda,G)$-norms. This was done in \cite{FG} when $G$ was countable and we shall observe here that the result may be extended to certain uncountable groups.

In the following, we let $B$ denote the  closed unit ball for $\norm{\cdot}$,
$B_\Lambda^G$ denote the closed unit ball for the $\Lambda,G$-norm at
$(x_k)_k$, $B_k^g$ denote the closed unit ball for the $\lambda_k$-norm $\norm{\cdot}_{\lambda_k,g}$ at $gx_k$ and
set 
$$
B_0=\bigcup_{k\in K}\bigcup_{g\in G}B_{k}^g.
$$ 
Clearly, $B \subseteq B_0 \subseteq B_\Lambda^G$. 

 If $\Lambda=(\lambda_k)_{k \in K}$,
$\Lambda'=(\lambda_k^{\prime})_{k \in K}$, and $c,d \in \R$, we
  write
$\Lambda < \Lambda'$ to mean $\lambda_k < \lambda_k^{\prime}$ for all
$k \in K$, $c < \Lambda$ to mean $c < \lambda_k$ for all $k \in K$,
and $\Lambda <d$ to mean $\lambda_k<d$ for all $k \in K$. 

We shall first show in Lemma \ref{LemmaGbellenot} that for  $\Lambda$ close enough to $1$,
 $B_\Lambda^G$ is actually equal to $B_0$.
This implies that each spike associated to each point of the form $gx_k$ is sufficiently small and separated from other spikes for us  to be able to apply the results of Proposition \ref{bellenot} independently for each point.

\begin{lemme}\label{LemmaGbellenot}
Let $(X,\norm{\cdot})$ be a real Banach space, $-\Id\in G\leqslant {\rm Isom}(X,\norm\cdot)$ and let $(x_k)_{k \in K}$
be a  finite or infinite sequence of normalized vectors of $X$. Assume also that the following conditions hold.
\begin{itemize}
\item[(1)] For each $k \in K$ there is $\eps_k>0$ such that $\norm{\cdot}$ is LUR at $x_k$ and strictly convex in any $y$ such that $\norm{y-x_k} \leqslant \epsilon_k$. 
\item[(2)] For each $k \in K$ there exists $c_k>0$ such that for all $g \in G$ and  $j \in K$, either $\norm{x_j-gx_k} \geqslant c_k$ or $x_j=gx_k$.
\item[(3)] For each $k \in K$, $x_k \notin \bigcup_{j<k}Gx_j$.
\end{itemize}
Then  there exists
$\Lambda_0=(\lambda_{0k})_k$, with $1/2 < \Lambda_0$
 such that whenever $\Lambda=(\lambda_k)_k$ satisfies
 $\Lambda_0 < \Lambda < 1$, it follows that

\begin{itemize}
\item[(a)] whenever $x \in B_k^g \setminus B$ and $y \in B_l^h \setminus B$
  with
$gx_k \neq hx_l$, then $\norm{x-y} \geqslant c_{\min(k,l)}/3$,

\item[(b)] $B_\Lambda^G=B_0$,

\item[(c)] 
$\norm{\cdot}_{\Lambda,G}=\inf_{k \in K, g \in G}\norm{\cdot}_{\lambda_k,g}$.

\item[(d)] whenever $\norm{x}_{\Lambda,G}<\norm{x}$, there exists $(k,g)$ such that
$\norm{x}_{\Lambda,G}=\norm{x}_{\lambda_k,g}$, $k$ and the pair $\{gx_k,-gx_k\}$ are uniquely determined by this property, and for any $(l,h)$ such that $gx_k \neq \pm hx_l$, 
$\norm{x}_{\lambda_l,h}=\norm{x}$.

\end{itemize}

Furthermore for each $k$, $\lambda_{0k}$  depends only on 
$x_i, c_i, \epsilon_i$, $1 \leqslant i \leqslant k$.

\end{lemme}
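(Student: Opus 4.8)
\noindent\emph{Proof proposal.}

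\medskip

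The plan is to bootstrap from Bellenot's single--pimple result, Proposition~\ref{bellenot}, applied simultaneously at all of the points $gx_k$, once we have drawn from the hypotheses the one structural consequence that makes this legitimate: \emph{the relevant orbits are uniformly discrete and mutually separated}. Indeed, applying an isometry $g\inv\in G$ to inequality (2) with $j=k$ shows that distinct points of $Gx_k$ lie at mutual distance $\geq c_k$; sliding the orbit $Gx_k$ by $h\inv$ in (2) and using (3) shows that $\norm{gx_k-hx_l}\geq c_{\min(k,l)}$ whenever $gx_k\neq\pm hx_l$, and, since $-\Id\in G$, also $\norm{gx_k+hx_l}\geq c_{\min(k,l)}$; in particular orbits with distinct indices are disjoint. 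As $X$ is separable, each $Gx_k$ is then countable, and this is precisely why the possibly uncountable size of $G$ will cause no trouble. Next I would fix the thresholds $\lambda_{0k}$ by recursion on $k$: with $x_i,c_i,\epsilon_i$ ($i\leq k$) available, apply Proposition~\ref{bellenot} at $x_k$ (legitimate by (1), with strict--convexity input $\epsilon_k$) with its auxiliary parameters --- in particular the $\delta$ of conclusion (4), which we call $\delta_k$ --- chosen small, and then decrease $\lambda_{0k}$ further so that $\tau_k:=\lambda_{0k}\inv-1$ is less than a small fixed fraction of $\min_{i\leq k}c_i$, less than $\tfrac12$ times the smallest over $i\leq k$ of the moduli of local uniform rotundity of $\norm\cdot$ at $x_i$ evaluated at $c_i$, and with $\tau_k\to 0$; every such requirement involves only $x_i,c_i,\epsilon_i$ for $i\leq k$. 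Finally, since $G$ acts by $\norm\cdot$--isometries, $\norm{\cdot}_{\lambda_k,g}$ is the push--forward of $\norm{\cdot}_{\lambda_k,\Id}$ by $g$, so conclusions (3)--(6) of Proposition~\ref{bellenot} hold at every $gx_k$ with the very same constants; this transport is the only place the structure of $G$ enters.

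With the set--up fixed, (a) is a short estimate. If $x\in B_k^g\setminus B$ then $x/\norm x$ has $\norm\cdot$--norm $1$ and a strictly smaller $\norm{\cdot}_{\lambda_k,g}$--norm, so by conclusion (4) it lies within $\delta_k$ of one of $\pm gx_k$, while $B_k^g\subseteq\lambda_k\inv B$ gives $\norm x\leq 1+\tau_k$; hence $x$ lies within $\rho_k:=\delta_k+\tau_k$ of one of $\pm gx_k$, and likewise $y$ of one of $\pm hx_l$. When $gx_k\neq\pm hx_l$ the four points $\pm gx_k,\pm hx_l$ are pairwise $\geq c_{\min(k,l)}$--apart by the first paragraph, while $\rho_k$ and $\rho_l$ are each $\leq\tfrac16 c_{\min(k,l)}$ by the calibration; so $\norm{x-y}\geq c_{\min(k,l)}-\rho_k-\rho_l\geq c_{\min(k,l)}/3$. (Statement (a) is to be read with $gx_k\neq\pm hx_l$; when $gx_k=-hx_l$ the two pimple balls coincide and there is nothing to assert.) Granting (b), conclusions (c) and (d) are then formal. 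For (c): $B_\Lambda^G\supseteq B_k^g$ gives $\norm\cdot_{\Lambda,G}\leq\inf_{k,g}\norm\cdot_{\lambda_k,g}$, and since $y/\norm y_{\Lambda,G}\in B_\Lambda^G=B_0=\bigcup_{k,g}B_k^g$ lies in some $B_k^g$, the reverse inequality holds as well. For (d): if $\norm{x}_{\Lambda,G}<\norm x$ then $x/\norm{x}_{\Lambda,G}$ lies in $B_0\setminus B$, hence in some $B_k^g\setminus B$; by (a) the index $k$ and the pair $\{gx_k,-gx_k\}$ are uniquely determined, $\norm{x}_{\Lambda,G}=\norm{x}_{\lambda_k,g}$ is automatic, and a strict inequality $\norm{x}_{\lambda_l,h}<\norm x$ with $gx_k\neq\pm hx_l$ would push a positive multiple of $x/\norm{x}_{\Lambda,G}$ into $B_l^h\setminus B$, forcing $\pm gx_k$ and $\pm hx_l$ to be impossibly close.

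The heart of the matter is (b), the identity $B_\Lambda^G=B_0$. One inclusion is automatic: $0\in B$ makes every $B_k^g$ contained in $\overline{\rm conv}\big(B\cup\bigcup_{k,g}[-gx_k/\lambda_k,\,gx_k/\lambda_k]\big)=B_\Lambda^G$, so $B_0\subseteq B_\Lambda^G$; it remains to see that $B_0$ is convex and closed. For convexity I would prove ${\rm conv}\big(B\cup\{\text{tips }\pm gx_k/\lambda_k\}\big)\subseteq B_0$ by induction on the number $n$ of distinct tips appearing in a representation $z=t_0b+\sum_{m=1}^n t_m\nu_m$. The cases $n\leq 1$, and the case in which all tips lie on a single ball $B_k^g$, are immediate; otherwise pick two tips $\nu_i,\nu_j$ lying on different pimple balls, so the associated unit vectors $d_i,d_j$ satisfy $d_i\neq\pm d_j$ and hence $\norm{d_i\pm d_j}\geq c_{\min(k_i,k_j)}$. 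Writing $\alpha\geq\tfrac12$ for the larger of the weights in the ratio $t_i:t_j$, the combination $\alpha\nu_i+(1-\alpha)\nu_j$ equals $(1-s)b'+s\nu_i$ with $s=2\alpha-1$ and --- this is the crucial point --- the \emph{symmetric} midpoint $b'=\tfrac12(\nu_i+\nu_j)$; since local uniform rotundity at the lower--indexed of $d_i,d_j$ gives $\norm{d_i+d_j}\leq 2-\beta$ for the corresponding modulus $\beta$ at $c_{\min(k_i,k_j)}$, and $\tau\leq\tfrac12\beta$ by the calibration of the $\tau_k$, a one--line estimate gives $\norm{b'}\leq 1$, i.e.\ $b'\in B$. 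Substituting this back and merging the two $B$--terms rewrites $z$ as a convex combination of a single point of $B$ and at most $n-1$ distinct tips, which closes the induction. For closedness, let $z_n\to z$ with $z_n\in B_0$ and $\norm z>1$; then $\norm{z_n}\leq 1+\tau_{k_n}$ keeps $\tau_{k_n}$ bounded below, so (as $\tau_k\to 0$) only finitely many indices $k_n$ occur and we may assume $k_n\equiv k$; by the estimate of (a), $g_nx_k$ lies within $\rho_k<c_k/2$ of $\pm z_n\to\pm z$, so the $g_nx_k$ are eventually within $c_k$ of one another, hence eventually equal to some fixed $g_*x_k$ by uniform discreteness of $Gx_k$; therefore $z\in B_k^{g_*}\subseteq B_0$, a closed set.

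The step I expect to be the real obstacle is the convexity half of (b): the estimate $\norm{b'}\leq 1$ must be robust against tips carrying arbitrarily small weight, and it is exactly this that forces both the symmetric choice $b'=\tfrac12(\nu_i+\nu_j)$ and the recursive calibration of $\lambda_{0k}$ against the $c_i$ and the rotundity moduli for $i\leq k$ only --- a constraint one must check never looks ahead to indices larger than $k$. The remaining work is careful but routine bookkeeping; the one genuinely new ingredient compared with the countable case of \cite{FG} is the uniform discreteness of the orbits $Gx_k$, which is precisely what makes $B_0$ closed.
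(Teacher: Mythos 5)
Your proof is correct and, in most respects, coincides with the paper's: the recursive calibration of $\lambda_{0k}$ against $c_i$, $\epsilon_i$ and the LUR moduli for $i\leqslant k$ (which is what makes the final dependence claim work), the spike-localization estimate behind (a), the closedness argument for $B_0$, and the deductions of (c) and (d) are all essentially the same, and your reading of (a) under the convention $gx_k\neq\pm hx_l$ (equivalently $B_k^g\neq B_l^h$) is the one the paper itself adopts. The one step you genuinely reorganize is the convexity half of (b): the paper shows by contradiction that the midpoint of any two points $x\in B_k^g\setminus B$ and $y\in B_l^h\setminus B$ with $B_k^g\neq B_l^h$ already lands in $B$, via the LUR estimate $\Norm{\frac{gx_k+hx_l}{2}}\leqslant\lambda(x_k,c_k)$, and then upgrades midpoint-convexity of the closed set $B_0$ to convexity; you instead prove ${\rm conv}(B\cup\{\text{tips}\})\subseteq B_0$ directly by induction on the number of distinct tips, merging two tips from distinct spikes through their symmetric midpoint, which the very same LUR estimate places inside $B$. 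Your version handles arbitrary convex combinations without invoking the closed-plus-midpoint-convex principle, at the cost of some extra bookkeeping in the induction; the key inequality and the calibration $\lambda_{0k}\inv-1\leqslant\frac12\min_{i\leqslant k}\beta(x_i,c_i)$ are identical in both, so neither variant looks ahead to indices larger than $k$. Two cosmetic slips that do not affect the argument: you say \emph{decrease} $\lambda_{0k}$ where you mean increase it toward $1$ so that $\tau_k$ shrinks, and the separability of $X$ invoked in your first paragraph is not a hypothesis of the lemma (nor is it used anywhere essential).
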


\begin{proof}
This result was proved in \cite{FG}, under some slightly stronger assumption on the convexity of the norm, and under the stronger condition that $\norm{x_j-gx_k} \geqslant c_k$ unless $j=k$ and $g= \Id$, from which  it followed  that $B_k^g=B_l^h$ if and only if $k=l$ and $g=\pm h$.  It was then proved that the spikes associated to the points $gx_k$ were sufficiently distant from each other. In the proof, the condition $(k,g) \neq (l,\pm h)$ is sometimes considered, but what really matters for is that $B_k^g \neq B_l^h$.
Observe that the two balls $B_k^g$ and $B_l^h$ are equal if and only if $gx_k=\pm hx_l$ (which implies that $k=l$ by condition (3)).

 Note that by (1), $\norm{\cdot}$ is LUR at $x_k$ for each $k$, so Proposition
\ref{bellenot} (1) is satisfied for $x_0=x_k$. By (1) again,
$\norm{\cdot}$ is strictly convex in a neighborhood of $x_k$, so  Proposition \ref{bellenot}
(2) applies in $(x_k)$ for $\epsilon=\epsilon_k$.
 We may assume that
  $\epsilon_k \leqslant c_k/2$, and fix a decreasing sequence  $(\delta_k)_k$ such
  that for all $k \geqslant 1$, $\delta_k \leqslant c_k/4$ and
$\frac{4\delta_k}{3} \leqslant 1-\lambda(x_k,c_k)$, where $\lambda(x_k,\cdot)$ is the LUR function
associated to the norm $\norm{\cdot}$ in $x_k$, i.e.,
$$
\a \epsilon>0\;\; \a y\in S_X\; \big ( \norm{x_k-y} \geqslant \epsilon\saa \NORM{\frac{x_k+y}2} \leqslant \lambda(x_k,\eps)\big).
$$

For each $k$, we let
 $\lambda_{0k}$ be the real $\lambda_0$ associated by Proposition \ref{bellenot} to $x_0=x_k$, 
$\epsilon=\epsilon_k$, $\delta=\delta_k$, $b=1$ and $m=1/2$.
Up to replacing each $\lambda_{0k}$ by a larger number in $]1/2,1[$, we may
 assume 
that  $\lambda_{0k}^{-1}-1 \leqslant \delta_k/3$ for all
$k \in K$ and that $\lim_{k \rightarrow \infty} \lambda_{0k}=1$ if $K$ is
infinite.

We let $\Lambda=(\lambda_k)_k$ be such that $\Lambda_0 < \Lambda
< 1$.

We first prove (a).
 Whenever
 $x \in B_k^g \setminus B$ 
we have, by Proposition \ref{bellenot} (4), that
$\norm{z-gx_k} < \delta_k$ or $\norm{z+gx_k} <\delta_k$, where $z=x/\norm{x}$.
 Up to redefining $g$
 as $-g$ if necessary we may assume that the first holds.
Then
$$
\norm{x-gx_k} \leqslant \norm{z-x}+\norm{z-gx_k} <\norm{x}-1+\delta_k
\leqslant \lambda_k^{-1}-1+\delta_k \leqslant \frac{4\delta_k}{3}.
$$
Likewise if $y \in B_l^h \setminus B$ then up to redefining $h$ as $-h$,
$$
\norm{y-hx_l} <\frac{4\delta_l}{3}.
$$
If now $gx_k \neq hx_l$ and say $k \leqslant l$, we have,
that 
$$
\norm{x-y} \geqslant \norm{gx_k- hx_l}-\norm{x-gx_k}-\norm{y-hx_l} \geqslant
c_k-\frac{4}{3\delta_k}-\frac{4}{3\delta_l},
$$
so since $\delta_l \leqslant \delta_k$,
$$
\norm{x-y}  \geqslant c_k-\frac{8\delta_k}{3} \geqslant c_k/3.
$$
Therefore, (a) is proved.

We shall now prove (b).
First we observe that $B_0$ is closed. Indeed, suppose that  $x$ is the limit of a
convergent sequence $(x_n)$ in $B_0$, and let $k_n, g_n$ be such that
$x_n \in B_{k_n}^{g_n}$. We claim that $x \in B_0$. For if $x_n \in B$ for infinitely many $n$, then $x \in B$
and we are done, so we may assume that $x_n \in B_{k_n}^{g_n} \setminus B$ for
each $n$. If $k_n$ is bounded, then we can assume that $k_n$ is constantly
equal to some $k$ and that $\norm{x_m-x_n} <c_k/3$ for all $n, m$. But then by
(a), $B_k^{g_n}=B_k^{g_m}$ for all $n,m$, so $x$ also belongs to $B_k^{g_n}$
for any choice of $n$ and therefore to $B_0$, and we are done. So we
may assume that $k_n$  converges to $\infty$. Since $\lambda_{0 k_n}$
converges to $1$, $\lambda_{k_n}$ also converges to $1$, and since $\norm{x_n}
\leqslant 1/\lambda_{k_n}$ for each $n$, $\norm{x} \leqslant 1$. Therefore, $x \in B
\subseteq B_0$, which proves the claim.  Finally, $B_0$ is closed.

Next we observe that $B_0$ is convex.
Assuming towards a contradiction that $x,y \in B_0$ and $\frac{x+y}{2} \notin
B_0$, let
$(k,g)$ and $(l,h)$ be such that $x \in B_k^g$ and $y \in B_l^h$, and without
loss of generality assume that $k \leqslant l$. By convexity
of $B_k^g$ and $B_l^h$, these two balls are different, otherwise
$\frac{x+y}{2}$ would belong to either of them and therefore to $B_0$. This means that $gx_k \neq \pm hx_l$.
 Furthermore, $x \in B_k^g \setminus B$, otherwise $x \in B \subseteq B_l^h$ and
$\frac{x+y}{2} \in B_l^h \subseteq B_0$. 
In other words, 
$\norm{x}_{\lambda_k,g}<\norm{x}$. Likewise,
$\norm{y}_{\lambda_l,h}<\norm{y}$.
Therefore, by Proposition \ref{bellenot} (4) applied to $x/\norm{x}$ for the $\lambda_k$-pimple norm at $gx_k$, and up to replacing $g$ by $-g$ if necessary,
$\Norm{gx_k-\frac x{\norm{x}}} < \delta_k$. 
Then 
$$
\norm{gx_k-x} \leqslant \NORM{gx_k-\frac{x}{\norm{x}}}+\NORM{x-\frac{x}{\norm{x}}} \leqslant
\delta_k+\lambda_k^{-1}-1 \leqslant \frac{4\delta_k}{3}.
$$

Likewise, $\norm{hx_l-y} <\frac{4\delta_l}{3}$ and so 
$$
\NORM{\frac{x+y}{2}-\frac{gx_k+hx_l}{2}}\leqslant \frac{2(\delta_k+\delta_l)}{3} \leqslant \frac{4\delta_k}{3}.
$$
Since $\norm{gx_k-hx_l} \geqslant c_k$ by (2), it follows by the  LUR-property of $\norm{\cdot}$ in $gx_k$ that
$$
\NORM{\frac{gx_k+hx_l}{2}} \leqslant \lambda(gx_k,c_k)=\lambda(x_k,c_k),
$$
and
$$
\NORM{\frac{x+y}{2}} \leqslant \frac{4\delta_k}{3}+\lambda(x_k,c_k)\leqslant 1,
$$
a contradiction,
since $\frac{x+y}{2}$ does not belong to $B_0$ and therefore neither to $B$.
This contradiction proves that $B_0$ is convex.

 Finally we have proved that $B_0$ is closed convex. Since it contains $B$ and each segment
$[-gx_k/\lambda_k,gx_k/\lambda_k]$, it therefore contains $B_{\Lambda}^G$, and
since also $B_0$ is included in $B_{\Lambda}^G$,  it follows that
$B_0=B_{\Lambda}^G$, that is, (b) is proved.

The equality in (c), 
$$
\norm{\cdot}_{\Lambda,G}=\inf_{k \in K,g \in G}\norm{\cdot}_{\lambda_k,g},
$$
follows immediately from (b).

To prove (d), let $x$ be such that $\norm{x}_{\Lambda,G} < \norm{x}$. Then
by (c) there exists $\lambda_k,g$ such that $\norm{x}_{\lambda_k,g}<\norm{x}$.
Therefore, $z=x/\norm{x}_{\lambda_k,g} \in B_k^g \setminus B$.
Since
in  (a), the real $c_{\min(k,l)}$ is  positive, there exists no $B_l^h$ with $hx_l \neq \pm gx_k$ such that $z \in B_l^h \setminus B$. In other words,
$z \notin B_l^h$ for $gx_k \neq \pm hx_l$. 

If we had
that $\norm{x}_{\lambda_l,h}<\norm{x}$ for some $hx_l \neq \pm gx_k$ , then
$z'=x/\norm{x}_{\lambda_l,h}$ would belong to
 $B_l^h \setminus B$. If $\norm{x}_{\lambda_l,h} \leqslant
\norm{x}_{\lambda_k,g}$ then by convexity of $B_{l}^h$, $z \in
B_{l}^h$ and so $z \in B_{l}^h \setminus B$, a contradiction.
If $\norm{x}_{\lambda_l,h} \geqslant
\norm{x}_{\lambda_k,g}$, then we obtain a similar contradiction using $z'$.
Therefore $\norm{x}_{\lambda_l,h} \geqslant \norm{x}$. Finally we have proved that
$\norm{x}_{\lambda_l,h}<\norm{x}$ only if $l=k$ and $hx_l=\pm gx_k$. From (c) we therefore deduce that
$\norm{x}_{\Lambda,G}=\norm{x}_{\lambda_k,g}$.
This concludes the proof of (d).
 \end{proof}

\begin{prop}\label{Gbellenot}
Let $(X,\norm{\cdot})$ be a real Banach space, $-\Id\in G\leqslant {\rm Isom}(X,\norm\cdot)$ and let $(x_k)_{k \in K}$
be a  finite or infinite sequence of normalized vectors of $X$. Assume that the following conditions are satisfied.
\begin{itemize}
\item[(1)] For each $k \in K$ there is $\eps_k>0$ such that $\norm{\cdot}$ is LUR at $x_k$ and strictly convex in any $y$ such that $\norm{y-x_k} \leqslant \epsilon_k$.
\item[(2)] For all $k \in K$ there is $c_k>0$ such that for any $g \in G$ and $j \in K$, either $\norm{x_j-gx_k} \geqslant c_k$ or $x_j=gx_k$.
\item[(3)] For each $k \in K$, $x_k \notin \bigcup_{j<k}Gx_j$.
\end{itemize}
Then given  $(\delta_k)_k>0$, $(b_k)_k>0$ and $0<m<1$, there exists
$\Lambda_0=(\lambda_{0k})_k$, with $1/2 < \Lambda_0$
 such that whenever $\Lambda=(\lambda_k)_k$  satisfies
 $\Lambda_0 < \Lambda < 1$,
we have

\begin{itemize}
\item[(3')] $m\norm{\cdot} \leqslant \norm{\cdot}_{\Lambda,G} \leqslant \norm{\cdot}$,
\item[(4')] if $1=\norm{y}>\norm{y}_{\Lambda,G}$ then  there are  $g \in G$ and $k \in K$ such that $\norm{gx_k-y} <\delta_k$,
\item[(5')] $x_{k,\lambda}=\lambda_k^{-1}x_k$ is the only isolated extremal point $x$ of $\norm{\cdot}_{\Lambda,G}$ satisfying $\Norm{\frac x{\norm{x}}-x_k} <c_k/2$,
\item[(6')] if $w$ is a vector so that $x_{k,\lambda}$ and $x_{k,\lambda}+w$ are endpoints of a maximal line segment in the unit sphere of $\norm{\cdot}_{\Lambda,G}$, then $b_k \geqslant \norm{w} \geqslant \lambda_k^{-1}-1$.
\end{itemize}

Furthermore for each $k$, $\lambda_{0k}$  depends only on 
$m$ and $x_i$, $c_i$, $\delta_i$, $b_i$, $1 \leqslant i \leqslant k$.
\end{prop}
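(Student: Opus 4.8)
The plan is to reduce Proposition \ref{Gbellenot} to Bellenot's Proposition \ref{bellenot} applied pointwise at each $x_k$, using Lemma \ref{LemmaGbellenot} to guarantee that the various spikes do not interfere with one another. Concretely, for each $k \in K$ choose $\delta_k' = \min(\delta_k, c_k/4)$, fix a decreasing sequence of such parameters as in the proof of Lemma \ref{LemmaGbellenot}, and apply Proposition \ref{bellenot} with $x_0 = x_k$, $\epsilon = \epsilon_k$, $\delta = \delta_k'$, $b = b_k$ and the given $m$ to produce a real $\lambda_{0k} \in \,]1/2,1[$. Enlarging each $\lambda_{0k}$ if necessary, arrange that $\lambda_{0k}^{-1} - 1 \leqslant \delta_k'/3$ for all $k$ and that $\lambda_{0k} \to 1$ when $K$ is infinite; this is exactly the hypothesis needed to invoke Lemma \ref{LemmaGbellenot}. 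Note that by construction $\lambda_{0k}$ depends only on $m$ and on $x_i, c_i, \delta_i, b_i$ for $i \leqslant k$, as required in the final clause. From now on fix $\Lambda$ with $\Lambda_0 < \Lambda < 1$, so that conclusions (a)--(d) of Lemma \ref{LemmaGbellenot} are available; the key structural fact we will repeatedly use is (c), that $\norm{\cdot}_{\Lambda,G} = \inf_{k,g}\norm{\cdot}_{\lambda_k,g}$, together with (d), that below $\norm{\cdot}$ this infimum is attained at a unique pair $\{gx_k,-gx_k\}$.

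Each of (3')--(6') is then obtained by transporting the corresponding clause of Proposition \ref{bellenot}. For (3'): the left inequality $m\norm{\cdot} \leqslant \norm{\cdot}_{\Lambda,G}$ follows since $\norm{\cdot}_{\Lambda,G} = \inf_{k,g}\norm{\cdot}_{\lambda_k,g}$ and each $\norm{\cdot}_{\lambda_k,g} \geqslant m\norm{\cdot}$ by Proposition \ref{bellenot}(3) (the $\lambda_k$-pimple norm at $gx_k$ is isometric via $g$ to the $\lambda_k$-pimple norm at $x_k$, so Bellenot's estimates apply verbatim), and the right inequality is the general observation $\norm{\cdot}_{\Lambda,G} \leqslant \norm{\cdot}$ recorded after the definition. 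For (4'): if $1 = \norm{y} > \norm{y}_{\Lambda,G}$, then by (c) there are $k,g$ with $\norm{y}_{\lambda_k,g} < \norm{y} = 1$, and Proposition \ref{bellenot}(4) applied to the $\lambda_k$-pimple norm at $gx_k$ gives $\norm{gx_k - y} < \delta_k' \leqslant \delta_k$ or $\norm{gx_k + y} < \delta_k$; replacing $g$ by $-g \in G$ (legitimate since $-\Id \in G$) handles the second case. For (5') and (6'): the point $x_{k,\lambda} = \lambda_k^{-1}x_k$ lies on the spike added at $x_k$, and by (b) the unit ball $B_\Lambda^G$ coincides near $x_{k,\lambda}$ with the ball $B_k^{\Id}$ of the single $\lambda_k$-pimple norm at $x_k$ — this is because, by clause (a), any point of $B_l^h \setminus B$ with $hx_l \neq \pm x_k$ is at distance $\geqslant c_{\min(k,l)}/3$ from $x_{k,\lambda}$, so a small enough neighbourhood of $x_{k,\lambda}$ sees only $B_k^{\Id}$ (and $B$). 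Hence the local extremal-point structure and maximal line segments through $x_{k,\lambda}$ in $\norm{\cdot}_{\Lambda,G}$ are literally those described by Proposition \ref{bellenot}(5) and (6) for the single pimple norm, giving (5') and (6') directly, with the normalization radius $c_k/2 \geqslant \epsilon_k$ chosen so as to stay inside the region controlled by (a).

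The main obstacle is the bookkeeping in (5') and (6'): one must check that the isolated extremal points and maximal segments of the global norm $\norm{\cdot}_{\Lambda,G}$ near $x_{k,\lambda}$ are not created or destroyed by the presence of the other spikes $B_l^h$. This is precisely what clause (a) of Lemma \ref{LemmaGbellenot} buys us — a uniform separation $c_{\min(k,l)}/3$ between distinct spikes' ``new'' parts — so that on a ball of radius, say, $c_k/6$ around $x_{k,\lambda}$ the identity $B_\Lambda^G \cap U = B_k^{\Id} \cap U$ holds; the verification that this radius comfortably contains the relevant extremal point and the endpoints of the maximal segment (whose length is $\leqslant b_k$, controlled by Proposition \ref{bellenot}(6), after possibly further shrinking $\lambda_{0k}$ so that $b_k$ and $\lambda_k^{-1}-1$ are small relative to $c_k$) is the one genuinely fiddly point. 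Everything else is a routine unwinding of the definitions and of Bellenot's original proposition, now applied simultaneously and $G$-equivariantly at each point of the (possibly uncountable) orbit system $\{gx_k : g \in G, k \in K\}$.
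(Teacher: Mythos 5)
Your proposal follows essentially the same route as the paper: apply Bellenot's Proposition~\ref{bellenot} at each $x_k$ to obtain $\lambda_{0k}$, enlarge so that Lemma~\ref{LemmaGbellenot}(a)--(d) hold, deduce (3') and (4') from (c)/(d) plus Bellenot (3)/(4), and obtain (5')/(6') by using the spike separation in (a) to identify $\norm{\cdot}_{\Lambda,G}$ with the single pimple norm $\norm{\cdot}_{\lambda_k,\Id}$ near $x_{k,\lambda}$. The only points to tighten are that the separation from (a) is $c_{\min(k,l)}/3$, so the safe radius around $x_{k,\lambda}$ is governed by $\min_{i\leqslant k}c_i$ rather than $c_k$ alone, and that in (6') one must also invoke the strict convexity of $\norm\cdot$ near $x_k$ to rule out a maximal segment escaping into the unperturbed part of the sphere — both of which are handled in the paper's proof along the lines you indicate.
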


\begin{proof} Once again the proof is not too different from the one in \cite{FG}.

  Fix $G$, $(x_k)_{k \in K}$ and $(\delta_k)_k>0$, $(b_k)_k>0$ and $0<m<1$ as in
 the hypotheses.  We may again assume that $(\delta_k)_k$ is decreasing
 and that for all $k \geqslant 1$, $\delta_k \leqslant c_k/4$, $\delta_k \leqslant \epsilon_k/2$ and
$\frac{3\delta_k}{2} \leqslant 1-\lambda(x_k,c_k)$, and we may also assume that $\epsilon_k \leqslant c_k/2$ and $\delta_k \leqslant \min_{i \leqslant k}c_i/4$.

 Note that as in Lemma \ref{LemmaGbellenot}, by (1),  Proposition
\ref{bellenot} (1) is satisfied for $x_0=x_k$ and Proposition \ref{bellenot} (2) applies in $(x_k)$ for  $\epsilon=\epsilon_k$.

Let therefore, for each $k$,  $\lambda_{0k}$ be the $\lambda_0$ given by Proposition
 \ref{bellenot} for 
 $x_0=x_k$, 
$\epsilon=\epsilon_k$, $\delta=\delta_k$, $b=b_k$ and $m$.
Up to replacing each $\lambda_{0k}$ by a larger number in $]1/2,1[$, we may
 also assume that  (a) to (d) of Lemma \ref{LemmaGbellenot} are satisfied
 whenever  $\Lambda_0 < \Lambda
< 1$.

We now fix some $\Lambda$ such that $\Lambda_0<\Lambda<1$ and verify (3') to
(6').

 Affirmation (3') is obvious from Proposition \ref{bellenot} (3) for each $(\lambda_k,g)$, that is
$$m\norm{\cdot} \leqslant \norm{\cdot}_{\lambda_k,g} \leqslant \norm{\cdot},$$ 
and from Lemma \ref{LemmaGbellenot} (c), that is
$$\norm{\cdot}_{\Lambda,G}=\inf_{k \in K, g \in G}\norm{\cdot}_{\lambda_k,g}.$$

For (4') assume $1=\norm{y}>\norm{y}_{\Lambda,G}$. Then by Lemma
\ref{LemmaGbellenot} (d), there exist $g,k$ such that
$1=\norm{y}>\norm{y}_{\lambda_k,g}$, so from Proposition \ref{bellenot} (4) applied for $\norm{\cdot}_{\lambda_k,g}$,
$\norm{gx_k-y}<\delta_k$ or $\norm{-gx_k-y}<\delta_k$. This proves (4').

To prove (5') we note that if $\Norm{\frac x{\norm{x}}-x_k}<c_k/2$ then whenever
$gx_k \neq \pm hx_l$,
$$
\NORM{\frac x{\norm{x}}-gx_l} > \norm{gx_l-x_k}-c_k/2 \geqslant c_k/2  \geqslant \delta_k,
$$
and likewise 
$$
\NORM{\frac x{\norm{x}}+gx_l} \geqslant \delta_k.
$$
Applying Proposition \ref{bellenot} (4) to $y=x/\norm{x}$ and
$\norm{\cdot}_{\lambda_l,h}$ we deduce that 
$$
\norm{x}=\norm{x}_{\lambda_l,h}
$$
whenever $B_l^h \neq B_k^g$. From Lemma \ref{LemmaGbellenot} (c) 
it follows that 
$$
\norm{x}_{\Lambda,G}=\norm{x}_{\lambda_k,g}.
$$
Thus,  if  $\Norm{\frac x{\norm{x}}-x_k}<\epsilon_k$,  then $x$ is an isolated extremal point for
$\norm{\cdot}_{\Lambda,G}$ if and only if it is an isolated extremal   point for
$\norm{\cdot}_{\lambda_k,g}$, which, by an application of Proposition \ref{bellenot}
(5) for $x_k$ and $\epsilon_k$, is equivalent to saying that
$x=x_{k,\Lambda}$. Therefore, (5') is proved.

For the proof of (6'), denote by
$S_k^g$ the unit sphere for $\norm{\cdot}_{\lambda_k,g}$, by
$S_{\Lambda}^G$ the unit sphere for $\norm{\cdot}_{\Lambda,G}$,
by $S$ the unit sphere for $\norm{\cdot}$, $S'$ the set of points of $S$ on which
$\norm{\cdot}_{\Lambda,G}=\norm{\cdot}$. By Lemma \ref{LemmaGbellenot} (c) and (d),
$S_{\Lambda}^G=S' \cup (\bigcup_{k,g} (S_k^g \setminus S))$.
Let $[x_{k,\Lambda},x_{k,\Lambda}+w]$ be a maximal
line segment in $S_{\Lambda}^{G}$. 

We claim that $\|w\| \leqslant \delta_k$.
To see this assume that $\|w\|>\delta_k$, then we find a non trivial segment $[w_1,w_2]$ in
$S_{\Lambda}^G$ such that all points of the segment are at distance strictly more than $\delta_k$ and strictly less than $\delta_k+(d_k/12)$ to $x_{k,\Lambda}$, where
$d_k=\min_{i \leqslant k}c_i$.
Then, because of the lower estimate, no point on the segment may belong to $S_k^g \setminus S$, nor can it belong to $S_l^h \setminus S$ for $hx_l \neq gx_k$, as otherwise $d(S_k^g,S_l^h) < \delta_k+d_k/12<d_k/4+d_k/12=d_k/3$, contradicting Lemma \ref{LemmaGbellenot} (a). This means that the segment is included in $S'$. But this will then contradict the strict convexity of the norm ${\norm{\cdot}}$ in the hypothesis, since $\delta_k+(d_k/12) \leqslant d_k/3 \leqslant \epsilon_k/6$. 
So the claim is proved.

Going back to $x_{k,\Lambda}$, since this vector  belongs to $S_k^{\Id}\setminus S$, we 
deduce, from the claim and the fact that $\|y\|_{\Lambda,G}$ coincides with $\|y\|_{\lambda_k,Id}$ for any $y$ of $S_\Lambda^G$ with $\|x_{k,\Lambda}-y\| \leqslant 4\delta_k/3$, that 
 $[x_{k,\Lambda},x_{k,\Lambda}+w]$ should be a maximal line segment in
$S_k^{\Id}$.

By 
 Proposition \ref{bellenot}  (6) applied to
$\norm{\cdot}_{\lambda_k,\Id}$, we therefore deduce  that $b_k \geqslant \norm{w} \geqslant \lambda_k^{-1}-1$,
which proves (6') and concludes the proof.
 \end{proof}


\subsection{Distinguished sequences of vectors}

We now define a new notion.
For $G\leqslant {\rm Isom}(X)$ and $x_1,\ldots,x_n\in X$, denote by $G(x_1,\ldots,x_n)$ the closed subgroup
of $G$ which fixes all $x_i$, that is
$$G(x_1,\ldots,x_n)=\{g \in G\del  \forall i=1,\ldots n \;\; gx_i=x_i\},$$
and note that $G(\emptyset)=G$.

\begin{defi} 
Let $X$ be a  Banach space, $G$ be a group of isomorphims on $X$, and $(x_n)_n$ be a finite or infinite sequence of vectors in $X$. We shall say that $(x_n)_n$ is {\em distinguished} by $G$  if 
for any $n \geqslant 0$, the $G(x_0,\ldots,x_{n-1})$-orbit of $x_n$ is discrete. 
\end{defi}

A simple occurence of this is when 
the $G$-orbit of each $x_n$ is discrete. Note that when a sequence $(x_n)_n$ with dense linear span  is distinguished by $G$, we have that any two distinct elements $g$ and $g'$ of $G$ may be differentiated by their values $gx_n$ and $g'x_n$ in at least some $x_n$, and with some lower uniform estimate for $d(gx_n,g'x_n)$ depending only on $n$.

Observe also that the point $x_0$ must have discrete orbit for the sequence $(x_n)_n$ to be distinguished by $G$, but $x_0$ itself need not be a distinguished point, that is, some non-trivial elements of $G$ may fix $x_0$. 

\begin{lemme}\label{tec} Let $X$ be a Banach space, $G$ be a  group of isometries on $X$, and suppose that $(x_n)_{n \geqslant 0}$ is a possibly finite, linearly independent, $G$-distinguished sequence of points of $S_X$. Let $d_n$ be the distance of $x_n$ to $[x_0,\ldots,x_{n-1}]$ and let $\epsilon$ be positive. 
Then
there exist functions $\mu_0^n:\R^{n-1} \rightarrow \R$, $n \geqslant 1$, such that 
whenever a real positive sequence $(\mu_n)_n$ satisfies
$\mu_0=1$, $\mu_1 \leqslant \mu_1^0$ and $\mu_n \leqslant \mu_n^0(\mu_1,\ldots,\mu_{n-1})$ for all $n$, then the sequence $(y_n)_{n \geqslant 0}$ defined by
$$y_n=\frac{\sum_{0}^n \mu_k x_k}{\|\sum_{0}^n \mu_k x_k\|}$$
satisfies:
\begin{enumerate}
\item[(a)]  for any $n \geqslant 0$ and for any $g \in G$, either $y_n=gy_n$ or $\norm{y_n-gy_n} \geqslant (1-\epsilon)\mu_{n+1}d_{n+1}$,
\item[(b)]   for any $n > m \geqslant 0$ and for any $g \in G$,  $\|y_n-gy_m\| \geqslant (1-\epsilon) \mu_{m+1}d_{m+1}$.
\end{enumerate}
\end{lemme}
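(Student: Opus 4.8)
The idea is to choose the $\mu_n$ decreasing so fast that the vector $y_n$ is an arbitrarily small perturbation of $x_n$, so that the distinguishedness estimates for the original sequence $(x_n)$ transfer to the $(y_n)$. Concretely, writing $s_n=\sum_{k=0}^n\mu_kx_k$, I will pick the functions $\mu_0^n$ so that the tail $\sum_{k\geqslant n+1}\mu_k\norm{x_k}$ is tiny compared with $\mu_n d_n$ (and compared with $1$); since $\norm{x_k}=1$ this is just a statement about the speed of decay of $(\mu_k)$, and it can be arranged with $\mu_n^0$ depending only on $\mu_1,\ldots,\mu_{n-1}$ (and $\epsilon$, and the fixed data $d_1,\ldots,d_{n-1}$). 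The first routine step is then to record that $\norm{s_n}$ is bounded below and above by absolute constants close to $1$ — indeed $s_n=\mu_0x_0+(\text{small})$, so $\norm{s_n}$ is within $\epsilon$ of $1$ — and hence $y_n=s_n/\norm{s_n}$ is well defined and $\norm{y_n-x_0}$, more usefully $\norm{s_n-(\mu_0x_0+\cdots+\mu_nx_n)}=0$ trivially, while $\norm{y_n-\mu_n x_n/\norm{s_n}-\cdots}$ — the point being that $s_n$ differs from $s_{n-1}$ only by $\mu_nx_n$.

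The heart of the matter is a \emph{projection} argument. Let $P_m$ denote (a choice of) the quotient map $X\to X/[x_0,\ldots,x_{m}]$, or better, fix a norm-one functional $\phi_{m}$ with $\phi_m$ vanishing on $[x_0,\ldots,x_{m}]$ and $\phi_m(x_{m+1})=d_{m+1}$; this is what the definition of $d_{m+1}$ as a distance to a subspace provides via Hahn–Banach. For (b), given $n>m$ and $g\in G$, apply $\phi_m$ to $y_n-gy_m$. On the one hand $\phi_m(gy_m)$: writing $gy_m=\frac{1}{\norm{s_m}}\sum_{k=0}^m\mu_k gx_k$, each $gx_k$ lies in $[x_0,\ldots,x_m]$? — no, this is false in general, so instead I apply $\phi_m$ to $gs_m$ and use that $g$ is an isometry together with the fact that $g$ fixes $x_0,\ldots$? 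This does not hold either. So the correct move is: $gy_m$ is a unit vector, hence $\abs{\phi_m(gy_m)}\leqslant 1$, which is too weak. Let me instead take $\phi_m$ to be the functional separating $x_{m+1}$ from $[x_0,\ldots,x_m]$ and estimate $\phi_m(y_n)$ directly: $\phi_m(s_n)=\sum_{k=m+1}^n\mu_k\phi_m(x_k)$, whose dominant term is $\mu_{m+1}\phi_m(x_{m+1})=\mu_{m+1}d_{m+1}$ provided the remaining terms $\sum_{k>m+1}\mu_k\abs{\phi_m(x_k)}\leqslant\sum_{k>m+1}\mu_k$ are, by our choice of decay, at most $\tfrac\epsilon2\mu_{m+1}d_{m+1}$. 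Hence $\abs{\phi_m(y_n)}\geqslant(1-\tfrac\epsilon2)\mu_{m+1}d_{m+1}/\norm{s_n}\geqslant(1-\epsilon)\mu_{m+1}d_{m+1}$ after absorbing the $\norm{s_n}\leqslant 1+\epsilon$ factor into $\epsilon$. Meanwhile $\phi_m(gy_m)=0$ because $gy_m\in g[x_0,\ldots,x_m]$ — and here is where $G$-distinguishedness and the choice of $\phi_m$ must be coordinated: since $(x_n)$ is $G$-distinguished, one should instead argue that $g$ sends $[x_0,\ldots,x_m]$ into itself when $g$ fixes $x_0,\ldots,x_{m}$, and reduce the general $g$ to that case. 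So the genuine key step, and the main obstacle, is to set this up correctly: use the tower $G\geqslant G(x_0)\geqslant G(x_0,x_1)\geqslant\cdots$ and the discreteness of the $G(x_0,\ldots,x_{n-1})$-orbit of $x_n$, rather than trying to use a single functional uniformly in $g$.

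Here is the cleaner route I will actually take. For (a): fix $n$ and $g\in G$ with $gy_n\neq y_n$. Since $y_n$ is a positive-coefficient combination of $x_0,\ldots,x_n$ with $y_n\approx x_0$, and $g$ is a linear isometry, $gy_n\neq y_n$ forces $g x_j\neq x_j$ for some $j\leqslant n$; let $j$ be least such. Then $g\in G(x_0,\ldots,x_{j-1})$, the $G(x_0,\ldots,x_{j-1})$-orbit of $x_j$ is discrete, so $\norm{gx_j-x_j}\geqslant\eta_j$ for a fixed $\eta_j>0$ depending only on $x_0,\ldots,x_j$; now estimate $\norm{y_n-gy_n}$ from below by applying a suitable norm-one functional that detects the $x_j$-direction modulo $[x_0,\ldots,x_{j-1}]$ — taking the decay of $(\mu_k)$ fast enough that all terms $x_k$ with $k\neq j$, $k\le n$, contribute less than $\tfrac\epsilon2\mu_j\eta_j$, and the tail contributes less still. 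Wait — but the lower bound claimed in (a) is $(1-\epsilon)\mu_{n+1}d_{n+1}$, \emph{not} involving $\eta_j$; this means the intended proof does not use discreteness at all for the generic separation, but rather: if $gy_n\neq y_n$ then in particular $g$ moves some coordinate, and one compares $y_n$ with $gy_n$ \emph{modulo} $[x_0,\ldots,x_n]$, where $y_n$ projects to $0$ but — no. Re-reading the statement: the bound is in terms of $\mu_{n+1}d_{n+1}$, which is \emph{smaller} than anything involving only $x_0,\ldots,x_n$; so the honest reading is that we only need the \emph{crude} fact $\norm{y_n-gy_n}\geqslant(1-\epsilon)\mu_{n+1}d_{n+1}$, and this will follow because $\mu_{n+1}d_{n+1}$ is so minuscule that \emph{any} nonzero displacement of $y_n$ by an isometry exceeds it — since $gy_n\neq y_n$ and $g$ permutes the (discrete!) orbit structure, $\norm{gy_n-y_n}$ is bounded below by a quantity depending only on $x_0,\ldots,x_n$ and $\mu_0,\ldots,\mu_n$, hence $\geqslant\mu_{n+1}d_{n+1}$ once $\mu_{n+1}^0$ is chosen small relative to $\mu_0,\ldots,\mu_n$. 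So the plan is: (i) choose the $\mu_k^0$ recursively so that $\mu_{k+1}$ is small compared to $\mu_k d_k$, compared to $1$, and compared to a "separation modulus" $\rho_k(\mu_0,\ldots,\mu_k,x_0,\ldots,x_k)$ below which no isometry can move $y_k$ nontrivially; (ii) verify $\norm{s_n}\in(1-\epsilon,1+\epsilon)$; (iii) prove (b) by the functional-$\phi_m$ tail estimate above; (iv) prove (a) by the same tail estimate for the direction $x_{n+1}$ after noting that $gy_n\neq y_n\Rightarrow gy_{n+1}$ differs from $y_{n+1}$ in the $x_{n+1}$-free part? — the tidy statement is simply that $y_{n+1}-y_n$ has a component of size $\mu_{n+1}d_{n+1}$ in the direction orthogonal to $[x_0,\ldots,x_n]$, and $gy_n$ still lies within $\epsilon\mu_{n+1}d_{n+1}$ of that same subspace when $g$ fixes $x_0,\ldots,x_n$; reduce to that case using the least $j$ with $gx_j\neq x_j$ exactly as in (b) of the definition of distinguished sequence. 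The one place requiring genuine care — the main obstacle — is organizing the recursion in (i) so that each $\mu_k^0$ depends \emph{only} on $\mu_1,\ldots,\mu_{k-1}$ (and the fixed data and $\epsilon$), which is possible precisely because the separation modulus $\rho_{k-1}$ and the quantities $d_1,\ldots,d_{k-1}$ are determined by $x_0,\ldots,x_{k-1}$ and $\mu_0,\ldots,\mu_{k-1}$; everything else is the elementary Hahn–Banach tail computation.
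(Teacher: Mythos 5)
Your final plan is correct and coincides with the paper's own argument: the recursive choice of $(\mu_k)$ with $\mu_{k+1}$ small relative to the separation modulus $\mu_k\alpha_k$ of $y_k$ and to $\mu_k d_k$, together with the case split on the least $j$ with $gx_j\neq x_j$ (using discreteness of the $G(x_0,\ldots,x_{j-1})$-orbit of $x_j$ and isolating the term $\mu_j(x_j-gx_j)$ against the tail) versus the case where $g$ fixes $x_0,\ldots,x_m$ (using the Hahn--Banach lower bound $\norm{v+tx_{m+1}}\geqslant d_{m+1}|t|$), is exactly how the paper proceeds. The false starts you correct along the way (e.g.\ trying to use a single functional $\phi_m$ uniformly in $g$) do not survive into the final plan, so there is no gap.
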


\begin{proof}
 Since the $G(x_0,\ldots,x_{n-1})$-orbit of $x_n$ is discrete for each $n$,
let $(\alpha_n)_n$ be a decreasing sequence of positive numbers such that for any $n$ and $g \in G(x_0,\ldots,x_{n-1})$, either
$gx_n=x_n$ or $\|gx_n-x_n\| \geqslant \alpha_n$. 

Observe also that by the definition of $(d_n)_n$ and Hahn-Banach theorem,
$$\norm{y+t x_{n}} \geqslant d_{n} |t|$$ whenever
$n \geqslant 1$, $y$ is in the linear span of $[x_0,\ldots,x_{n-1}]$ and $t \in \R$.

We may assume that $\epsilon<1$, and pick $\mu_0^k$, so that if 
$\mu_k \leqslant \mu_0^k(\mu_1,\ldots,\mu_{k-1})$ for each $k$, then
the sequence $(\mu_k)$ is decreasing sufficiently fast to ensure the following conditions, which may be redundant.
\begin{itemize}
\item for each $k$, $\|\sum_{j=0}^k  \mu_j x_j\| \in [1-\epsilon/2,1+\epsilon/2]$ and $\|\sum_{j=0}^k \mu_j x_j\|^{-1} \in [1-\epsilon/2,1+\epsilon/2]$,
\item for each $k$, $\sum_{j>k}\mu_j < \min\{\alpha_k \mu_k/4,\alpha_k/16,\mu_k\}$,
\item for each $k$,
$8 \mu_{k+1} \leqslant \min\{\alpha_k \mu_k/24,\epsilon d_k \mu_k\},$
\item for each $k$, 
$(1-\epsilon)d_{k+1}\mu_{k+1} \leqslant \alpha_k \mu_k/8.$ 
\end{itemize}

Now given $m \geqslant 0$,  we compute $\norm{y_n-gy_m}$, for $n \geqslant m$.

If $n=m$ then it is clear that either $gx_k=x_k$ for all $k \leqslant n$, in which case $y_n=gy_n$; or that, if $K=\min\{k \leqslant n: gx_k \neq x_k\}$, then by hypothesis  $\|gx_k-x_k\| \geqslant \alpha_K$, and therefore
$$\|y_n-gy_n\|=\frac{\|\sum_{k \geqslant K}\mu_k(x_k-gx_k)\|}{\|\sum_{k=0}^n \mu_k x_k\|} \geqslant \frac{1}{2}(\alpha_K \mu_K -2\sum_{k>K}\mu_k)$$
$$\geqslant \frac{1}{4} \alpha_K \mu_K \geqslant \frac{1}{4} \alpha_n \mu_n \geqslant
(1-\epsilon)\mu_{n+1}d_{n+1},$$
proving (a).

Assume now that $n>m$. Let $K$ be the first integer $k$  less than or equal to $m$ such that $gx_k \neq x_k$, if such an integer exists, or $K=m+1$ otherwise.
Then
$$
\|y_n-gy_m\|=\NORM{\frac{\sum_{k=0}^{K-1}\mu_k x_k+\sum_{k=K}^n \mu_k x_k}{\|\sum_0^n \mu_k x_k\|}-\frac{\sum_{k=0}^{K-1}\mu_k x_k+\sum_{k=K}^m \mu_k gx_k}{\|\sum_0^m \mu_k x_k\|}}. \ \ \ \ \ (1)
$$
First assume that $K \leqslant m$. Then the part of the sum in (1) corresponding to $x_K$ and $gx_K$ is equal to
$$
\frac{\mu_K x_K}{\|\sum_0^n \mu_k x_k\|}-
\frac{\mu_K gx_K}{\|\sum_0^m \mu_k x_k\|},
$$
whose norm is equal to 
$$
\NORM{\frac{\mu_K(x_K-gx_K)}{\|\sum_0^m \mu_k x_k\|}+
\Big(\frac{1}{\|\sum_0^n \mu_k x_k\|}-\frac{1}{\|\sum_0^m \mu_k x_k\|}\Big)\mu_K x_K},
$$
which is greater than
$$
\frac{\mu_K \alpha_K}2   -4\mu_K  \Big|\Norm{\sum_0^n \mu_k x_k}-\Norm{\sum_0^m \mu_k x_k}\Big| \geqslant
\frac{\mu_K}2\Big(\alpha_K-8\Norm{\sum_{m+1}^n \mu_k x_k}\Big) \geqslant
$$
$$
\frac{\mu_K}2 \Big(\alpha_K-8\sum_{k>m}\mu_k\Big) \geqslant
\frac{\mu_K}2 \Big(\alpha_K-8\sum_{k>K}\mu_k\Big)
\geqslant \mu_K \alpha_K/4.  \ \ \ \ \ (2)
$$

Now in the sum in (1) the linear combination of the $x_k$'s  for $k<K$ is equal to
$$
\Big(\frac{1}{\|\sum_0^n \mu_k x_k\|}-\frac{1}{\|\sum_0^m \mu_k x_k\|}\Big)
\sum_0^{K-1}\mu_k x_k,
$$
of norm less than or equal to
$$
8 
\Norm{\sum_{m+1}^n \mu_k x_k} \leqslant 8 \sum_{K+1}^m \mu_{k} \leqslant
16 \mu_{K+1}. \ \ \ \ \ (3) 
$$
The linear combination of $x_k$'s and $gx_k$'s for $k>K$ on the other hand, is equal to
$$
\frac{\sum_{K+1}^n \mu_k x_k}{\norm{\sum_0^n \mu_k x_k}}
- \frac{\sum_{K+1}^m \mu_k gx_k}{\norm{\sum_0^m \mu_k x_k}}
$$
of norm less than or equal to
$$
4\sum_{k>K}\mu_k \leqslant 8 \mu_{K+1}. \ \ \ \ \ (4)
$$
 Putting (2),(3) and (4) together we obtain that when $K \leq m$,
$$
\|y_n-gy_m\| \geqslant \mu_K \alpha_K/4 -24\mu_{K+1} \geqslant \mu_K \alpha_K/8
\geqslant \mu_m \alpha_m/8 \  \ \ \ \ (5)
$$

Assume now that $K=m+1$, that is, assume that $gx_k=x_k$ for all $k \leq m$, then $y_n -gy_m$ is the sum of two terms, namely,
$$y_n-gy_m=\Big(\frac{\sum_0^m \mu_k x_k+\mu_{m+1}x_{m+1}}{\|\sum_0^n \mu_k x_k\|}
-\frac{\sum_0^m \mu_k x_k}{\|\sum_0^m \mu_k x_k\|}\Big)+\Big(\frac{\sum_{k>m+1}\mu_k x_k}{\Norm{\sum_0^n \mu_k x_k}}\Big)$$

By definition of $d_{m+1}$, the  norm of the first term is at least
$$
\frac{d_{m+1}\mu_{m+1}}{\Norm{\sum_0^n \mu_k x_k}} \geqslant (1-\epsilon/2)d_{m+1}\mu_{m+1}, \ \ \ \ \ (6)
$$
while the norm of the second is at most
$$
2\sum_{k>m+1}\mu_k \leqslant 4 \mu_{m+2} \ \ \ \ \ (7).
$$
Putting (6) and (7) together we have that when $K=m+1$,
$$
\|y_n-gy_m\| \geqslant (1-\epsilon/2)d_{m+1}\mu_{m+1}-4 \mu_{m+2} \geqslant
(1-\epsilon)d_{m+1}\mu_{m+1}. \ \ \ \ \ (8)
$$

Now by (5) or (8) and the properties of the sequence $(\mu_n)_n$, we have that whenever $g \in G$ and $n>m$,
$\|y_n-gy_m\| \geqslant (1-\epsilon)d_{m+1}\mu_{m+1},$ which proves (b).
As $\epsilon$ was arbitrary this concludes the proof of the lemma. \end{proof}

\begin{thm}\label{main} 
Let $(X,\|\cdot\|)$ be a separable real Banach space and $G$ be a closed subgroup of ${\rm Isom}(X,\|\cdot\|)$ containing $-\Id$. Assume that there exists a possibly finite $G$-distinguished sequence $(x_n)_{n \geq 0}$ of points of $S_X$ with dense linear span, such that the norm $\|\cdot\|$ is LUR in a neighborhood of $x_0$.
Then there exists an equivalent norm $\triple\cdot$ on $X$ such that $G={\rm Isom}(X,\triple\cdot)$.
\end{thm}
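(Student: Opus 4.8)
The plan is to bootstrap the renorming machinery of Proposition \ref{Gbellenot} along the given $G$-distinguished sequence $(x_n)_n$, in a way that makes the group $G$ visible from the metric geometry of the new norm. The first step is to replace $(x_n)_n$ by a ``spreading out'' sequence $(y_n)_n$ as produced by Lemma \ref{tec}: choosing a fast-decreasing parameter sequence $(\mu_n)_n$, we obtain normalized vectors $y_n$ with dense linear span such that each $G$-orbit $Gy_n$ is \emph{discrete} (not merely the $G(y_0,\ldots,y_{n-1})$-orbit), with quantitative lower bounds $\|y_n-gy_n\|\geqslant(1-\epsilon)\mu_{n+1}d_{n+1}$ and $\|y_n-gy_m\|\geqslant(1-\epsilon)\mu_{m+1}d_{m+1}$ for $n>m$. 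Thus for the family $(y_n)_n$, hypotheses (1), (2), (3) of Proposition \ref{Gbellenot} will hold: LUR-ness at $y_0$ (hence in a neighbourhood, by the hypothesis of the theorem carried along the finite-dimensional perturbations — here one must be a little careful and possibly only a suitable subsequence or weighting works, but $y_0$ is close to $x_0$ and lies where the norm is LUR), separation of orbits by constants $c_k$ coming from the Lemma \ref{tec} estimates, and $y_k\notin\bigcup_{j<k}Gy_j$ by (b) of Lemma \ref{tec}. One subtlety: condition (1) of Proposition \ref{Gbellenot} demands LUR \emph{and} strict convexity near each $y_k$, not just near $y_0$; so before applying the pimple construction one first passes to an auxiliary LUR renorming of $X$ (possible since $X$ is separable) that agrees with $\|\cdot\|$ in a neighbourhood of $y_0$ up to the needed precision, or simply works with a norm that is LUR everywhere — then $G$ may no longer be isometric, so instead one keeps $G$-invariance by averaging/$\sup$ over $G$, which is legitimate since $G$ is bounded (it consists of isometries). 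This juggling of ``LUR everywhere'' versus ``$G$-invariant'' is where the argument is most delicate.

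Granting a $G$-invariant norm, still called $\|\cdot\|$, that is LUR and strictly convex at every $y_k$, apply Proposition \ref{Gbellenot} with the sequence $(y_k)_k$, suitable $(\delta_k)_k$, $(b_k)_k$ and some fixed $0<m<1$, to get $\Lambda=(\lambda_k)_k$ and the $(\Lambda,G)$-pimple norm $\triple\cdot=\|\cdot\|_{\Lambda,G}$. By construction every $g\in G$ is a $\triple\cdot$-isometry, so $G\leqslant{\rm Isom}(X,\triple\cdot)$. For the reverse inclusion, let $T\in{\rm Isom}(X,\triple\cdot)$. The point is that the pimples encode the orbit structure: by (5') the isolated extremal points of $B(X,\triple\cdot)$ whose directions are within $c_k/2$ of $y_k$ are exactly the points $\pm\lambda_k^{-1}y_k$, and by (6') the lengths of the maximal segments emanating from $\lambda_k^{-1}y_k$ pin down $k$ (choosing the $b_k$ strictly decreasing and the spikes of genuinely different sizes, so that the invariant $\lambda_k^{-1}-1\leqslant\|w\|\leqslant b_k$ separates the levels $k$). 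Hence $T$ permutes the set $\{\pm\lambda_k^{-1}gy_k: k, g\in G\}$ of spike-tips, preserving $k$ and, since $T$ is linear and maps $0$ to $0$, preserving the finite-valence combinatorial data just as in Lemma \ref{lemme15}. Consequently $T$ maps each $Gy_k$ onto itself as a set, up to sign; in particular $Ty_0=\pm g_0y_0$ for some $g_0\in G$, and after composing with $\pm g_0^{-1}\in G$ we may assume $Ty_0=y_0$. Iterating down the sequence $(y_k)_k$ — at stage $k$, $T$ fixes $y_0,\ldots,y_{k-1}$, so it permutes the discrete $G(y_0,\ldots,y_{k-1})$-orbit of $y_k$, i.e. $Ty_k=\pm hy_k$ with $h\in G(y_0,\ldots,y_{k-1})$, and since it also fixes $y_0$ the sign is forced to be $+$ for the original unsigned analysis; compose with $h^{-1}$ — we arrange (a limiting argument using that $(y_k)_k$ has dense span and that $G$ is \textsc{sot}-closed) that after multiplying $T$ by an element of $G$ it fixes every $y_k$, hence is the identity on a dense set, hence $T=\Id$. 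Therefore $T\in G$, and ${\rm Isom}(X,\triple\cdot)=G$.

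The last paragraph hides the only other real obstacle: controlling the sign $\eps=\pm1$ and showing the correction elements accumulate to a genuine element of $G$. For the sign, one uses $-\Id\in G$ together with an asymmetry in the pimple construction analogous to the parity trick of Lemma \ref{lemme15} — e.g. by pimpling at $y_k$ with slightly different radii on $+y_k$ and $-y_k$ would break linearity, so instead one notes that $T(-v)=-Tv$ automatically, so fixing $Ty_0=y_0$ already removes the global sign ambiguity and thereafter no signs appear. For convergence: at each stage we have a group element $h_k\in G(y_0,\ldots,y_{k-1})$ with $\|h_k y_j - y_j\|$ small for $j\leqslant k$ (in fact $h_k$ fixes $y_0,\ldots,y_{k-1}$ exactly and moves $y_k$ to $T^{-1}$-of-something in its orbit); composing $g_k:=h_0h_1\cdots h_k$ one checks $(g_k)_k$ is \textsc{sot}-Cauchy on the dense span and its limit $g\in G$ satisfies $g^{-1}Ty_k=y_k$ for all $k$, so $g^{-1}T=\Id$ and $T=g\in G$. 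I expect the bulk of the write-up to be (i) the preliminary manoeuvre securing a $G$-invariant, everywhere-(or locally-enough)-LUR-and-strictly-convex norm without destroying boundedness of $G$, and (ii) verifying that the constants $(\delta_k),(b_k),(\mu_k)$ can be chosen compatibly so that all the separation estimates of Lemma \ref{tec} and all of (3')–(6') of Proposition \ref{Gbellenot} hold simultaneously; the identification of ${\rm Isom}(X,\triple\cdot)$ with $G$ is then a structural argument parallel to the $c_0$ case.
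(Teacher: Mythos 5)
Your overall architecture is the paper's: pass to the perturbed sequence $(y_n)_n$ of Lemma \ref{tec}, apply Proposition \ref{Gbellenot} to get the $(\Lambda,G)$-pimple norm, use (5') to identify the isolated extremal points as $\{\lambda_n^{-1}gy_n\}$, use (6') with rapidly decreasing $b_k$ to show an isometry $T$ preserves each level $k$ and hence each orbit $Gy_n$ (the signs being absorbed since $-\Id\in G$), and finally use SOT-closedness of $G$ to conclude $T\in G$. The concluding induction you sketch is carried out in the paper more concretely: from $Ty_n=g_ny_n$, i.e.\ $\sum_{k\leqslant n}\mu_k(Tx_k-g_nx_k)=0$, one shows $Tx_k=g_nx_k$ for all $k\leqslant n$ by isolating the first index of disagreement and comparing $\mu_K\alpha_K$ against $3\sum_{k>K}\mu_k$; this is cleaner than correcting $T$ by successive elements $h_k$, since it is not immediate that the $g\in G$ with $Ty_k=gy_k$ can be chosen in $G(y_0,\ldots,y_{k-1})$. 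But these are matters of bookkeeping.

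The genuine gap is in your treatment of hypothesis (1) of Proposition \ref{Gbellenot}. You propose to secure LUR-ness and strict convexity near every $y_k$ by first passing to an everywhere-LUR renorming of $X$ and then restoring $G$-invariance by taking $\sup_{g\in G}\|gx\|$. This step fails: the supremum over an infinite group of isometries of an LUR norm need not be LUR, and indeed no such $G$-invariant LUR renorming need exist at all --- this is exactly why Lancien's theorem requires the Radon--Nikodym property, and the paper's own example of $C([0,1]^2)$ exhibits a separable space that admits no equivalent LUR norm without diminishing the isometry group. The correct resolution is much simpler and uses the hypothesis as stated: since $\mu_0=1$ and $\sum_{k\geqslant 1}\mu_k$ can be made arbitrarily small, every $y_n=z_n/\|z_n\|$ lies as close to $x_0$ as desired, hence inside the given open neighbourhood of $x_0$ on which $\|\cdot\|$ is LUR; condition (1) of Proposition \ref{Gbellenot} then holds for each $y_k$ with no renorming at all. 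So the step you flag as ``most delicate'' is in fact the one place where your proposed route would break down, and the fix is to exploit that the whole perturbed sequence clusters at $x_0$.
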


\begin{proof}
By passing to a subsequence, we may without loss of generality assume that the sequence $(x_n)_n$ is linearly independent.

Choose a sequence $(\mu_n)_n$ satisfying the conditions defined in Lemma \ref{tec}. So, in particular, $\mu_0=1$ and $4\sum_{k>n}\mu_k<\mu_n \alpha_n$, where  $\alpha_n>0$ is fixed such that any $g \in G_n$ satisfies $gx_n=x_n$ or $\|gx_n-x_n\| \geqslant \alpha_n$.
Set $y_0=x_0$ and for $n \geqslant 1$ let  $z_n=\sum_{k=0}^n \mu_k x_k$ and  $y_n=\frac{z_n}{\|z_n\|}$.
By choosing the $\mu_n$ sufficiently small, we may therefore also assume that every $y_n$ belongs to an open neighborhood of $x_0$ where the norm is LUR.

By the fact that the norm is LUR in a neighborhood of every $y_n$, condition (1) for the sequence $(y_n)_n$ in Proposition \ref{Gbellenot} holds, and conditions (2)-(3)  follow from
the conclusion of Lemma \ref{tec}.  So let $(\lambda_n)$ be a sequence satisfying the conditions given by the conclusion of Proposition \ref{Gbellenot} for the sequence $(y_n)_n$, let $\Lambda=(\lambda_n)_n$ and consider the associated $\Lambda,G$-pimple norm $\triple \cdot$ . As was already observed, any $g$ in $G$ is still an isometry for the norm $\triple\cdot$.

Let now $T$ be a $\triple\cdot$-isometry. We shall prove that $T$ belongs to $G$.
Observe that $E=\{\lambda_n^{-1}gy_n\del  g \in G, n \in K\}$ is the set of isolated extremal points of $\triple\cdot$. Indeed
for an  $x$ of $\Lambda,G$-pimple norm $1$, either $\Norm{\frac{x}{\norm{x}}-gy_n}<c_n/2$ for some $g,n$, in which case by (5') $x=\lambda_n^{-1}y_n$ if it is an isolated extremal point; or
$\Norm{\frac{x}{\norm{x}}-gy_n} \geqslant c_k/2 >\delta_n$ for all $g,n$ then by
(4') the $\Lambda,G$-pimple norm coincides with ${\norm{\cdot}}$ in a neighborhood of $x$ and then $x$ is not an isolated extremal point since $\norm{\cdot}$ is strictly convex in $x$.

Therefore any isometry $T$ for $\triple\cdot$ maps $E$ onto itself. If $n<m$ and $g \in G$, then $T$ cannot map $\lambda_n^{-1}y_n$ to
$\lambda_m^{-1} gy_m$. Indeed if $w$ (resp. $w'$) is a vector so that
$\lambda_n^{-1}y_n$ and $\lambda_n^{-1}y_n+w$ (resp.
$\lambda_m^{-1}gy_m$ and $\lambda_m^{-1}gy_m+w'$) are endpoints of a maximal
line segment in $S_X^{\triple\cdot}$, 
then, since $g$ is a $\triple\cdot$-isometry, we may assume $g= \Id$ and  by (3') and (6')
$$
\triple w \geqslant \frac{1}{2} \norm{w} \geqslant\frac{1}{2}(\lambda_n^{-1}-1)>b_{n+1} \geqslant b_m \geqslant \norm{w'} \geqslant \triple w',
$$
and $\triple w \neq \triple w^{\prime}$, a contradiction.
Likewise, if $n>m$, we may by using $T^{-1}$ deduce that $T$ cannot map 
$\lambda_n^{-1}y_n$ to $\lambda_m^{-1}gy_m$.

Finally it follows that for each $n$, the orbit $Gy_n$ is preserved by $T$.
Then there exists for each $n$ a $g_n$ in $G$ such that $Ty_n=g_n y_n$, or equivalently
$$\sum_{k=0}^n \mu_k(Tx_k-g_nx_k)=0.$$
Now given $n$ we claim that $Tx_k=g_nx_k$ for every $k \leqslant n$.
This is clear for $n=0$. Assuming this holds for $n-1$, the previous equality becomes
$$\sum_{k=0}^{n-1}\mu_k(g_{n-1}x_k-g_nx_k)+(Tx_n-g_nx_n)=0.$$
Now if $Tx_k=g_{n-1}x_k=g_nx_k$ for all $k \leqslant n-1$, then it follows that $Tx_n-g_nx_n=0$ and we are done. Assume therefore that
there exists $k \leqslant n-1$ such that $g_{n-1}x_k \neq g_n x_k$ and let $K$ be the first such $k$, we would deduce that
$$
\mu_K(g_{n-1}x_K-g_nx_K)=-\sum_{k>K}\mu_k(Tx_k-g_nx_k).
$$
Since $g_n^{-1}g_{n-1}$ belongs to $G(x_0,\ldots,x_{K-1})$ and $g_n^{-1}g_{n-1}x_K \neq x_K$, the left hand part has ${\norm{\cdot}}$-norm at least $\mu_K  \alpha_K$.
 The right hand part has ${\norm{\cdot}}$-norm at most $3\sum_{k>K}\mu_k$.
By the condition on $(\mu_n)_n$ this is a contradiction. So for any $n$ and any $k \leqslant n$, $Tx_k=g_n x_k$. 

If the distinguished sequence $(x_n)_n$ is finite, of the form $(x_n)_{0 \leqslant n \leqslant N}$, then we deduce that $T=g_N \in G$. If it is infinite, then for each $k \in \N$ we deduce that $\lim_n g_n x_k=Tx_k$. Since $X$ is the closed linear span of the $x_k$' s this means that $g_n$ converges to $T$ in the strong operator topology. 
Since $G$ is closed we conclude that $T$ must belong to $G$.
\end{proof}



\subsection{Displays on the  spaces $\ell_p$, $L_p$ and $C([0,1])$}
 Note that if we let $S_\infty$ act on the compact group $\Z_2^\N$ by permutations of the coordinates, we can form the corresponding topological semidirect product $S_\infty\ltimes \Z_2^\N$. In other words, $S_\infty\ltimes \Z_2^\N$ is the cartesian product $S_\infty\times \Z_2^\N$ equipped with the product topology and the following group operation 
$$
(g,(a_n)_{n\in \N})*(f,(b_n)_{n\in \N})=(gf, (a_nb_{g\inv(n)})_{n\in \N}),
$$
where $g,f\in S_\infty$ and $(a_n)_{n\in \N},(b_n)_{n\in \N}\in \Z_2^\N$. In the following, we shall identify $\Z_2$ with the multiplicative group $\{-1,1\}$.

Suppose $X$ is a Banach space with a {\em symmetric Schauder decomposition} $X=\sum_{n\in \N}X_n$, i.e., such that for some equivalent norm $\norm\cdot$ the $X_n$ are isometric copies of the same space $Y\neq \{0\}$ and there is a symmetric Schauder basis $(e_n)_{n\in \N}$ for which
$$
\Norm{\sum_{n\in \N}x_n}=\Norm{\sum_{n\in \N}\norm{x_n}e_n},
$$
for $x_n\in X_n$. Then we can define a topologically faithful bounded linear representation $\rho\colon S_\infty\ltimes \Z_2^\N\til GL(X)$ by
setting
$$
\rho\big(g, (a_n)_{n\in \N}\big)\Big(\sum_{n\in \N}x_n\Big)=\sum_{n\in \N}a_nx_{g\inv(n)}.
$$
In other words, $\rho$ is the canonical action of $S_\infty\ltimes \Z_2^\N$ on $X=\sum_{n\in \N}X_n$ by change of signs and permutation of coordinates. Since the representation is bounded and topologically faithful it is easy to check that $\rho\big(S_\infty\ltimes \Z_2^\N\big)$ is a \textsc{sot}-closed subgroup  of $GL(X)$, though it is not \textsc{sot}-closed in $L(X)$.

\begin{lemme}\label{sinfty}
Let $G$ be a closed subgroup of $S_\infty$ containing a non-trivial central involution $s$. Then there is a topological embedding $\pi\colon G\til S_\infty \ltimes \Z_2^\N$ such that $\pi(s)=(1_{S_\infty}, (-1,-1,\ldots))$.
\end{lemme}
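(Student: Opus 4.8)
The plan is to exhibit an explicit embedding of $G$ into the permutation-and-sign group $S_\infty \ltimes \Z_2^\N$ that sends the central involution $s$ to the total sign change $(1_{S_\infty},(-1,-1,\ldots))$. The natural candidate is the ``regular-type'' action of $G$ on $G/H$ for a suitable open subgroup $H$, but twisted by a homomorphism detecting $s$. More precisely, since $s$ is a non-trivial central involution, the quotient $G/\langle s\rangle$ is a Polish group and $G\to G/\langle s\rangle$ is a topological quotient map; pick a countable basis of open neighbourhoods of the identity in $G/\langle s\rangle$ and pull them back to get a decreasing sequence $(U_m)_m$ of open subgroups of $G$ with $\bigcap_m U_m=\langle s\rangle$ (here one uses that closed subgroups of $S_\infty$ have a neighbourhood basis at $1$ consisting of open subgroups, namely pointwise stabilizers of finite sets, and that $s\in U_m$ for all $m$ since $s$ is central). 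Let $H$ be any one such $U_m$, or better, work with the action of $G$ on the disjoint union $\bigsqcup_m G/U_m$ so that the action is faithful; call the resulting countable $G$-set $\Omega$ and enumerate it as $\{\omega_n : n\in\N\}$, which gives a continuous homomorphism $\sigma\colon G\to S_\infty$ whose kernel is $\bigcap_m U_m=\langle s\rangle$.

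Next I would build the sign component. Because $s$ is central, it acts trivially under $\sigma$, so $\sigma$ does not yet separate $s$ from $1$; the sign part must do this. For each coset space $G/U_m$ on which $s$ acts trivially, I want a function recording the $\langle s\rangle$-coset of a chosen representative: fix for each $n$ (i.e. for each $\omega_n=gU_m$) a lift and define $a_n(h)\in\{-1,1\}$ by comparing $h\cdot(\text{lift of }\omega_n)$ with the lift of $h\cdot\omega_n$ modulo $\langle s\rangle$ — concretely, if $\omega_n=gU_m$ and $\tilde\omega_n\in G$ is the chosen lift, then $h\tilde\omega_n$ and $\widetilde{h\omega_n}$ lie in the same $U_m$-coset, hence differ by an element of $U_m$, and one sets $a_n(h)=-1$ iff that element lies in the non-identity $\langle s\rangle$-coset inside $U_m$ (this makes sense because $s\in U_m$, and is well-defined once we fix, for each $m$, a splitting of $U_m\to U_m/\langle s\rangle$ at least on the relevant elements, or equivalently a section of $U_m\twoheadrightarrow U_m/\langle s\rangle$). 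This is the standard cocycle construction of the induced representation $\mathrm{Ind}_{U_m}^G(\chi)$ where $\chi\colon U_m\to\{-1,1\}$ is the sign character with kernel an index-two open subgroup not containing... wait — here instead $\chi$ should be the character of $U_m/\langle s\rangle$'s... rather, the character $\chi\colon U_m\to\{\pm1\}$ with $\chi(s)=-1$; such a $\chi$ exists on at least one $U_m$ provided $\langle s\rangle$ is complemented in that $U_m$, which one can arrange by choosing $U_m$ small enough using that $U_m/\langle s\rangle$ has an open subgroup $V$ with $V\cap\langle s\rangle=1$ and taking $\chi$ to kill $V$.

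Then set $\pi(h)=\big(\sigma(h),(a_n(h))_{n\in\N}\big)$. One checks the cocycle identity $a_n(h_1h_2)=a_n(h_1)\cdot a_{\sigma(h_1)^{-1}(n)}(h_2)$, which is exactly what makes $\pi$ a homomorphism into $S_\infty\ltimes\Z_2^\N$ with the stated operation. Continuity of $\pi$ follows from continuity of $\sigma$ and the fact that each $a_n$ is locally constant (it depends only on $h$ modulo a fixed open subgroup). Injectivity: if $\pi(h)=1$ then $\sigma(h)=1$ so $h\in\langle s\rangle$, and then $h=s$ would give $a_n(s)=\chi(s)=-1$ for the indices $n$ coming from the coset space where $\chi$ lives, so $h\ne s$, i.e. $h=1$. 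That $\pi$ is a topological embedding then follows because $\pi$ is a continuous injective homomorphism between Polish groups whose inverse on the image is continuous — here the cleanest route is to observe that the first coordinate $\sigma$ already restricts to a topological isomorphism $G/\langle s\rangle\cong\sigma(G)$ (as $\sigma$ is a quotient map onto a closed subgroup of $S_\infty$), so recovering $h$ from $\pi(h)$ is continuous. Finally $\pi(s)=(\sigma(s),(a_n(s))_n)=(1_{S_\infty},(-1,-1,\ldots))$ once we have arranged, by using a single $U_m$ and the action on $G/U_m$ alone (not a disjoint union) for the sign-carrying part, that $a_n(s)=-1$ for every $n$; if faithfulness forces us to use several coset spaces, we simply put the sign character $\chi$ with $\chi(s)=-1$ on \emph{each} of them, which is possible since shrinking is monotone, so that $a_n(s)=-1$ uniformly in $n$.

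\smallskip

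The main obstacle is the sign component: producing, on a small enough open subgroup $U_m$, a continuous character $\chi\colon U_m\to\{\pm1\}$ with $\chi(s)=-1$, equivalently splitting the central extension $1\to\langle s\rangle\to U_m\to U_m/\langle s\rangle\to1$ over an open subgroup. This is where one must use that $U_m$ can be taken to be a \emph{pointwise stabilizer of a finite set} in the Polish group $S_\infty$ and that $s$, being a central involution in a closed subgroup of $S_\infty$, has a neighbourhood basis of open subgroups each admitting such a splitting (since in $S_\infty$ every open subgroup contains a smaller open subgroup $V$ meeting $\langle s\rangle$ trivially — just stabilize enough points to move off the support of $s$ — and then $\chi$ defined to be $1$ on $V$ and $-1$ on $sV$, extended by the index-two structure of $\langle s\rangle V\le U_m$, does the job after replacing $U_m$ by $\langle s\rangle V$). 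Verifying the cocycle identity and the topological-embedding claim are then routine.
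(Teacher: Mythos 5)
Your construction is genuinely different from the paper's. The paper's proof is much more concrete: since $s$ is central, its support $B=\{n:s(n)\neq n\}$ is $G$-invariant, $G$ acts faithfully on the countable set $B\times\N$, and there $s$ acts without fixed points; one then lets $\sigma_g$ be the permutation induced by $g$ on the two-element $s$-orbits $(O_n)$ and sets the sign $\rho(g)_n=\pm 1$ according to whether $g\colon O_{\sigma_g^{-1}(n)}\to O_n$ preserves or reverses a fixed linear order. The permutation-plus-sign data literally reconstructs the action of $g$ on $B\times\N$ point by point, so injectivity and the topological embedding property are immediate, with no quotient groups, characters, coset representatives or cocycle computations. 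Your route is instead a Kaloujnine--Krasner-type embedding into the permutational wreath product over the coset spaces $G/W_m$, with the sign coordinates given by the cocycle of the induced character $\chi_m$. The two essential ingredients there --- an open subgroup $V_m$ not containing $s$, so that $\chi_m$ exists on $W_m=\langle s\rangle V_m$ (using centrality of $s$ to make $\langle s\rangle V_m$ a group and $\chi_m$ a homomorphism), and the cocycle identity matching the semidirect product law --- are correctly identified and do hold; this buys an argument that uses nothing about $S_\infty$ beyond the existence of small open subgroups avoiding $s$.

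There is, however, a genuine gap in your verification that $\pi$ is a \emph{topological} embedding. You justify inverse continuity by asserting that $\sigma$ is a quotient map onto a closed subgroup of $S_\infty$, so that $G/\langle s\rangle\cong\sigma(G)$, and conclude that recovering $h$ from $\pi(h)$ is continuous. First, closedness of $\sigma(G)$ is exactly equivalent to the embedding property of the induced map on $G/\langle s\rangle$, so citing it is circular; it holds only because the images of the $W_m$ form a neighbourhood basis of the identity in $G/\langle s\rangle$ (true with your choice of $U_m$ as pullbacks of a basis, but this must be said --- a continuous injective homomorphism into $S_\infty$ need not have closed image). Second, even granting $G/\langle s\rangle\cong\sigma(G)$, the permutation coordinate only recovers $h\langle s\rangle$ continuously, not $h$: you must still show that the sign coordinates continuously select the correct sheet of the double cover $G\to G/\langle s\rangle$. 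The clean repair is to check that the $V_m=\ker\chi_m$ themselves form a neighbourhood basis at $1_G$: take $V_m=U_m\cap\{g\in G: g(n_0)=n_0\}$ for a fixed $n_0$ with $s(n_0)\neq n_0$; since the $U_m$ form a neighbourhood basis of the compact set $\{1,s\}$, for any finite $F\ni n_0$ one has $U_m\subseteq {\rm Stab}(F)\cup{\rm Stab}(F)s$ for large $m$, and no element of ${\rm Stab}(F)s$ fixes $n_0$, whence $V_m\subseteq{\rm Stab}(F)$. Then $\pi(h_k)\to\pi(h)$ forces, for each $m$, eventually $h^{-1}h_k\in W_m$ (from the permutation coordinate at the identity coset of $G/W_m$) and $\chi_m(h^{-1}h_k)=1$ (from the sign coordinate at the coset $hW_m$), i.e.\ $h^{-1}h_k\in V_m$, and hence $h_k\to h$. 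With that inserted, your proof is complete.
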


\begin{proof}
As $s$ is central, the set
$$
B=\{n\in \N\del s(n)\neq n\}
$$
is $G$-invariant, so we can define a faithful action of $G$ on $B \times \N$ by $g(n,m)=(g(n),g(m))$. It follows that $G$ is topologically isomorphic to its image in the group ${\rm Sym}(B\times \N)$ of all permutations of the countable set $B\times \N$ and, moreover, $s(n,m)\neq (n,m)$ for any $(n,m)\in B\times \N$. 

Let $(O_n)_{n \in \N}$ denote the orbits of $s$ on $B\times \N$ and note that, as $s$ is an involution without fixed points, $|O_n|=2$ for all $n\in \N$. Moreover, since $s$ is central, $G$ permutes the orbits of $s$ and so we obtain a homomorphism $\sigma\colon G\til S_\infty$ given by
$$
\sigma_g(n)=k \;\;\;\equi \;\;\;g[O_n]=O_k.
$$
Also, if  $\prec$ denotes a linear ordering on $B\times \N$, we can define a map $\rho\colon G\til \Z_2^\N$ by 
$$
\rho(g)_n=\begin{cases}1 & \text{ if $g\colon O_{\sigma_g\inv(n)}\mapsto O_n$ preserves $\prec$, }\\
-1 & \text{ if $g\colon O_{\sigma_g\inv(n)}\mapsto O_n$ reverses $\prec$.}
\end{cases}
$$
It follows that  the map $\pi\colon G\til S_\infty \ltimes \Z_2^\N$ defined by
$$
\pi(g)=(\sigma_g, (\rho(g)_n)_{n\in \N})
$$
is a embedding of $G$ into $S_\infty \ltimes \Z_2^\N$. To see this, note that, as the action of $G$ on $B\times \N$ is faithful, the map is clearly injective. On the other hand, to verify that it is a homomorphism, note that for $g, f\in G$, 
\[\begin{split}
\pi(g)*\pi(f)&=\big(\sigma_g,\big (\rho(g)_n\big)_{n\in \N}\big)*\big(\sigma_f,\big (\rho(f)_n\big)_{n\in \N}\big)\\
&=\big(\sigma_g\sigma_f,\big (\rho(g)_n\rho(f)_{\sigma_g\inv(n)}\big)_{n\in \N}\big)\\
&=\big(\sigma_{gf},\big (\rho(gf)_n\big)_{n\in \N}\big)\\
&=\pi(gf).
\end{split}\]
Finally, $\pi$ is now easily checked to be a homeomorphism of $G$ with its image in $S_\infty\ltimes \Z_2^\N$ and hence a topological  group embedding. Moreover, $\pi(s)=\big(1_{S_\infty}, (-1,-1,\ldots)\big)$. 
\end{proof}

\begin{lemme}\label{sym} 
Let $X$ be a separable real Banach space with an LUR norm, which is a Schauder sum of an infinite number of isometric copies of some space $Y$, and let $G$ be represented as a closed subgroup of ${\rm Isom}(X)$ containing $-\Id$ and acting by change of signs and/or permutations of the coordinates relative to the Schauder sum. Then the representation is a display. 
\end{lemme}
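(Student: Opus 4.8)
The plan is to deduce the statement from Theorem \ref{main}; the only work is to exhibit a $G$-distinguished sequence of unit vectors with dense linear span. Write $X=\sum_{n\in\N}X_n$ for the Schauder sum, each $X_n$ a fixed isometric copy of $Y$. Since $G$ acts by changes of sign and permutations of the coordinates, every $g\in G$ carries the block $X_n$ onto the block $X_{\pi_g(n)}$ by $\pm$ the canonical isometry between these two copies of $Y$, where $\pi_g\in S_\infty$ is the permutation induced by $g$.

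First I would fix, using separability of $Y$, a sequence $(w^j)_{j\geqslant 1}$ of unit vectors of $Y$ with dense linear span, and for each $j\geqslant 1$ and $n\in\N$ let $w^j_n\in S_X$ denote the copy of $w^j$ sitting in $X_n$. Then I would check two points. First, $\{w^j_n : j\geqslant 1,\ n\in\N\}$ has dense linear span in $X$: for each $n$ one has $\overline{{\rm span}}\{w^j_n : j\geqslant 1\}=X_n$ since $X_n$ is an isometric copy of $Y$ and $(w^j)_j$ spans a dense subspace of $Y$, and $\overline{{\rm span}}\,\bigcup_n X_n=X$ because $X=\sum_n X_n$. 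Second, each $w^j_n$ has uniformly discrete $G$-orbit: by the block behaviour of $G$, this orbit is contained in $\{\pm w^j_m : m\in\N\}$, and from the symmetric Schauder structure $\|w^j_m\pm w^j_{m'}\|=\|e_m+e_{m'}\|$ for all $m\neq m'$ and either sign, a fixed positive constant by symmetry of $(e_n)$, while $\|w^j_m-(-w^j_m)\|=2$. Hence there is $\beta>0$, independent of $j$ and $n$, with $\|u-v\|\geqslant\beta$ for any two distinct $u,v\in\{\pm w^j_m : m\in\N\}$; in particular the $G$-orbit of $w^j_n$ is $\beta$-separated. (Alternatively this follows from the uniform boundedness of the coordinate projections of a Schauder decomposition.)

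Next I would enumerate $\{w^j_n : j\geqslant 1,\ n\in\N\}$ as a single sequence $(y_k)_{k\geqslant 0}$ of points of $S_X$. For each $k$, $G(y_0,\dots,y_{k-1})\leqslant G$, so the $G(y_0,\dots,y_{k-1})$-orbit of $y_k$ is a subset of the $\beta$-discrete $G$-orbit of $y_k$ and is therefore discrete; thus $(y_k)_k$ is a $G$-distinguished sequence. Its linear span is dense in $X$, and since the norm of $X$ is LUR it is LUR on a neighbourhood of $y_0$. As $G$ is a closed subgroup of ${\rm Isom}(X)$ containing $-\Id$, Theorem \ref{main} applies to $(y_k)_k$ and produces an equivalent norm $\triple\cdot$ on $X$ with $G={\rm Isom}(X,\triple\cdot)$, i.e.\ the representation is a display.

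I do not expect any serious obstacle, since the real work is already contained in Theorem \ref{main} and the Bellenot-type renorming behind it. The only points needing care are the verification that $G$ genuinely maps blocks to blocks --- so that the orbit of each $w^j_n$ stays inside the single family $\{\pm w^j_m : m\in\N\}$ --- and the extraction of one uniform separation constant $\beta$; I would also note that ``every $y_k$ has discrete $G$-orbit'' is formally stronger than ``$(y_k)_k$ is $G$-distinguished'', which is exactly why the stabilizers $G(y_0,\dots,y_{k-1})$ play no role.
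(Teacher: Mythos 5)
Your proposal is correct and follows essentially the same route as the paper: choose a dense-span sequence of unit vectors each supported in a single summand, observe that the block-permuting/sign-changing action makes each such orbit (uniformly) discrete, and invoke Theorem \ref{main}. The extra details you supply (the explicit separation constant via the symmetric norm or the uniform boundedness of the coordinate projections, and the remark that discrete $G$-orbits trivially give a $G$-distinguished sequence) are exactly what the paper leaves implicit.
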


\begin{proof} We may pick a sequence $(x_n)$ with dense linear span and such that each $x_n$ is in some summand of the decomposition. It follows that $Gx_n$ is discrete for each $n$. Therefore $(x_n)$ is a distinguished sequence with dense linear span and hence Theorem \ref{main} applies.
\end{proof}

\begin{thm} Any closed subgroup $G\leqslant S_{\infty}$ with a non-trivial central involution is displayable on $c_0$,  $C([0,1])$, $\ell_p$ and $L_p$,  for $1 \leqslant p<\infty$.
\end{thm}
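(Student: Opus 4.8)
The plan is to combine the representation $\rho$ constructed just before Lemma \ref{sym} with the embedding $\pi$ of Lemma \ref{sinfty} and then invoke Lemma \ref{sym}. Concretely, each of the spaces $c_0$, $C([0,1])$, $\ell_p$ and $L_p$ for $1\leqslant p<\infty$ has, after an equivalent renorming, a symmetric Schauder decomposition $X=\sum_{n\in\N}X_n$ into an infinite number of isometric copies of a fixed space $Y\neq\{0\}$: for $c_0$ and $\ell_p$ one simply takes $X_n=\R e_n$ (the unit vector basis is symmetric), while for $C([0,1])$ and $L_p$ one uses the well-known fact that $C([0,1])\iso\big(\sum_n C([0,1])\big)_{c_0}$ and $L_p\iso\big(\sum_n L_p\big)_{\ell_p}$ (again a symmetric $c_0$- resp. $\ell_p$-sum). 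Moreover each of these spaces admits an equivalent LUR norm — this is classical (Kadec for separable spaces in general, but here one can also just renorm the summand $Y$ to be LUR and take the corresponding symmetric sum, which remains LUR) — and one checks that the renorming can be chosen compatible with the symmetric decomposition so that the canonical change-of-signs-and-permutations action is still by isometries.

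First I would fix such a space $X$ with its symmetric LUR norm and symmetric Schauder decomposition $X=\sum_{n\in\N}X_n$, and consider the topologically faithful bounded representation $\rho\colon S_\infty\ltimes\Z_2^\N\til GL(X)$ described in the text, acting by permutation of the summands and change of signs. Next, given a closed subgroup $G\leqslant S_\infty$ with a non-trivial central involution $s$, apply Lemma \ref{sinfty} to obtain a topological group embedding $\pi\colon G\til S_\infty\ltimes\Z_2^\N$ with $\pi(s)=(1_{S_\infty},(-1,-1,\ldots))$. Composing, $\rho\circ\pi\colon G\til GL(X)$ is a topologically faithful bounded representation of $G$; after passing to the equivalent norm $\triple x=\sup_{g\in G}\norm{(\rho\circ\pi)(g)x}$ (which we may take to coincide with the symmetric LUR norm, since the representation is already by isometries of that norm), $G$ is represented as a closed subgroup of ${\rm Isom}(X)$ acting by change of signs and/or permutations of the coordinates relative to the symmetric decomposition.

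The one remaining point is that this representation of $G$ contains $-\Id$: indeed $(\rho\circ\pi)(s)=\rho\big(1_{S_\infty},(-1,-1,\ldots)\big)$ is precisely multiplication by $-1$ on every summand, i.e. $-\Id$. Hence all hypotheses of Lemma \ref{sym} are met — $X$ is separable real with an LUR norm, is a Schauder sum of infinitely many isometric copies of $Y$, and $G$ is realised as a closed subgroup of ${\rm Isom}(X)$ containing $-\Id$ and acting by sign changes and coordinate permutations — so Lemma \ref{sym} yields an equivalent norm $\triple\cdot$ with ${\rm Isom}(X,\triple\cdot)=(\rho\circ\pi)(G)\iso G$. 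This is exactly a display of $G$ on $X$, and since $X$ is isometric (or at least isomorphic, which is all that is needed for displayability) to the classical space in question, $G$ is displayable on each of $c_0$, $C([0,1])$, $\ell_p$, $L_p$, $1\leqslant p<\infty$.

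I expect the main obstacle to be purely bookkeeping rather than conceptual: one must be slightly careful that the LUR renorming of $X$, the symmetric decomposition, and the isometric action all coexist — i.e. that there is a single equivalent norm which is simultaneously LUR and symmetric with respect to the decomposition so that the canonical $S_\infty\ltimes\Z_2^\N$-action is isometric. For $\ell_p$ ($1<p<\infty$) the usual norm already works; for $\ell_1$, $c_0$, $C([0,1])$ and $L_1$ one renorms, but the standard constructions of LUR norms on these spaces can be carried out coordinate-wise or summand-wise so as to preserve the symmetry, which is the only delicate verification. Everything else is a direct concatenation of Lemma \ref{sinfty}, the representation $\rho$, and Lemma \ref{sym}.
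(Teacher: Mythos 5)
Your proposal is correct and follows essentially the same route as the paper: realise $G$ inside $S_\infty\ltimes\Z_2^\N$ via Lemma \ref{sinfty} (sending the central involution to $-\Id$), use the symmetric decompositions $c_0$, $\ell_p$, $C([0,1])\simeq c_0(C([0,1]))$, $L_p=\ell_p(L_p)$ together with a compatible LUR renorming making $\rho$ isometric, and conclude by Lemma \ref{sym}. The one delicate point you flag — the coexistence of the LUR norm with the symmetric isometric action — is exactly the point the paper handles by citing \cite{FG} (e.g.\ Day's norm on $c_0$).
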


\begin{proof} 
The spaces $c_0$ and $\ell_p$ have a symmetric basis, while $L_p$ and $C([0,1])$ have symmetric decompositions of the form $C([0,1]) \simeq c_0\big(C([0,1])\big)$ and $L_p=\ell_p(L_p)$. So let $X$ be one of these and let $X=\sum_{i\in \N}X_i$ be the corresponding Schauder decomposition into isometric copies $X_i$ of some space $Y$. 
We note that in \cite{FG} these spaces are shown to admit equivalent LUR norms such that $\rho \colon S_\infty\ltimes \Z_2^\N\til GL(X)$ defined above is an isometric representation (the original norm will do for $\ell_p$, $1<p<\infty$, and Day's norm will do for $c_0$). 
Moreover, by Lemma \ref{sinfty}, $G$ can be seen as a closed subgroup of $\rho(S_\infty\ltimes \Z_2^\N)$ containing $-\Id$ and hence by Lemma \ref{sym} this identification is a display of $G$ on $X$.
\end{proof}

Observe that if  $X$ is a separable Banach space and $G$ is a group of isometries on $X$ which  distinguishes some point $y\in X$, then it will distinguish any sequence of $X$ all of whose terms are sufficiently close to $y$.
This is an easy consequence of the fact that the set of points (not sequences) distinguished by $G$ is open in $X$. So $G$ will distinguish some sequence with dense linear span. Therefore,  we have obtained an easier proof of a result of \cite{FG}.

\begin{cor} Let $X$ be a separable real Banach space and $G$ be a group of isometries on $X$ containing $-\Id$. Assume that $X$ contains a  point $y$ distinguished by $G$, and that the norm is LUR in some neighbourhood of $y$. Then there exists an equivalent norm on $X$ such that $G={\rm Isom}(X,{\norm{\cdot}})$.
\end{cor}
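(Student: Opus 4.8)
The plan is to derive this corollary directly from Theorem~\ref{main}, exactly as the paragraph preceding the statement suggests. The essential point is to manufacture, from a single distinguished point $y$, a whole $G$-distinguished sequence $(x_n)_n$ of points of $S_X$ with dense linear span, all of whose terms lie inside the neighbourhood of $y$ on which the norm is LUR.

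First I would recall why the set $D$ of points distinguished by $G$ is open in $X$. If $x$ is distinguished, fix $\alpha>0$ with $\norm{Tx-x}\geqslant \alpha$ for all $T\in G\setminus\{\Id\}$; since every $T$ is an isometry, for $z$ with $\norm{z-x}<\alpha/3$ we get $\norm{Tz-z}\geqslant \norm{Tx-x}-2\norm{z-x}>\alpha/3$ for all $T\neq\Id$, so $z$ is distinguished as well. Hence $D$ is open, and by hypothesis $y\in D$ together with $y$ lying in an open set $U$ on which $\norm\cdot$ is LUR; intersecting, $y$ has an open neighbourhood $V\subseteq D\cap U$. Now I would choose a sequence $(x_n)_n$ in $S_X\cap V$ with dense linear span in $X$: this is possible because translating and rescaling a countable dense subset of a small ball around $y$ produces points arbitrarily close to $y$ whose linear span is dense, and after normalising they still lie in $V$ for a slightly smaller initial ball. (One should remember to throw in a vector to handle the trivial case $y=0$ if needed, but since $y\in S_X$ in the intended application this is not an issue; in general one replaces $y$ by any nonzero distinguished point, noting $-\Id\in G$ forces the distinguished set to avoid $0$ only when... — actually $0$ is fixed by all of $G$, so $0$ is never distinguished, and the hypothesis already rules it out.)

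Each $x_n$ lies in $D$, so the full $G$-orbit $Gx_n$ is discrete; in particular $G(x_0,\ldots,x_{n-1})x_n\subseteq Gx_n$ is discrete for every $n$, which is precisely the requirement that $(x_n)_n$ be $G$-distinguished. Passing to a subsequence we may assume the sequence is linearly independent (this only shrinks the span by nothing, since we may re-enumerate and keep density). Moreover $x_0\in V\subseteq U$, so $\norm\cdot$ is LUR in a neighbourhood of $x_0$. Thus all the hypotheses of Theorem~\ref{main} are met: $X$ is a separable real Banach space, $G$ is a closed subgroup of ${\rm Isom}(X,\norm\cdot)$ containing $-\Id$, and $(x_n)_n$ is a $G$-distinguished sequence of points of $S_X$ with dense linear span with the norm LUR near $x_0$. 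Theorem~\ref{main} then yields an equivalent norm $\triple\cdot$ with $G={\rm Isom}(X,\triple\cdot)$, which is the conclusion.

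The only genuinely substantive point — and hence the one I expect to spend the most care on — is the construction of the sequence with dense linear span living entirely inside $V$: one must check that density of the span is not lost by restricting to a small neighbourhood of $y$. This is handled by the standard observation that if $\{v_k\}$ is dense in $X$ then the vectors $y + \varepsilon_k v_k/(1+\norm{v_k})$, for suitable small $\varepsilon_k>0$, lie in $V$ and their linear span together with $y$ is dense (since from $y$ and $y+\varepsilon_k v_k/(1+\norm{v_k})$ one recovers each $v_k$ up to scalar). Normalising these vectors keeps them in $V$ provided the $\varepsilon_k$ were chosen small enough relative to the radius of $V$. Everything else is a direct invocation of the previously established machinery, and no new renorming argument is required beyond Theorem~\ref{main} itself.
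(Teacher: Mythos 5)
Your proof is correct and follows essentially the same route as the paper: one observes that the set of points distinguished by $G$ is open, chooses a normalized sequence with dense linear span inside the intersection of that set with the LUR neighbourhood of $y$ (so that every term has discrete $G$-orbit and the sequence is $G$-distinguished), and invokes Theorem~\ref{main}. The one detail worth making explicit is that the existence of a distinguished point makes $G$ discrete, hence closed, in the strong operator topology, which is needed to satisfy the closedness hypothesis of Theorem~\ref{main}.
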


Suppose now that $G$ is a bounded group of automorphisms of $X$ containing $-\Id$. Then $G$ will be a group of isometries for the equivalent norm $\sup_{g \in G}\|gx\|$. Furthermore, a result of G. Lancien in  \cite{L} asserts that any separable Banach space with the Radon-Nikodym Property (RNP) may be renormed with an equivalent LUR norm without diminishing the group of isometries. Using this fact we obtain generalizations of the previous results to the case of bounded groups of isomorphisms on spaces with the RNP. For example, Theorem \ref{main} implies the following.

\begin{cor} Let $X$ be a separable real Banach space with the Radon-Nikodym Property and $G$ be a closed bounded subgroup of $GL(X)$ containing $-\Id$. Assume that some sequence in $X$ is distinguished by $G$ and has dense linear span. Then there exists an equivalent norm
$\norm{\cdot}$ on $X$ such that $G={\rm Isom}(X,{\norm{\cdot}})$.
\end{cor}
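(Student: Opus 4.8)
The plan is to reduce this corollary to Theorem~\ref{main} by first renorming $X$ so that $G$ becomes a group of isometries, and then upgrading that norm to an LUR one without enlarging the isometry group, all while producing a distinguished sequence with dense linear span. First, since $G$ is a bounded subgroup of $GL(X)$, define $\norm{x}_1=\sup_{g\in G}\norm{gx}$; this is an equivalent norm on $X$ for which every $g\in G$ is an isometry, and $-\Id\in G$ still holds. Crucially, I must check that $G$ is still closed in ${\rm Isom}(X,\norm\cdot_1)$ for \textsc{sot} and, more importantly, that passing to $\norm\cdot_1$ does not destroy the distinguishedness hypothesis: a sequence distinguished by $G$ in the original norm is still distinguished by $G$ in $\norm\cdot_1$, because $\norm\cdot_1$ is equivalent to $\norm\cdot$ and the condition $\inf_{g}\norm{gx_n-x_n}>0$ on the relevant subgroup is a bi-Lipschitz-invariant statement. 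Likewise a sequence with dense linear span in $\norm\cdot$ still has dense linear span in $\norm\cdot_1$.

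Next, invoke the quoted result of G.~Lancien~\cite{L}: since $X$ has the Radon--Nikodym Property, $(X,\norm\cdot_1)$ admits an equivalent LUR norm $\norm\cdot_2$ such that ${\rm Isom}(X,\norm\cdot_2)\supseteq {\rm Isom}(X,\norm\cdot_1)\supseteq G$ — that is, without diminishing the group of isometries. In particular $G$ acts by $\norm\cdot_2$-isometries, $-\Id\in G$, and $\norm\cdot_2$ is LUR everywhere, hence in particular in a neighbourhood of $x_0$ (once we normalize the sequence). The sequence $(x_n)$ furnished by the hypothesis is, after rescaling each $x_n$ to lie on the $\norm\cdot_2$-unit sphere, still a $G$-distinguished sequence with dense linear span, since rescaling by fixed nonzero scalars and replacing the norm by an equivalent one preserves both the discreteness of each $G(x_0,\ldots,x_{n-1})$-orbit (an isometry fixing $x_0,\ldots,x_{n-1}$ fixes their rescalings and vice versa) and the density of the span.

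Finally, apply Theorem~\ref{main} to $(X,\norm\cdot_2)$, the closed subgroup $G\leqslant {\rm Isom}(X,\norm\cdot_2)$ containing $-\Id$, and the normalized $G$-distinguished sequence with dense linear span whose norm is LUR near $x_0$. This yields an equivalent norm $\triple\cdot$ on $X$ with $G={\rm Isom}(X,\triple\cdot)$, which is exactly the conclusion (renaming $\triple\cdot$ as $\norm\cdot$).

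The main obstacle I anticipate is the bookkeeping around closedness and the preservation of the distinguished-sequence hypothesis under the two successive renormings. One must be careful that "closed subgroup of $GL(X)$" in the hypothesis, combined with boundedness, really does give a \textsc{sot}-closed subgroup of ${\rm Isom}(X,\norm\cdot_1)$ — this uses that on a bounded set of $L(X)$ the \textsc{sot} topology from $\norm\cdot$ and from $\norm\cdot_1$ coincide — and that Lancien's theorem is being applied in the correct direction (it enlarges or preserves, never shrinks, the isometry group, so $G$ survives). None of these steps is deep, but the statement's economy hides several compatibility checks between the three norms $\norm\cdot$, $\norm\cdot_1$, $\norm\cdot_2$ that must be discharged before Theorem~\ref{main} can be cited verbatim.
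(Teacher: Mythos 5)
Your proposal is correct and follows essentially the same route as the paper: renorm by $\sup_{g\in G}\norm{gx}$ to make $G$ a group of isometries, apply Lancien's RNP renorming to get an everywhere-LUR norm without diminishing the isometry group, and then invoke Theorem~\ref{main}. The compatibility checks you flag (preservation of closedness, of the distinguished sequence, and of dense span under equivalent renormings and normalization) are all routine and correctly discharged.
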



\subsection{Displays and representations}

We shall now see that under certain mild conditions on a separable space $X$, any bounded, countable closed (or equivalently discrete), subgroup of $GL(X)$  containing $-\Id$ is  displayable on some power  of $X$.

\begin{prop}\label{Xn} Let $X$ be a separable real Banach space with an LUR norm and  $G$ be a discrete group of isometries on $X$ containing $-\Id$. Then $G$ is displayable on $X^n$ for some $n \in \N$.
\end{prop}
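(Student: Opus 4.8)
The plan is to apply Theorem \ref{main} to a suitable finite power $X^n$ after exhibiting a distinguished sequence with dense linear span for the diagonal-type action of $G$. Since $G$ is discrete and closed in ${\rm Isom}(X)$, and $X$ is separable, $G$ is countable; enumerate it as $G=\{g_0=\Id,g_1,g_2,\ldots\}$ (if $G$ is finite the argument only simplifies). The key point is that a single point of $X$ need not be distinguished by $G$, but by spreading the action over enough coordinates we can force the coordinatewise orbit map to be injective with discrete image. Concretely, since $G$ is discrete in the strong operator topology, for each pair $i\neq j$ there is $y\in X$ with $g_iy\neq g_jy$; a Baire-category or direct diagonal argument then produces finitely many vectors $y^{(1)},\ldots,y^{(n)}\in X$ such that the map $g\mapsto(gy^{(1)},\ldots,gy^{(n)})$ separates all of $G$ — in fact one can first find a countable separating family and then observe that discreteness of $G$ means a finite subfamily already works, or alternatively absorb the countably many coordinates into the construction and note the orbit stays discrete because $G$ is \textsc{sot}-discrete. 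This gives a vector $v=(y^{(1)},\ldots,y^{(n)})\in X^n$ that is \emph{distinguished} by the diagonal action of $G$ on $X^n$: $\inf_{g\in G\setminus\{\Id\}}\norm{gv-v}>0$ precisely because the orbit $Gv$ is discrete (it is a discrete subset of the Polish space $X^n$ on which $G$ acts with injective orbit map, and $G$ is \textsc{sot}-closed).

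Next I would arrange the LUR hypothesis. The norm on $X$ being LUR, the sup- or $\ell_2$-sum norm $\big(\sum_i\norm{x_i}^2\big)^{1/2}$ on $X^n$ is again LUR (LUR is stable under such finite sums of LUR norms), and the diagonal copy of $G$ together with $-\Id$ acts by isometries for it; note $-\Id\in G$ on $X$ gives $-\Id$ on $X^n$. Now I would perturb: by the observation immediately preceding Corollary (the one stating that the set of points distinguished by $G$ is open, so a point close to a distinguished point and with dense-span-generating neighbours is still distinguished), since $v$ is distinguished by the diagonal $G$ on $X^n$, every point sufficiently close to $v$ is also distinguished, and hence $G$ distinguishes some sequence $(v_k)_k$ in $X^n$ all of whose terms are close to $v$ and whose linear span is dense in $X^n$ — such a sequence exists because distinguished points form an open set and $X^n$ is separable, so we may pick $v_k\to$ (a dense set) while staying in that open set, with $v_0$ chosen in a region where the norm is LUR.

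With this data in hand — a separable real Banach space $X^n$ with an LUR norm, a closed subgroup of ${\rm Isom}(X^n)$ containing $-\Id$ (the diagonal $G$), and a $G$-distinguished sequence with dense linear span whose first term lies in an LUR neighbourhood — Theorem \ref{main} applies verbatim and yields an equivalent norm $\triple\cdot$ on $X^n$ with $G={\rm Isom}(X^n,\triple\cdot)$. The inclusion $G\hookrightarrow GL(X^n)$ is a display. The main obstacle is the first step: producing finitely many coordinates on which the action of the countable discrete group $G$ becomes jointly separating with a uniform lower bound on orbit separation. The resolution is exactly that \textsc{sot}-discreteness of $G$ is preserved under passing to a (faithful) subrepresentation that separates points, together with the fact that a closed discrete subgroup of a Polish group is countable and its orbits on a Polish space on which it acts with trivial stabilizers are automatically discrete; one must be slightly careful that $n$ can be taken finite, which is where discreteness (as opposed to mere faithfulness) of $G$ is used — a finite separating set of coordinates suffices because any injective continuous map from a countable discrete group is a topological embedding.
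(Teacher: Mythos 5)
Your proposal is correct and follows essentially the same route as the paper: use discreteness of $G$ to extract finitely many test vectors with a uniform separation constant, pass to the diagonal action on $X^n$ with the $\ell_2$-sum norm (which is again LUR), and apply the distinguished-point/distinguished-sequence renorming theorem. The only remark is that your first step can be stated more directly — since $\Id$ is \textsc{sot}-isolated in $G$, some basic neighbourhood $\{T : \norm{Tx_i-x_i}<\alpha,\ i=1,\dots,n\}$ meets $G$ only in $\Id$, which immediately yields the finite family and the uniform $\alpha$ without any Baire-category or countable-subfamily detour (and without the dubious claim that an injective continuous map from a discrete group is automatically an embedding).
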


\begin{proof}
Since $\Id$ is isolated in $G$, 
we may find $\alpha>0$ and $x_1,\ldots,x_n \in X$ such that for 
any $g \neq \Id$, $\norm{gx_i-x_i} \geqslant \alpha$ for some $i=1,\ldots,n$.
If we let $G$ act on $X^n$ by $g(y_1,\ldots,y_n)=(gy_1,\ldots,gy_n)$ and equip $X^n$ with the $\ell_2$-sum of the norm on $X$, we see that $G$ can be identified with a closed subgroup of ${\rm Isom}(X^n)$ containing $-\Id$ and such that the point $x_0=(x_1,\ldots,x_n)$ is distinguished by $G$. Since, by classical arguments, the norm on $X^n$ is again LUR (see Fact 2.3 p. of \cite{DGZ}), Theorem \ref{FG} then applies to prove that $G$ is displayable on $X^n$.
\end{proof}

\begin{cor} Let $X$ be a separable real Banach space with an LUR norm and isomorphic to its square. Then any discrete group of isometries on $X$ containing $-\Id$ is displayable on $X$.
\end{cor}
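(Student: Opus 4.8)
The plan is to reduce the statement to the previous proposition. Let $X$ be a separable real Banach space with an LUR norm such that $X \iso X^2$, and let $G$ be a discrete group of isometries on $X$ containing $-\Id$. By Proposition \ref{Xn}, $G$ is displayable on $X^n$ for some $n \in \N$, so there is an equivalent norm $\triple\cdot$ on $X^n$ with $\text{Isom}(X^n,\triple\cdot)$ topologically isomorphic to $G$. The task is to transfer this display from $X^n$ back to $X$ itself.

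First I would observe that from $X \iso X^2$ one gets $X \iso X^{2^k}$ for every $k$, and in particular $X \iso X^m$ for any $m$ that is a power of $2$; more simply, writing $X \iso X \oplus X$ and iterating, $X \iso X^n \oplus X^{n'}$ for suitable splittings, and a standard Pelczynski-type decomposition argument gives $X \iso X^n$ for every $n \geqslant 1$ (since $X \iso X^2 \iso X^4 \iso \cdots$, and $X^n$ is complemented in some $X^{2^k}$ which in turn is isomorphic to $X$, while $X$ is visibly complemented in $X^n$; if $X^n \iso X \oplus W$ and $X \iso X^n \oplus V$ one concludes $X \iso X^n$). Thus fix a linear isomorphism $u \colon X \til X^n$.

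Next I would push the norm forward: define $\|x\|' = \triple{u(x)}$ for $x \in X$. This is an equivalent norm on $X$, and $u$ becomes an isometry from $(X,\|\cdot\|')$ onto $(X^n,\triple\cdot)$. Conjugation by $u$ then gives a topological group isomorphism $\text{Isom}(X,\|\cdot\|') \iso \text{Isom}(X^n,\triple\cdot)$, the latter being (topologically isomorphic to) $G$. Hence $G$ is displayable on $X$, which is the assertion.

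The only genuine point requiring care is the decomposition argument yielding $X \iso X^n$ from $X \iso X^2$; everything else is formal transport of structure along the isomorphism $u$. This is routine and well known (it is exactly the kind of Pelczynski decomposition argument used throughout Banach space theory), so I would simply cite it or carry it out in one line. One should also note in passing that the display obtained respects the required normalization: the isomorphism $\text{Isom}(X,\|\cdot\|') \iso G$ is topological because $u$ and $u\inv$ are bounded, so conjugation is continuous for the strong operator topologies on both sides, and it sends the copy of $\{-\Id, \Id\}$ to itself since $u$ is linear.
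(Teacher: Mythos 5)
Your argument is correct and is evidently the intended one (the paper states this corollary without proof, immediately after Proposition \ref{Xn}): obtain the display on $X^n$, note $X\iso X^n$, and transport the norm and the isometry group along a linear isomorphism $u\colon X\til X^n$. The only remark is that the Pe{\l}czy\'nski decomposition machinery is unnecessary here, since $X\iso X^2$ gives $X\iso X^n$ by the one-line induction $X^{k+1}=X^k\oplus X\iso X\oplus X\iso X$.
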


Note here that the original representation itself need not be a display.

\begin{prop} Let $X$ be a separable real Banach space with the Radon-Nikodym Property and $G$ be a discrete bounded subgroup of $GL(X)$ containing $-\Id$. Then $G$ is displayable on $X^n$ for some $n \in \N$.
\end{prop}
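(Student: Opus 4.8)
The plan is to reduce this statement to Proposition~\ref{Xn} by the same device used in the passage immediately preceding the corollary it follows. First I would recall that if $G$ is a bounded subgroup of $GL(X)$ — that is, $\sup_{g\in G}\|g\|<\infty$ — then the renorming $\triple{x}=\sup_{g\in G}\|gx\|$ is equivalent to the original norm and turns every element of $G$ into a $\triple\cdot$-isometry. Since $-\Id\in G$, the group $G$ is then a discrete subgroup of ${\rm Isom}(X,\triple\cdot)$ containing $-\Id$.

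The remaining obstacle is that $\triple\cdot$ need not be LUR, which is required to invoke Proposition~\ref{Xn}. Here I would use the result of G.~Lancien cited in the text: any separable Banach space with the Radon-Nikodym Property may be renormed with an equivalent LUR norm \emph{without diminishing the group of isometries}. Applying this to $(X,\triple\cdot)$ — note $X$ has the RNP, which is a purely isomorphic property, so $\triple\cdot$ also enjoys it — we obtain an equivalent LUR norm $\|\cdot\|'$ on $X$ such that ${\rm Isom}(X,\triple\cdot)\subseteq {\rm Isom}(X,\|\cdot\|')$; in particular $G\leqslant {\rm Isom}(X,\|\cdot\|')$. Since $G$ was discrete in ${\rm Isom}(X,\triple\cdot)$ for \textsc{sot} and this topology only depends on the isomorphic structure (convergence of $g_\alpha x\to gx$ in norm is norm-independent up to equivalence), $G$ remains a discrete group of isometries of $(X,\|\cdot\|')$ containing $-\Id$.

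Now Proposition~\ref{Xn} applies directly to the separable real space $(X,\|\cdot\|')$ with its LUR norm and the discrete isometry group $G$ containing $-\Id$: it yields an $n\in\N$ and an equivalent norm on $X^n$ whose isometry group is exactly $G$, which is precisely the assertion that $G$ is displayable on $X^n$. The main (and essentially only) point requiring care is the invocation of Lancien's theorem to pass from the bounded-group renorming to an LUR renorming without enlarging the isometry group beyond what $G$ requires; once that is in place the argument is a formality paralleling the corollary immediately above in the text.

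\begin{proof}
Since $G$ is bounded, the equivalent norm $\triple{x}=\sup_{g\in G}\|gx\|$ makes every element of $G$ a $\triple\cdot$-isometry, so $G$ is a discrete subgroup of ${\rm Isom}(X,\triple\cdot)$ containing $-\Id$. As the Radon-Nikodym Property is preserved under equivalent renormings, $(X,\triple\cdot)$ has the RNP, and by Lancien's theorem \cite{L} it admits an equivalent LUR norm $\|\cdot\|'$ with ${\rm Isom}(X,\triple\cdot)\subseteq{\rm Isom}(X,\|\cdot\|')$. Hence $G$ is a discrete group of isometries of the separable real space $(X,\|\cdot\|')$ containing $-\Id$, and Proposition \ref{Xn} provides $n\in\N$ and an equivalent norm on $X^n$ with isometry group $G$. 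Thus $G$ is displayable on $X^n$.
\end{proof}
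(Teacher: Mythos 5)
Your proof is correct and follows exactly the paper's own argument: renorm by $\sup_{g\in G}\|gx\|$ to make $G$ a group of isometries, invoke Lancien's theorem (using the RNP) to pass to an equivalent LUR norm without diminishing the isometry group, and then apply Proposition \ref{Xn}. The extra remarks on the RNP and \textsc{sot}-discreteness being renorming-invariant are correct and only make the argument more explicit.
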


\begin{proof} We renorm $X$ with the norm $\sup_{g \in G}\|gx\|$ to make any element of $G$ into an isometry. Since $X$ has the Radon-Nikodym Property, we may apply the result of Lancien to obtain a LUR norm on $X$ for which each element of $G$ is still an isometry. Then we may apply Proposition \ref{Xn}.
\end{proof}

Finally we may conclude that for spaces with the Radon-Nikodym Property and isomorphic to their square, and for countable groups, the necessary conditions of the second section are sufficient for the existence of a display.

\begin{cor} Let $X$ be separable real Banach space with the Radon-Nikodym Property and isomorphic to its square, and let $G$ be a countable topological group. Then $G$ is displayable on $X$ if and only if it is faithfully topologically representable as a discrete bounded subgroup of $GL(X)$ containing $-\Id$.
\end{cor}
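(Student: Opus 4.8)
The plan is to combine the necessary conditions of Section 2 with the preceding three propositions in this subsection. The two directions are essentially bookkeeping once the right results are cited, so I would organize the proof as follows.

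\textbf{($\Leftarrow$).} Suppose $G$ is faithfully topologically represented as a discrete bounded subgroup of $GL(X)$ containing $-\Id$. Since $X$ has the Radon-Nikodym Property and is isomorphic to its square, the Proposition immediately preceding this corollary applies: $G$ is displayable on $X^n$ for some $n$. But $X\iso X^2$ forces $X\iso X^n$ for every $n\geqslant 1$, so a display of $G$ on $X^n$ transports, via the isomorphism $X^n\iso X$, to a display of $G$ on $X$ itself. (Concretely, if $\triple\cdot$ is an equivalent norm on $X^n$ with ${\rm Isom}(X^n,\triple\cdot)=G$ and $u\colon X\til X^n$ is a linear isomorphism, then $\triple{x}':=\triple{u(x)}$ is an equivalent norm on $X$ with isometry group $u^{-1}Gu$, which is the display we want after conjugating the representation.) This gives displayability of $G$ on $X$.

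\textbf{($\Rightarrow$).} Conversely, suppose $G$ is displayable on $X$, so that there is a topologically faithful representation $\rho\colon G\til GL(X)$ with $\rho(G)={\rm Isom}(X,\triple\cdot)$ for some equivalent norm $\triple\cdot$. Then $\rho(G)$, being the isometry group of the equivalent norm $\triple\cdot$ on $X$, is automatically a bounded subgroup of $GL(X)$ (it is $\norm\cdot$-bounded because $\triple\cdot$ and $\norm\cdot$ are equivalent), it is \textsc{sot}-closed in $GL(X)$, and by Stern's observation (recorded in Proposition \ref{necessary}(i)) it contains $-\Id$. It remains to see that $\rho(G)$ is discrete, equivalently countable: since $X$ is separable, ${\rm Isom}(X,\triple\cdot)$ is a Polish group, and as $G$ is a countable topological group topologically isomorphic to $\rho(G)$, the group $\rho(G)$ is a countable Polish group, hence discrete (a countable Polish group has no perfect subset, so every point, in particular the identity, is isolated). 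Thus $\rho$ witnesses $G$ as a faithful topological representation as a discrete bounded subgroup of $GL(X)$ containing $-\Id$.

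The only point requiring any care is the transport-of-structure step in ($\Leftarrow$): one must check that the $\ell_2$-sum norm used in Proposition \ref{Xn} together with the isomorphism $X\iso X^n$ does not introduce extra isometries — but this is handled entirely inside Proposition \ref{Xn} and the isomorphism merely relabels the space, so no genuine obstacle arises. The remainder is a routine invocation of Proposition \ref{necessary}, the Polishness of ${\rm Isom}(X,\triple\cdot)$ for separable $X$ noted at the start of Section 2, and the standard fact that a countable Polish group is discrete.
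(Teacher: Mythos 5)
Your proof is correct and follows exactly the route the paper intends (the corollary is stated without proof, as an immediate consequence of the preceding proposition together with $X\iso X^2\Rightarrow X\iso X^n$ for the sufficiency, and of boundedness, closedness, $-\Id\in{\rm Isom}(X,\triple\cdot)$ and the Baire-category fact that a countable Polish group is discrete for the necessity). Both directions, including the transport-of-norm step via an isomorphism $u\colon X\til X^n$ and the conjugation of the representation, are handled correctly.
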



\section{Continuous groups of transformations of $\R^2$}

In this section we look at the displayability some of the simplest uncountable compact metric groups, namely, the circle group $\T=\{z\in \C\del |z|=1\}$, and the orthogonal group $O(2)=\{U\in M_2(\R)\del UU^t=U^tU=1\}$.

Recall that a real Banach space $X$ is said to admit a {\em complex structure} when there exists some operator $J$ on $X$ such that $J^2=-\Id$. This means that $X$ may be seen as a complex space, with the scalar multiplication  defined by $$(a+ib)\cdot x=ax+bJx$$ and the equivalent renorming
$$\triple{x}=\sup_{0 \leqslant \theta \leqslant 2\pi}\|\cos \theta x +\sin \theta Jx\|.$$

If $X$ is isomorphic to a Cartesian square $Y^2$, then it admits a complex structure,
associated to the operator $J$ defined by $J(y,z)=(-z,y)$ for  $y,z \in Y$.  But examples of real spaces admitting a complex structure without being isomorphic to a Cartesian square also exist, see e.g. \cite{F} about these.

\begin{thm} Let $X$ be a real Banach space. Then $\T$  is displayable on $X$ if and only if $X$ admits a complex structure and $\dim X>2$.
\end{thm}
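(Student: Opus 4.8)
The plan is to establish the two implications separately: for the necessity direction I would extract a complex structure from the representation and then rule out small dimensions by a compact-linear-group argument, while the sufficiency direction rests on Jarosz' renorming theorem for complex Banach spaces.

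\emph{Necessity.} Suppose $\rho\colon\T\til{\rm Isom}(X,\triple\cdot)$ is a display for some equivalent norm $\triple\cdot$, and set $J=\rho(e^{i\pi/2})$. Since $\rho$ is a homomorphism, $J^2=\rho(e^{i\pi})=\rho(-1)$; on the other hand $-\Id$ is an element of order $2$ of ${\rm Isom}(X,\triple\cdot)=\rho(\T)$, and $-1$ is the unique element of order $2$ in $\T$, so injectivity of $\rho$ forces $\rho(-1)=-\Id$. Hence $J^2=-\Id$ and $X$ admits a complex structure. To see that $\dim X>2$, observe that if $\dim X\leqslant 1$ then ${\rm Isom}(X,\triple\cdot)$ has at most two elements and cannot be isomorphic to $\T$, while if $\dim X=2$ then ${\rm Isom}(X,\triple\cdot)$ is a compact subgroup of $GL_2(\R)$, hence conjugate inside $GL_2(\R)$ to a closed subgroup of $O(2)$; the only closed subgroup of $O(2)$ topologically isomorphic to $\T$ is $SO(2)$, so after a linear change of coordinates we would obtain a norm on $\R^2$ whose isometry group equals $SO(2)$. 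But a rotation-invariant norm on $\R^2$ is a multiple of the euclidean norm, whose isometry group is $O(2)\supsetneq SO(2)$, a contradiction. Thus $\dim X>2$.

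\emph{Sufficiency.} Conversely, assume $X$ carries a complex structure $J$ and $\dim X>2$. Following the remark preceding the theorem, put $\triple x_0=\sup_{0\leqslant\theta\leqslant 2\pi}\|\cos\theta\,x+\sin\theta\, Jx\|$; then $\triple\cdot_0$ is an equivalent norm and $Z:=(X,\triple\cdot_0)$ becomes a \emph{complex} Banach space under $(a+ib)\cdot x=ax+bJx$, on which $\T$ acts by scalar multiplication through isometries. Since $\dim_\R X$ is $>2$ and even when finite (being twice $\dim_\C Z$), and infinite otherwise, we have $\dim_\C Z\geqslant 2$, so Jarosz' theorem \cite{J} supplies an equivalent complex norm $\triple\cdot$ on $Z$ for which every surjective real-linear isometry of the underlying space $X$ is a scalar multiple $\lambda\Id$ with $|\lambda|=1$; equivalently, ${\rm Isom}(X,\triple\cdot)=\{\cos\theta\,\Id+\sin\theta\, J\colon 0\leqslant\theta<2\pi\}$. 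Now the map $\rho\colon\T\til{\rm Isom}(X,\triple\cdot)$ given by $\rho(e^{i\theta})=\cos\theta\,\Id+\sin\theta\, J$ is a homomorphism (because $\T$ acts on $Z$), it is injective since $J$ is not a scalar operator, it is norm-continuous, and it is onto by the above description; being a continuous bijection from a compact group onto a Hausdorff topological group, it is a topological isomorphism. Therefore $\rho$ is a display of $\T$ on $X$.

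\emph{The main obstacle.} The delicate point is the appeal to Jarosz' theorem in the sufficiency direction: one must know that his renorming of $Z$ not only keeps every scalar $\lambda\Id$ isometric but destroys \emph{every} other surjective real-linear isometry, in particular every conjugate-linear one. This is precisely where $\dim_\C Z\geqslant 2$ — that is, $\dim X>2$ — enters, and it is also why the hypothesis cannot be relaxed to $\dim X\geqslant 2$: on a two-dimensional space the symmetry under complex conjugation cannot be removed, exactly as the necessity argument shows. One should also carry out the routine verifications that the passage to $\triple\cdot_0$ does not change the isomorphism type of $X$, that the operators $\cos\theta\,\Id+\sin\theta\, J$ are indeed the maps implementing the $\T$-action on $Z$, and that $J$ not being a scalar makes $\rho$ injective — so that the displayed isometry group really is all of $\rho(\T)$.
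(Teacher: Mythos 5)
Your necessity argument is correct and is essentially the paper's own: you extract $J=\rho(i)$ with $J^2=\rho(-1)=-\Id$, and you rule out $\dim X=2$ by conjugating the compact group ${\rm Isom}(\R^2,\triple\cdot)$ into $O(2)$, identifying its image with $SO(2)$, and noting that an $SO(2)$-invariant norm on $\R^2$ is a multiple of the euclidean norm and hence has the strictly larger isometry group $O(2)$. (The paper phrases the last step by exhibiting a reflection as an extra isometry of the original norm, but the content is identical.)

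The sufficiency direction, however, contains a genuine gap, which you have located yourself but not closed. Jarosz's theorem for a complex Banach space $Z$ produces an equivalent norm whose only surjective \emph{complex-linear} isometries are the unimodular scalars $\lambda\Id$. But the group ${\rm Isom}(X,\triple\cdot)$ in the definition of a display of the \emph{real} space $X$ consists of all surjective \emph{real-linear} isometries, and a real-linear isometry of $Z$ need not be complex-linear: it could be conjugate-linear, or neither (conjugating $J$ by a real-linear isometry $S$ gives another isometric complex structure $SJS^{-1}$, which need not commute or anticommute with $J$, so no formal reduction to Jarosz's statement is available). Hence your assertion that ${\rm Isom}(X,\triple\cdot)=\{\cos\theta\,\Id+\sin\theta\,J\}$ does not follow from the cited theorem; one must additionally prove that the renorming destroys every non-complex-linear real isometry when $\dim_\C Z\geqslant 2$, and that is precisely the non-trivial content of the implication (it fails for $\dim_\C Z=1$, where conjugation survives every renorming). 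The paper does not reprove this step either --- it cites Corollary 45 of \cite{FG} for the whole ``if'' direction --- but your write-up presents the implication as established while leaving exactly this step as an unargued ``obstacle''. To complete your route you would need either to verify that Jarosz's construction (or a modification of it) also excludes conjugate-linear and general real-linear isometries in complex dimension at least $2$, or simply to invoke \cite{FG} as the paper does.
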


\begin{proof} The `if' part was proved in \cite{FG}, Corollary 45. 
For the `only if' part, assume $\rho\colon \T\til GL(X)$ is a display of $X$ and let $\norm\cdot$ be the corresponding norm on $X$. Then, as $-1$ is the unique non-trivial involution in $\T$, we must have $\rho(-1)=-\Id$, whereby $\rho(i)^2=\rho(i^2)=-\Id$ and hence $X$ will admit a complex structure.

Now, assume towards a contradiction that $\dim X=2$, i.e., $X=\R^2$. Since $\T$ is compact, by  a standard result of representation theory, $\rho$ is orthogonalisable, meaning that there is a $T\in GL(\R^2)$ such that $T\rho(\lambda)T\inv$ is an isometry for the euclidean norm $\norm\cdot_2$ for any $\lambda\in \T$. In other words, 
$$
T\rho T\inv \colon \T\til O(2)
$$
and, as $\T$ is connected, so is its image, which thus must lie in the connected component of $\Id$, which is the group $SO(2)\isom\T$ of rotations of $\R^2$. Again, as $T\rho T\inv$ has non-trivial image, it must be surjective.

On the other hand, if $U\in O(2)\setminus SO(2)$, then for any $x\in \R^2$ there is $B\in SO(2)$ such that $U(Tx)=B(Tx)$, and, therefore, if $T\rho(\lambda) T\inv=B$, 
$$
\norm{T\inv UTx}=\norm{T\inv BTx}=\norm{\rho(\lambda)x}=\norm x.
$$
It follows that $T\inv UT$ is an isometry for $\norm\cdot$ not belonging to $\rho(\T)$, contradicting that $\rho(\T)={\rm Isom}(\R^2,\norm\cdot)$.
\end{proof}

\begin{thm} Let $X$ be a non-trivial real Banach space. Then $O(2)$ is displayable on $X$ if and only if $X$ is isomorphic to a Cartesian square.
\end{thm}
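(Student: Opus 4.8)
The plan is to establish the two directions separately, the `only if' direction being a direct generalization of the argument for $\T$ given just above. For the `if' direction, suppose $X\cong Y^2$. Then, as noted in the discussion of complex structures, $X$ admits the operator $J(y,z)=(-z,y)$ with $J^2=-\Id$; together with the involution $\sigma(y,z)=(y,-z)$ one obtains a representation of $O(2)$ on $X$. Concretely, identify $SO(2)$ with the one-parameter group generated by $J$ via $\rho(R_\theta)=\cos\theta\,\Id+\sin\theta\,J$, and send the reflection to $\sigma$; one checks $\sigma J\sigma=-J$, so this is a genuine (bounded, hence after renorming isometric) representation of $O(2)$ on $X$. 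The task is then to renorm $X$ so that this copy of $O(2)$ is the \emph{full} isometry group. This should follow from Theorem \ref{main} applied to the group $G=\rho(O(2))$: one must produce a $G$-distinguished sequence $(x_n)$ with dense linear span in $S_X$, with the norm LUR near $x_0$. The point is that for a generic vector $x$, its $O(2)$-orbit is a topological circle, which is \emph{not} discrete — so the naive approach fails, and instead one should pass to $X^k$ for suitable $k$, or rather observe that $X$ being a square allows us to split off enough ``room''. Here the cleaner route is: since $O(2)$ acts on the $2$-dimensional subspaces spanned by $\{v,Jv\}$, and a point in such a plane has non-discrete orbit, we instead use the \emph{reflection} part: the subgroup $\{\Id,\sigma,-\Id,-\sigma\}$ is finite, and picking $x_n$ generic, $G(x_0,\dots,x_{n-1})$-orbits can be made discrete provided each $x_n$ breaks the rotational symmetry. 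I expect one must argue as in Proposition \ref{Xn}/Lemma \ref{sym}: choose the $x_n$ supported on distinct ``coordinate directions'' of a Schauder-type decomposition adapted to $Y^2$, so that each successive stabilizer in $G$ is forced to be trivial after finitely many steps, whence the sequence is $G$-distinguished.

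For the `only if' direction, suppose $\rho\colon O(2)\til GL(X)$ is a display with corresponding norm $\norm\cdot$. As $-\Id_{O(2)}$ is the unique non-trivial central involution of $O(2)$ (it is the center), and $-\Id$ is central in ${\rm Isom}(X,\norm\cdot)$, the necessary condition forces $\rho(-\Id_{O(2)})=-\Id_X$. Restricting to the rotation subgroup, let $J=\rho(R_{\pi/2})$; then $J^2=\rho(R_\pi)=\rho(-\Id_{O(2)})=-\Id_X$, so $X$ carries a complex structure and in particular $X\cong$ its ``complexification'' in the weak sense — but we need $X$ actually isomorphic to a square. The key extra ingredient is the reflection $U\in O(2)$, $U^2=\Id$, with $\rho(U)$ an involutive isometry. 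Decompose $X=X_+\oplus X_-$ into the $\pm1$-eigenspaces of $\rho(U)$. Since $\rho(U)$ conjugates rotations to their inverses, $\rho(U)J\rho(U)=-J$, which means $J$ interchanges $X_+$ and $X_-$: if $\rho(U)v=v$ then $\rho(U)Jv=-Jv$, and $J$ restricted to $X_+$ is an isomorphism onto $X_-$ with inverse $-J\!\upharpoonright_{X_-}$. Hence $X=X_+\oplus X_-\cong X_+\oplus X_+=X_+^2$, exhibiting $X$ as a Cartesian square.

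The step I expect to be the main obstacle is the `if' direction — specifically, verifying that $\rho(O(2))$ can be realized as the \emph{entire} isometry group, i.e., producing the $G$-distinguished sequence for Theorem \ref{main}. The subtlety is genuine: unlike the discrete-group case, $O(2)$ has continuous orbits, so $x_0$ itself cannot be a distinguished point and the first stabilizer $G(\emptyset)=G$ is all of $O(2)$; one needs that the sequence of stabilizers $G(x_0,\dots,x_{n-1})$ eventually becomes trivial with the successive orbits of $x_n$ discrete in each stage. Concretely, after fixing one generic vector $x_0$ the stabilizer in $SO(2)$ is already trivial (a rotation fixing a nonzero planar vector is the identity), so $G(x_0)$ is contained in the finite group $\{\Id,\rho(U'),-\Id,-\rho(U')\}$ for a suitable reflection $U'$; then fixing one further generic $x_1$ not lying in the fixed subspace of that reflection kills it, and $G(x_0,x_1)=\{\Id,-\Id\}$, which already contains $-\Id$ as required. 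Thus only the first two vectors need care and the remaining $x_n$ are chosen freely to give dense span; the LUR hypothesis near $x_0$ is arranged by first putting a (bounded-group-invariant, hence isometric) LUR renorming on $X$ using the RNP or Lancien's theorem if needed, though since $X\cong Y^2$ we may instead directly transport an LUR norm. Once the distinguished sequence is in hand, Theorem \ref{main} yields the renorming $\triple\cdot$ with ${\rm Isom}(X,\triple\cdot)=\rho(O(2))$, completing the proof.
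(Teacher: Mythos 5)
Your `only if' direction is correct and is essentially the paper's argument: the paper also sets $J=\rho(R_{\pi/2})$, $M=\rho(S)$, derives the anticommutation of $J$ and $M$ (it writes this as $JMJ=M$, which given $J^2=-\Id$ is the same as your $MJM=-J$), splits $X$ along the projections $P=(\Id+M)/2$, $Q=(\Id-M)/2$ onto the $\pm1$-eigenspaces of $M$, and checks that $QJ\!\upharpoonright_{PX}$ is an isomorphism onto $QX$ with inverse $-PJ\!\upharpoonright_{QX}$. Your eigenspace phrasing is the same proof.

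The `if' direction, however, contains a genuine gap. You propose to apply Theorem \ref{main} to $G=\rho(O(2))$, and you correctly notice that generic $O(2)$-orbits are circles, but the fix you suggest does not work. The definition of a $G$-distinguished sequence requires, for \emph{every} $n\geqslant 0$, that the $G(x_0,\ldots,x_{n-1})$-orbit of $x_n$ be discrete; for $n=0$ this is the full $G$-orbit of $x_0$. Since the orbit map is continuous and $O(2)$ is compact and has a nontrivial connected component, $Gx_0$ is a continuous image of a circle, hence connected; a discrete connected set is a point, so the condition forces $x_0$ (and then inductively every $x_n$) to be fixed by all of $G$, which is incompatible with $-\Id\in G$ and dense linear span. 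So no $G$-distinguished sequence exists for $\rho(O(2))$ and Theorem \ref{main} is simply not applicable to connected (or, more generally, non-\textsc{sot}-discrete) groups. A minor symptom of the same confusion: you assert that the stabilizer $G(x_0,x_1)$ equals $\{\Id,-\Id\}$, but $-\Id$ never fixes a nonzero vector, so no stabilizer of a point can contain it. The paper does not prove the `if' direction by these methods at all; it cites Corollary 46 of \cite{FG}, where the display of $O(2)$ on a Cartesian square is constructed by a different (complex-structure/renorming) argument. Your representation $\rho(R_\theta)=\cos\theta\,\Id+\sin\theta\,J$ together with $\sigma$ is the right starting point, but making it the \emph{entire} isometry group requires machinery other than the distinguished-sequence theorem.
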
 

\begin{proof} The `if' part was proved in \cite{FG}, Corollary 46. 

For the `only if' part, assume that $\rho \colon O(2)\til GL(X)$ is a display of the group $O(2)={\rm Isom}(\R^2,\norm\cdot_2)$ on $X$.  Since $-\Id_{\R^2}$ is the unique non-trivial central involution in $O(2)$, we have $\rho(-\Id_{\R^2})=-\Id$. So, if $R$ denotes the rotation of angle $\pi/2$ on $\R^2$ and   $J=\rho(R)$, we have $J^2=-\Id$. Also, letting $S$ denote the reflection of $\R^2$ about the $x$-axis and $M=\rho(S)$, we have $JMJ=\rho(RSR)=M$.

Now, as $M^2=\Id$, $P=(\Id+M)/2$ is idempotent, i.e., a projection with complementary projection $Q=(\Id-M)/2$ and associated decomposition $X=Y \oplus Z$, $Y=PX$ and $Z=QX$. 

Define $T \in L(Y,Z)$ by
$Ty=QJy$ and $U \in L(Z,Y)$ by $Uz=-PJz$.
Then for any $y \in Y$,
$$
2UTy=-2PJQJy=-PJ(\Id-M)Jy=P(\Id+JMJ)y=P(\Id+M)y=2Py=2y,
$$
whence $UT= \Id_Y$. Likewise $TU= \Id_Z$, which implies that $Y$ and $Z$ are isomorphic.
In other words, $X$ is isomorphic to a Cartesian square.
\end{proof}


\section{Representation of Polish groups and universality}

\begin{thm} \label{universal}
For any Polish group $G$, there exists a separable Banach space $X$ such that $\{-1,1\} \times G$ is displayable on $X$.
\end{thm}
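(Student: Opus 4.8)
The plan is to mimic the $c_0$ construction of Section~3, but over a \emph{metric} structure rather than a graph. Recall first that every Polish group is topologically isomorphic to the isometry group of a Polish metric space (one may also start from Uspenskij's universality of ${\rm Isom}(\mathbb U_1)$, at the cost of some extra work later to cut the symmetry group down to $G$). So fix a Polish metric space $(M,d)$ with ${\rm Isom}(M,d)\iso G$, rescaled so that ${\rm diam}\, M\leqslant\frac12$, and adjoin a new point $*$ with $d(*,x)=1$ for all $x\in M$. The triangle inequality holds since distances inside $M$ are at most $\frac12$, and $*$ is the unique point of $M^+:=M\cup\{*\}$ at distance $1$ from every other point, so each isometry of $M^+$ fixes $*$ and restricts to an isometry of $M$; hence ${\rm Isom}(M^+)={\rm Isom}(M,d)\iso G$.

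Next let $Z:=\mathcal F(M^+)$ be the Lipschitz-free (Arens--Eells) space over the pointed space $(M^+,*)$: a separable real Banach space in which $M^+$ sits isometrically via $x\mapsto\delta_x$ (with $\delta_*=0$), $\|\delta_x-\delta_y\|=d(x,y)$, and $\delta(M^+)$ closed in $Z$. Each $\varphi\in{\rm Isom}(M^+)$ extends to the linear isometry $\hat\varphi$ of $Z$ determined by $\hat\varphi(\delta_x)=\delta_{\varphi(x)}$, and I would check that $\varphi\mapsto\hat\varphi$ is a topologically faithful representation of $G$ with \textsc{sot}-closed image: continuity and injectivity are routine, and if $T$ is an \textsc{sot}-limit of maps $\hat\varphi_n$, then $T\delta_x=\lim_n\delta_{\varphi_n(x)}$ lies in the closed set $\delta(M^+)$, and applying this also to $T^{-1}$ shows that $T$ permutes $\delta(M^+)$, hence $T=\hat\psi$ for some $\psi\in{\rm Isom}(M^+)$. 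Since $-\Id\in{\rm Isom}(Z)$, this realizes $\{-1,1\}\times G$ as an \textsc{sot}-closed subgroup of ${\rm Isom}(Z)$ containing $-\Id$.

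It remains to produce an equivalent norm $\triple\cdot$ on $Z$ for which $\{-1,1\}\times G$ is the \emph{whole} isometry group. Here I would imitate the norm $\|\cdot\|_\Gamma$ of Section~3: define $\triple\cdot$ directly from $d$, using only ${\rm Isom}(M^+)$-invariant data such as the distances $\|\delta_x-\delta_y\|=d(x,y)$ and $\|\delta_x\|=1$, and insert coefficients whose denominators have opposite parity (the ``$1+2d$'' versus ``$2+2d$'' device) so that $\delta_x+\delta_y$ and $\delta_x-\delta_y$ can never be confused. Then every $\hat\varphi$ and $-\Id$ are automatically $\triple\cdot$-isometries, and the target is the converse: that any $\triple\cdot$-isometry $T$ must preserve the distinguished family $\{\delta_x:x\in M\}$ and respect the metric on it, so that $T=\pm\hat\varphi$ for a unique $\varphi\in{\rm Isom}(M^+)=G$; this gives ${\rm Isom}(Z,\triple\cdot)=\{-1,1\}\times G$ with $G$ the canonical complement of $\{\pm\Id\}$, i.e. a display of $\{-1,1\}\times G$ on $Z$.

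The crux is this last step. Unlike for the unit vector basis of $c_0$, the extreme points of a general Lipschitz-free space are not well understood --- not every molecule $\frac{\delta_x-\delta_y}{d(x,y)}$ need be extreme, and further extreme points may occur --- and $\{\delta_x\}_{x\in M}$ is an uncountable, non-basic spanning set, so one cannot argue coordinate by coordinate. I would therefore either use the freedom in the choice of $M$ to make $\mathcal F(M^+)$ sufficiently rigid, or, more in the spirit of the paper, build enough asymmetry into $\triple\cdot$ that the $\delta_x$ become geometrically recognisable among the norm-one vectors (say as the extreme points all of whose mutual $\triple\cdot$-distances are uniformly small, the apex furnishing the normalisation), and then recover $d$ from $\triple{\delta_x-\delta_y}$, exactly as $d_\Gamma$ was recovered from $\|e_n+e_m\|$ in Section~3. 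One also has to ensure that $\triple\cdot$ sees the metric of $M$, not merely its topology, which is what the parity bookkeeping is for.
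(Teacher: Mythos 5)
Your setup is sound and is essentially the one the paper uses: realise $G$ as ${\rm Isom}(M,d)$ for a separable complete metric space (via \cite{GK}), pass to the Arens--Eells/Lipschitz-free space, and observe that each isometry of $M$ induces a linear isometry and that $-\Id$ is also available; your verification that the image of $\{-1,1\}\times G$ is an \textsc{sot}-closed subgroup of ${\rm Isom}(Z)$ is fine. But there is a genuine gap exactly where you flag it: you never establish that \emph{every} surjective linear isometry of the (re)normed space has the form $\pm\hat\varphi$. The $c_0$-style argument does not transfer: the $\delta_x$ are not extreme points of the ball of $\mathcal F(M^+)$ (the natural candidates are the normalised molecules $(\delta_x-\delta_y)/d(x,y)$, and even these need not all be extreme for a general metric), the family $\{\delta_x\}_{x\in M}$ is typically uncountable and non-discrete, and there is no coordinate system in which to run the parity bookkeeping. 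So your final paragraph is a plan, not an argument, and the theorem is not proved.

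The missing idea in the paper is a change of \emph{metric} rather than a change of norm: replace $d$ by $d_3=\sqrt{d/(1+d)}$, which has the same isometry group but is a bounded \emph{concave} metric. For concave metrics, a rigidity theorem of Weaver (Theorem 2.7.2 of \cite{weaver}) states that every surjective linear isometry $T$ of the Arens--Eells space satisfies $T\big(\sum_i a_i m_{x_i,y_i}\big)=\frac{\sigma}{\lambda}\sum_i a_i m_{gx_i,gy_i}$ for some sign $\sigma=\pm1$ and surjective $\lambda$-dilation $g$; since $(Y,d_3)$ has finite positive diameter, necessarily $\lambda=1$ and $g\in{\rm Isom}(Y,d_3)$. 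Hence the Arens--Eells norm itself already has isometry group $\{-1,1\}\times{\rm Isom}(Y,d_3)\cong\{-1,1\}\times G$ (one needs $|Y|\geqslant 3$ so that $(\sigma,g)$ is determined by $T$), and no further renorming, adjoined base point, or parity device is required. This rigidity theorem is precisely the tool that would replace your missing last step; without concavity (or some comparable rigidity hypothesis on $M$) the claim that ${\rm Isom}(\mathcal F(M^+))$ is no larger than $\{-1,1\}\times G$ is simply not available.
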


\begin{proof}
Let $G$ be a Polish group. Then, by Theorem 3.1(i) of \cite{GK}, there is a separable complete metric space $(Y,d_1)$ such that 
$$
G\isom {\rm Isom}(Y,d_1),
$$
where the latter is equipped with the topology of pointwise convergence on $Y$.  Moreover, without loss of generality, we can suppose that $Y$ contains at least $3$ points.

Let now $d_2=\frac {d_1}{1+d_1}$. Then $d_2$ is a complete metric on $Y$ inducing the same topology and, moreover, since the function $t\mapsto \frac t{1+t}$ is strictly increasing from $[0,\infty[$ to $[0,1[$, 
$$
{\rm Isom}(Y,d_2)={\rm Isom}(Y,d_1).
$$

Again, let $d_3=\sqrt d_2$ and note that, by the same arguments as for $d_2$, $d_3$ is a complete metric on $Y$ with $d_3<1$ and 
$$
{\rm Isom}(Y,d_3)={\rm Isom}(Y,d_2).
$$
Furthermore, in the terminology of \cite{weaver}, $d_3$ is a {\em concave} metric on $Y$. 

Let now  $\AE(Y,d_3)$ denote the $\R$-vector space of all finitely supported functions $m\colon Y\til \R$ satisfying
$$
\sum_{y\in Y}m(y)=0,
$$
so formally $\AE(Y,d_3)$ can be identified with a hyperplane in the free $\R$-vector space over the basis $Y$. Elements of $\AE(Y,d_3)$ are called {\em molecules} and basic among these are the {\em atoms}, i.e., the molecules of the form
$$
m_{x,y}=\1_x-\1_y,
$$
where $x,y\in Y$ and $\1_x$ is $1$ on $x$ and $0$ elsewhere and similarly for $\1_y$. As can easily be seen by induction on the cardinality of its support, any molecule $m$ can be written as
$$
m=\sum_{i=1}^na_im_{x_i,y_i},
$$
for some $x_i,y_i\in Y$ and $a_i\in \R$.

We equip $\AE(Y,d_3)$ with the norm ${\norm\cdot}_{\AE}$, defined by
$$
{\norm {m}}_{\AE}=\min\big( \sum_{i=1}^n |a_i| d_3(x_i,y_i) \del m=\sum_{i=1}^na_im_{x_i,y_i}\big),
$$
and remark that the norm is equivalently computed by
$$
\norm m_{\AE}=\sup\big( \sum_{y\in Y}m(y)f(y)\del f\colon (Y,d_3)\til \R \text{ is $1$-Lipschitz }\big).
$$
Abusing notation, the completion of $\AE(Y,d_3)$ with respect to $\norm\cdot_{\AE}$ will also be denoted by $\AE(Y,d_3)$. 
It is not difficult to verify that the set of molecules that are rational linear combinations of atoms with support in a countable dense subset of $Y$ is dense in $\AE(Y,d_3)$ and thus  $\AE(Y,d_3)$ is a separable Banach space.

The space $\AE(Y,d_3)$ is called the {\em Arens--Eells} space of $Y$ and a fuller account of its properties and uses can be found in N. Weaver's book \cite{weaver}. Of particular importance to us is the following result relying on the concavity of $(Y,d_3)$, here stated slightly differently from Theorem 2.7.2 in \cite{weaver}. Namely, for any surjective linear isometry 
$$
T\colon \AE(Y,d_3)\til \AE(Y,d_3)
$$
there are some $\sigma=\pm 1$, $\lambda>0$ and a bijective $\lambda$-dilation $g\colon (Y,d_3)\til (Y,d_3)$, i.e., $d_3(gx,gy)=\lambda d_3(x,y)$ for all $x,y\in Y$, such that for any molecule $\sum_{i=1}^na_im_{x_i,y_i}$,
$$
T\big(\sum_{i=1}^na_im_{x_i,y_i}\big)= \frac\sigma\lambda\sum_{i=1}^na_im_{gx_i,gy_i}.
$$
However, as $(Y,d_3)$ has finite diameter $>0$, there are no surjective $\lambda$-dilations from $Y$ to $Y$ for $\lambda\neq 1$, from which it follows that  $g\in {\rm Isom}(Y,d_3)$. 

We claim that since $|Y|\geqslant 3$, $\sigma=\pm 1$ and $g\in {\rm Isom}(Y,d_3)$ are uniquely determined as a function of $T$. To see this, note that since
$$
T(m_{x,y})=\sigma m_{gx,gy},
$$
we have $\{gx,gy\}=\{x,y\}$ for all $x,y\in Y$. So the action of $g$ on doubletons is uniquely given, whence, as $|T|\geqslant 3$, the action of $g$ on $Y$ is also completely determined by $T$. Moreover, once $g$ is given,  $T(m_{x,y})=\sigma m_{gx,gy}$ also describes $\sigma$ in terms of $T$.

Thus, $T\mapsto (\sigma,g)$ is a function from ${\rm Isom}(\AE(Y,d_3))$ to $\{-1,1\}\times {\rm Isom}(Y,d_3)$, which is easily seen to be injective, and considering 
$$
T(m_{x,y})=\sigma m_{gx,gy},
$$
one finds that it is a group homomorphism. Conversely, as any $(\sigma,g)\in \{-1,1\}\times {\rm Isom}(Y,d_3)$ defines a linear isometry $T$ of $\AE(Y,d_3)$ by the above, the map $T\mapsto (\sigma,g)$ is an isomorphism of ${\rm Isom}(\AE(Y,d_3))$ with $\{-1,1\}\times {\rm Isom}(Y,d_3)$.
Being a definable isomorphism between Polish groups, it must be a topological isomorphism and, similarly, ${\rm Isom}(Y,d_3)$ is topologically isomorphic with $G$. It follows that $\{-1,1\}\times G$ can be displayed as ${\rm Isom}(\AE(Y,d_3))$.
\end{proof}

The above construction coupled with a well-known result of V.V. Uspenskij \cite{uspenskii} immediately gives us the existence of a separable Banach space whose isometry group is universal for all Polish groups. Namely, let $\U$ denote the so-called {\em Urysohn} metric space, which is the unique (up to isometry) separable complete metric space such that any isometry between finite subsets extends to a full isometry of $\U$. Then, as shown in \cite{uspenskii}, any Polish group $G$ embeds as a closed subgroup into ${\rm Isom}(\U)$.
 
\begin{thm} 
Any Polish group $G$ is topologically isomorphic to a closed subgroup of ${\rm Isom}(\AE(\U))$, where $\U$ is the Urysohn space.
\end{thm}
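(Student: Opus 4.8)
The plan is to combine V.\,V. Uspenskij's universality theorem with a closed-embedding statement for the Arens--Eells functor. By \cite{uspenskii}, $G$ is topologically isomorphic to a closed subgroup of ${\rm Isom}(\U)$, so it suffices to produce a topological group embedding $\iota\colon {\rm Isom}(\U)\inj {\rm Isom}(\AE(\U))$ with \textsc{sot}-closed image: a closed subgroup of a closed subgroup is a closed subgroup, and topological embeddings compose.

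The map $\iota$ is the obvious one: to $g\in {\rm Isom}(\U)$ associate the surjective linear isometry $T_g$ of $\AE(\U)$ determined on atoms by $T_g(m_{x,y})=m_{g(x),g(y)}$ and extended by linearity and continuity (this is an isometry since $\norm{m_{x,y}}_\AE=d_\U(x,y)$). That $\iota$ is an injective homomorphism is immediate, and it is continuous: if $g_n\til g$ pointwise on $\U$, then $\norm{T_{g_n}m_{x,y}-T_gm_{x,y}}_\AE\leqslant d_\U(g_nx,gx)+d_\U(g_ny,gy)\til 0$, and since atoms span a dense subspace and the $T_{g_n}$ are uniformly bounded, $T_{g_n}\til T_g$ in \textsc{sot}. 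Once the image is known to be closed it is a Polish group, so $\iota$ is a continuous bijective homomorphism onto it and hence a topological isomorphism by the open mapping theorem for Polish groups. Thus everything reduces to showing that $\iota({\rm Isom}(\U))$ is \textsc{sot}-closed in ${\rm Isom}(\AE(\U))$.

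This is where the work lies. One cannot simply quote Weaver's classification of isometries of Arens--Eells spaces as in the proof of Theorem \ref{universal}, because the Urysohn metric is not concave in Weaver's sense (it realizes collinear triples $d(x,z)=d(x,y)+d(y,z)$). Instead I would argue directly. First, the set $A=\{m_{x,y}\del x,y\in \U\}$ of atoms is a \emph{closed} subset of $\AE(\U)$: if $m_{x_n,y_n}\til v$ then, passing to a subsequence, $d_\U(x_n,y_n)\til\norm v_\AE=:r$, and when $r>0$ one checks that $(x_n)$ is Cauchy by pairing the molecules $m_{x_n,y_n}-m_{x_m,y_m}$ with small $1$-Lipschitz ``bump'' functions concentrated near $x_n$; completeness of $\U$ then gives $x_n\til x$, and symmetrically $y_n\til y$, so $v=m_{x,y}$ and moreover the endpoints of $m_{x_n,y_n}$ converge to those of the limit. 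Consequently the set $\mathcal A$ of $T\in {\rm Isom}(\AE(\U))$ with $T(m_{x,y})\in A$ for all $x,y$ is \textsc{sot}-closed, being an intersection of preimages of $A$ under the continuous maps $T\mapsto Tm_{x,y}$. A short computation with the relations $m_{x,y}+m_{y,z}=m_{x,z}$ and $m_{x,y}=-m_{y,x}$ shows that every $T\in\mathcal A$ equals $T_g$ or $-T_g$ for a unique $g\in {\rm Isom}(\U)$, so $\mathcal A=\{T_g\del g\in {\rm Isom}(\U)\}\cup\{-T_g\del g\in {\rm Isom}(\U)\}$. Finally, no sequence $T_{g_n}$ can converge in \textsc{sot} to some $-T_h$: the endpoint-tracking form of the closedness of $A$ would force $g_nx\til h(y)$ whenever $x\neq y$, which, on varying $y$, is absurd as soon as $|\U|\geqslant 3$. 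Hence $\iota({\rm Isom}(\U))$ is relatively closed in $\mathcal A$, thus closed in ${\rm Isom}(\AE(\U))$, and the proof is complete.

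The delicate point — and the one I expect to demand the most care — is the closedness of the atom set $A$ together with its quantitative, endpoint-tracking refinement, since this is exactly where completeness of $\U$ enters and where the failure of concavity blocks a direct appeal to Weaver's structure theorem; the remaining steps are bookkeeping with molecules and a standard invocation of the open mapping theorem.
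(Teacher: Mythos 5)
Your proof is correct, and while it uses the same embedding $g\mapsto T_g$ as the paper, it establishes closedness of the image by a genuinely different (and more self-contained) argument. The paper's proof is a two-liner: it writes down $T_g$ and asserts that the image of ${\rm Isom}(\U)$, ``being Polish, will automatically be closed'' --- i.e., it invokes the standard fact that a Polish subgroup of a Polish group is closed, which presupposes that $g\mapsto T_g$ is a homeomorphism onto its image; the continuity of the inverse is left entirely implicit there, and verifying it requires essentially your endpoint-tracking lemma. You instead prove closedness of $\iota({\rm Isom}(\U))$ directly, via the closedness of the atom set $A\subseteq \AE(\U)$ together with convergence of endpoints (your bump-function estimate $\norm{m_{x,y}-m_{x',y'}}_{\AE}\geqslant \eps$ whenever $d(x,y)\geqslant\eps$ and $d(x,x')\geqslant \eps$ is exactly what is needed, and completeness of $\U$ then does the rest), and you recover the topological embedding afterwards from the open mapping theorem for Polish groups. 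So the two arguments run the implication ``embedding $\Leftrightarrow$ closed image'' in opposite directions, and yours supplies the detail the paper glosses over; you are also right that Weaver's concavity-based structure theorem, which drives the proof of the universality theorem for $\AE(Y,d_3)$, is unavailable for the Urysohn metric itself. Two small remarks: the detour through the classification $\mathcal A=\{\pm T_g\}$ is avoidable, since endpoint-tracking applied to $m_{g_nx,g_ny}\til S(m_{x,y})$ already produces a pointwise limit $h=\lim_n g_n$ with $S=T_h$ on atoms (and $h$ is surjective because $S^{-1}=\lim_n T_{g_n^{-1}}$ by continuity of inversion), so the sign $-T_h$ never arises; and the ``short computation'' identifying $\mathcal A$ from the cocycle relations $m_{x,y}+m_{y,z}=m_{x,z}$ is a little longer than advertised, as one must run a case analysis on supports and check consistency of the induced point map across triples --- but neither point affects correctness.
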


\begin{proof}
It suffices to show that ${\rm Isom}(\U)$ embeds into ${\rm Isom}(\AE(\U))$, since, being Polish, the image of ${\rm Isom}(\U)$ will automatically be closed.

So let $T\colon {\rm Isom}(\U)\til {\rm Isom}(\AE(\U))$ be defined by 
$$
T_g\big(\sum_{i=1}^na_im_{x_i,y_i}\big)= \sum_{i=1}^na_im_{gx_i,gy_i}
$$
for any molecule $\sum_{i=1}^na_im_{x_i,y_i}$, $g\in {\rm Isom}(\U)$, extending $T_g$ continuously to the completion $\AE(\U)$.
\end{proof}


\section{Transitivity of norms and LUR renormings}

In this section, we relate the question of diplayability of groups on Banach spaces  to the classical theory of transitivity of norms. A norm on a space $X$ is said to be {\em transitive} if for any $x,y$ in the unit sphere of $X$, there exists a surjective isometry $T$ on $X$ such that $T(x)=y$. Whether any separable Banach space with a transitive norm must be isometric to $\ell_2$ is a longstanding open problem known as the  {\em Mazur rotation problem}.

We begin by recalling weaker notions of transitivity introduced by A. Pe\l czy\'nski and S. Rolewicz at the International Mathematical Congress in Stockholm in 1964 (see also \cite{R}). After this, we shall clarify the relations between LUR renormings and  transitivity of norms. Related results may be found in some earlier work of F. Cabello-Sanchez \cite{CS} and of J. Becerra Guerrero and A. Rodr\'iguez-Palacios \cite{BR}.

\begin{defi} Let $(X,{\norm{\cdot}})$ be a Banach space and for any $x \in  S_X$, let ${\mathcal O}(x)$ denote the orbit of $x$ under the action of ${\rm Isom}(X,{\norm{\cdot}})$. The norm ${\norm{\cdot}}$ on $X$ is
\begin{itemize}
\item[(i)] {\em transitive} if for any $x \in S_X$, ${\mathcal O}(x)=S_X$,
\item[(ii)] {\em almost transitive} if for any $x \in S_X$, ${\mathcal O}(x)$ is dense in $S_X$,
\item[(iii)] {\em convex transitive} if for any $x \in  S_X$, ${\rm conv}{\mathcal O}(x)$ is dense in $B_X$,
\item[(iv)] {\em uniquely maximal} if whenever ${\triple{\cdot}}$ is an equivalent norm on $X$ such that
${\rm Isom}(X,{\norm{\cdot}}) \subseteq {\rm Isom}(X,{\triple{\cdot}})$, then ${\triple{\cdot}}$ is a multiple of ${\norm{\cdot}}$.
\item[(v)] {\em maximal} if whenever ${\triple{\cdot}}$ is an equivalent norm on $X$ such that
${\rm Isom}(X,{\norm{\cdot}}) \subseteq {\rm Isom}(X,{\triple{\cdot}})$, then ${\rm Isom}(X,{\norm{\cdot}})={\rm Isom}(X,{\triple{\cdot}})$.
\end{itemize}
\end{defi}

The implications transitive $\Rightarrow$ almost transitive $\Rightarrow$ convex transitive, as well as uniquely maximal $\Rightarrow$ maximal  are obvious, and  convex transitivity and unique maximality are equivalent by a result of E. Cowie \cite{C}.

Given a Banach space $X$ with norm ${\norm{\cdot}}$, we shall say that an equivalent norm ${\triple{\cdot}}$ on $X$ {\em does not diminish the group of isometries} when 
$${\rm Isom}(X,{\norm{\cdot}}) \subseteq {\rm Isom}(X,{\triple{\cdot}}).
$$

G. Lancien \cite{L} has proved that if a separable Banach space $X$ has the Radon-Nikodym Property, then there exists an equivalent LUR norm on $X$, which does not diminish the group of isometries; and that if $X^*$ is separable, then there exists an equivalent norm on $X$ whose dual norm is LUR and which does not diminish the group of isometries. In \cite{FR} it is observed that if $X$ satisfies the two properties, then one can find a norm not diminishing the group of isometries, which is both LUR and with LUR dual norm. 

Note that it is a classical result of renorming theory, due to M. Kadec, see \cite{DGZ}, that any separable space admits an equivalent LUR norm, but of course, this renorming may alter the group of isometries, and we shall actually see that the results of Lancien do not extend to all separable spaces.

\begin{lemme}\label{atn} 
If an almost transitive norm on a Banach space is LUR in some point of the unit sphere, then it is uniformly convex. 
\end{lemme}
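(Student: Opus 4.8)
The statement says: an almost transitive norm that is LUR at one point of the sphere must be uniformly convex. The key mechanism I would exploit is that almost transitivity lets us transport the LUR property from the one good point to a dense set of points of the sphere, and that uniform convexity is precisely the ``uniform over the whole sphere'' version of the modulus estimate we have at a single point. So the proof naturally splits into: (1) upgrade LUR-at-$x_0$ to LUR-at-every-point (via isometries and density), and (2) upgrade ``LUR at every point'' plus ``almost transitive'' to ``uniformly convex'' by extracting a single modulus that works uniformly.

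First I would fix $x_0\in S_X$ where the norm is LUR, with LUR modulus $\delta\mapsto\eta(x_0,\eps)$ as recalled after Theorem \ref{FG}. If $T\in{\rm Isom}(X)$, then the norm is LUR at $Tx_0$ with exactly the same modulus, since $\|Tx_0\pm Ty\|=\|x_0\pm y\|$. Hence the norm is LUR, with one fixed modulus $\eta(\eps)$, at every point of the orbit $\mathcal O(x_0)$, which by almost transitivity is dense in $S_X$. The point of having a \emph{uniform} modulus on a dense set is that the LUR inequality is a closed condition: if $x\in S_X$ is a limit of $x_n\in\mathcal O(x_0)$ and $y\in S_X$ satisfies $\|x-y\|\geqslant\eps$, then for large $n$ we have $\|x_n-y\|\geqslant\eps/2$, so $\|x_n+y\|\leqslant 2-\eta(\eps/2)$, and passing to the limit $\|x+y\|\leqslant 2-\eta(\eps/2)$. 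Thus for \emph{every} pair $x,y\in S_X$ with $\|x-y\|\geqslant\eps$ we get $\|x+y\|\leqslant 2-\eta(\eps/2)$, with $\eta(\eps/2)>0$ independent of $x$ and $y$. That is exactly the definition of uniform convexity, with modulus of convexity $\delta(\eps)\geqslant\tfrac12\eta(\eps/2)>0$.

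One technical point to handle with a little care is the reduction from vectors of norm $\leqslant 1$ (which appears in some formulations of uniform convexity) to unit vectors: if $\|x\|,\|y\|\leqslant 1$, $\|x-y\|\geqslant\eps$, and say $\|x\|\leqslant\|y\|$, then either $\|x\|\leqslant 1-\eps/4$, in which case $\|\tfrac{x+y}2\|\leqslant\tfrac12(\|x\|+\|y\|)\leqslant 1-\eps/8$ trivially, or both norms are close to $1$, in which case normalizing $x$ and $y$ changes them by at most $O(\eps)$ and we reduce to the unit-sphere case just treated. This is routine and I would only sketch it.

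\textbf{Main obstacle.} There is no deep obstacle; the content is the interchange of a density/limit argument with the LUR inequality, and the only thing one must be scrupulous about is that the modulus $\eta$ obtained from LUR-at-$x_0$ genuinely does not depend on the point once we move along the orbit — this is immediate because isometries preserve the quantities $\|x-y\|$ and $\|x+y\|$ verbatim — and that one does not lose positivity of the modulus when passing to the limit, which is why the $\eps\mapsto\eps/2$ slack is inserted before taking limits. So the ``hard part'' is really just bookkeeping with $\eps$'s; the conceptual step is recognizing that almost transitivity is exactly what converts a pointwise convexity modulus into a global one.
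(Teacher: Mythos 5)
Your proof is correct and follows essentially the same route as the paper's: transport the LUR modulus at $x_0$ along isometries to the dense orbit $\mathcal O(x_0)$ and then handle an arbitrary point of the sphere by approximation, losing only a harmless factor in $\epsilon$. The paper phrases the last step as a direct triangle-inequality estimate with a point $gx_0$ close to $x$ rather than as a passage to the limit, but the content is identical.
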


\begin{proof} Fix $x_0 \in S_X$ in which the norm is LUR. For any $\epsilon>0$, let $\delta>0$ such that
$$\|x-x_0\| \geqslant \epsilon/2 \Rightarrow \|x+x_0\| \leqslant 2-\delta.$$
Let $x$ be arbitrary in $S_X$, and let $g \in G$ be such that $\|x-gx_0\| \leqslant \max(\epsilon/2,\delta/2)$.
For any $y$ in $S_X$ such that $\|x-y\| \geqslant \epsilon$, we deduce $\|y-gx_0\| \geqslant \epsilon/2$ and therefore, since $g$ is an isometry,
$$\|y+gx_0\| \leqslant 2-\delta,$$
whereby
$$\|y+x\| \leqslant \|y+gx_0\|+\|gx_0-x\| \leqslant 2-\delta/2.$$
This proves that the norm is uniformly convex. \end{proof}

\begin{lemme}\label{luruc} 
If a convex transitive norm on a Banach space is LUR on a dense subset of the unit sphere, then it is almost transitive and uniformly convex.
\end{lemme}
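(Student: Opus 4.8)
The plan is to first upgrade the hypothesis to almost transitivity and then quote Lemma~\ref{atn}. Fix a point $x_0\in S_X$ at which the norm is LUR; such a point exists since the set of LUR points contains a dense subset of $S_X$, hence is dense. The key claim is that already the \emph{single} orbit ${\mathcal O}(x_0)$ (under $G={\rm Isom}(X,\norm\cdot)$) is dense in $S_X$.

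To prove the claim, I would fix $y\in S_X$ and $\epsilon>0$ and choose by Hahn--Banach a norming functional $y^*\in S_{X^*}$ with $y^*(y)=1$. Since the norm is LUR at $x_0$, there is $\delta>0$ such that $\norm{x_0-w}\geqslant\epsilon$ implies $\norm{x_0+w}\leqslant 2-\delta$ for all $w\in S_X$; as every $T\in G$ is an isometry, the very same $\delta$ witnesses the LUR property at each point $Tx_0$. By the functional reformulation of convex transitivity in Lemma~\ref{conv trans}, $\sup_{T\in G}y^*(Tx_0)=1$, so we may pick $T\in G$ with $y^*(Tx_0)>1-\delta$. Then
$$
\norm{Tx_0+y}\geqslant y^*(Tx_0)+y^*(y)>2-\delta,
$$
and the contrapositive of the LUR estimate at $Tx_0$ forces $\norm{Tx_0-y}<\epsilon$. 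Since $y$ and $\epsilon$ were arbitrary, $\overline{{\mathcal O}(x_0)}=S_X$.

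Next I would deduce that \emph{every} orbit is dense, which is exactly almost transitivity. Given $y\in S_X$ and a target $z\in S_X$ with $\epsilon>0$, use $\overline{{\mathcal O}(x_0)}=S_X$ to find $g,h\in G$ with $\norm{gx_0-y}<\epsilon/2$ and $\norm{hx_0-z}<\epsilon/2$. Then $hg\inv\in G$ sends $gx_0$ to $hx_0$, and since $hg\inv$ is an isometry,
$$
\norm{hg\inv y-z}\leqslant\norm{hg\inv y-hx_0}+\norm{hx_0-z}=\norm{y-gx_0}+\norm{hx_0-z}<\epsilon.
$$
Hence $\overline{{\mathcal O}(y)}=S_X$ for every $y\in S_X$, i.e.\ the norm is almost transitive. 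Finally, the norm is in particular LUR at the single point $x_0$, so Lemma~\ref{atn} applies and gives that the norm is uniformly convex, completing the proof.

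I do not anticipate a serious obstacle here: the only genuine idea is to combine the norming functional of $y$ with the functional form of convex transitivity (Lemma~\ref{conv trans}) so as to pull $y$ into the closure of the orbit of a single LUR point, after which uniform convexity is a black-box consequence of Lemma~\ref{atn}. The one subtlety worth flagging is that a single LUR point already suffices for this scheme, since the LUR modulus at $x_0$ transfers verbatim to all isometric images $Tx_0$ and the homogeneity argument then spreads density to every orbit; the full strength ``LUR on a dense subset'' of the hypothesis is used only to ensure that at least one LUR point is available.
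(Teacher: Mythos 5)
Your proof is correct, and it runs the key estimate in the opposite direction from the paper. The paper argues by contraposition: if some orbit $Gx_0$ stays at distance $\geqslant\epsilon$ from a point $x$, it uses the density of LUR points to assume the norm is LUR \emph{at the target} $x$, takes a norming functional $\phi$ at $x$, and concludes $\phi(gx_0)\leqslant 1-\alpha$ for all $g$, so $x\notin\overline{\rm conv}\,Gx_0$, contradicting convex transitivity directly; no orbit-spreading step is needed because $x_0$ was arbitrary. You instead place the LUR property \emph{at the source} $x_0$ (transported to each $Tx_0$ with the same modulus, which is legitimate since surjective isometries preserve the sphere), take the norming functional at the arbitrary target $y$, and use Lemma~\ref{conv trans} to push $y^*(Tx_0)$ above $1-\delta$, forcing $\norm{Tx_0-y}<\epsilon$. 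The mechanism --- norming functional plus LUR modulus plus the functional form of convex transitivity --- is the same, but your arrangement buys a formally stronger statement: a single LUR point on the sphere already upgrades convex transitivity to almost transitivity (and then, via Lemma~\ref{atn}, to uniform convexity), whereas the paper's arrangement genuinely uses density of the LUR points to find one near the putative bad target. The price is the extra (routine) homogeneity step showing that density of one orbit implies density of all orbits, which you carry out correctly. This one-point refinement is consistent with the results of Cabello-S\'anchez and Becerra Guerrero--Rodr\'iguez-Palacios cited in this section.
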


\begin{proof} 
By Lemma \ref{atn}, it suffices to prove that the norm is almost transitive. Let $X$ be a Banach space and set $G={\rm Isom}(X)$. Assume that the norm is LUR on a dense subset of $S_X$ and not almost transitive, and let $x_0,x \in S_X$ and $\epsilon>0$ be such that the orbit $Gx_0$ is at distance at least $\epsilon$ from $x$. By density we may assume that the norm is LUR in $x$, so let $\alpha>0$ be such that $\|x+y\| \leqslant 2-\alpha$ whenever $y \in S_X$ is such that $\|x-y\| \geqslant \epsilon$. Let $\phi$ be a normalized functional such that $\phi(x)=1$. Then whenever $g \in G$,
$$\|x-gx_0\| \geqslant \epsilon,$$
and therefore
$$\phi(x+gx_0) \leqslant \|x+gx_0\| \leqslant 2-\alpha,$$
whereby 
$$\phi(gx_0) \leqslant 1-\alpha.$$
It follows that for any $y$ in the closed convex hull of $Gx_0$, $\phi(y) \leqslant 1-\alpha$, which proves that $x$ does not belong to the closed convex hull of $Gx_0$. Therefore the norm is not convex transitive.
\end{proof}

\begin{quest} 
Let $X$ be a separable real Banach space and $G$ be a countable bounded subgroup of $GL(X)$ containing $-\Id$ and admitting a distinguished point. Does there exist an equivalent norm on $X$ for which $G$ is the group of isometries on $X$?
\end{quest}

That is,  for $X$ real separable, we ask whether any representation of a countable group $G$ by a bounded subgroup of $GL(X)$ containing $-\Id$ and admitting a distinguished point is always a display. Or in other words, we ask  whether it is possible to remove the Radon-Nikodym Property condition from Theorem \ref{FG}. This would follow if we could remove the RNP condition from Lancien's result, that is if every separable space could be renormed with an LUR norm without diminishing the group of isometries. This, however is false, as proved by the following examples:

\begin{ex} 
The space $C([0,1]^2)$ cannot be renormed with an equivalent LUR norm without diminishing the group of isometries.
\end{ex}

\begin{proof} 
It is well-known that if a norm on a Banach space $X$ is LUR, then the strong and the weak operator topology must coincide on ${\rm Isom}(X)$.
Indeed let $(T_n)$ be a sequence of isometries, not \textsc{sot}-converging to some isometry $T$, and fix $x \in B_X$ such that $T_nx$ does not converge in norm to $Tx$. We may assume that $\norm{Tx-T_nx} \geqslant \epsilon$ for some fixed $\epsilon$, so by the LUR property in $Tx$, $\norm{Tx+T_nx} \leqslant 2-\alpha$ for some fixed $\alpha>0$. Let then $x^*$ be some norm one functional such that $x^*(Tx)=1$.  Then
$$x^*(T_nx)=x^*(T_nx+Tx-Tx)\leqslant 1-\alpha=(1-\alpha) x^*(Tx),$$
and therefore $T_nx$ cannot converge weakly to $Tx$, and $T_n$ does not converge to $T$ in the weak operator topology. 

Now it was proved by Megrelishvili \cite{M} that the weak and the strong operator topology do not coincide on $G={\rm Isom}(C([0,1]^2))$. If $C([0,1]^2)$ could be renormed with an equivalent LUR norm, such that the group of isometries $G'$ in the new norm contained $G$, then by the above the weak and strong operator topology would coincide on $G'$ and therefore on the subgroup $G$, a contradiction.
\end{proof}

Not even a weak version of a theorem about LUR renormings not diminishing the group of isometries can be hoped for:

\begin{ex} Any equivalent renorming ${\norm{\cdot}}$ of the space $L_1([0,1])$, which does not diminish the group of isometries, is nowhere LUR. 
\end{ex}

\begin{proof} By \cite{FJ} Theorem 12.4.3, the norm on $L_1([0,1])$ is almost transitive, and therefore uniquely maximal. So any renorming which does not diminish the group of isometries must be a multiple of the original norm. On the other hand, Lemma \ref{atn} tells us that the norm of $L_1([0,1])$ is nowhere LUR.
\end{proof}

\section{Open questions and comments}

\subsection{Displays and distinguished points}

In general, the group of isometries on a separable Banach space need not be associated to a distinguished point or even a distinguished sequence. However, this question apparently remains open for countable groups.

\begin{quest} 
Let $X$ be a Banach space. Let $G$ be a countable group of isomorphisms on $X$ which is the group of isometries on $X$ in some equivalent norm. Must $X$ contain a point distinguished by $G$? Must $X$ contain a sequence distinguished by $G$? 
\end{quest}
 
In the separable case, a partial answer is that the countability of $G$ implies that there must be some $n$-uplet $x_1,\ldots,x_n$ of points and some $\alpha>0$ such that whenever $g \neq \Id$, $d(gx_i,x_i) \geqslant \alpha$ for some $i=1,\ldots,n$.

What we already observed  is that
the condition that there exist a distinguished point for a group $G$ implies that $G$ is discrete for SOT. When $X$ is separable, this also implies that $G$ is countable. Observe that in the metric space context, there does not need to exist a distinguished point, by an example  due to G. Godefroy:

\begin{ex}[G. Godefroy] There exists a complete separable metric space $M$ on which the group of isometries is countable and no point is distinguished by ${\rm Isom}(M)$.
\end{ex}

\begin{proof} Consider the union $M$ in $\R^2$ of the real line $\R \times \{0\}$ and of the segments  $\{q\}\times[0,1]$, for $q \in \Q$, with the metric defined by
$$d((x,y),(x',y'))=|y|+|x-x'|+|y'|.$$
It is clear that any isometry on $M$ is uniquely associated to an isometry on $\R$ which is either  a rational translation  or a symmetry with respect to a rational point, therefore ${\rm Isom}(M)$ is countable. 

Any point $m$ of $M$ is either a point of $\R \times \{0\}$, and then $\inf_{T \in {\rm Isom}(M) \setminus \{\Id\}}d(T(m),m)=0$, or a point of some segment $\{q\}\times[0,1]$, in which case the isometry associated to the symmetry with respect to $q$ will leave $m$ invariant. Therefore there is no distinguished point for ${\rm Isom}(M)$.  
\end{proof}

Observe however that there exists a sequence of two points which will be distinguished by ${\rm Isom}(M)$: any non-trivial "translation" isometry will send the point $(0,1)$ to a point at distance at least $2$, as well as any "symmetry" isometry with respect to a rational point which is not $0$. Finally to distinguish the symmetry isometry with respect to $0$, which fixes $(0,1)$, we may use the point $(1,0)$, which will also be sent to a point at distance $2$.

\subsection{Displays and universality}

Finally, we note that it remains unknown whether Theorem \ref{universal} admits one of the following generalisations.

\begin{quest} 
Let $G$ be a Polish group with a non-trivial central involution.  Is $G$ displayable on some  separable Banach space?
\end{quest}

\begin{quest} 
Does there exist a separable Banach space $X$ such that $\{-1,1\} \times G$ is displayable on $X$ for all Polish groups $G$?
\end{quest}


\end{document}